\newcommand{\nc}{\newcommand}
\nc{\fg}{\mathfrak{f} } \nc{\vg}{\mathfrak{v} } \nc{\wg}{\mathfrak{w} }
\nc{\zg}{\mathfrak{z} } \nc{\ngo}{\mathfrak{n} } \nc{\kg}{\mathfrak{k} }
\nc{\mg}{\mathfrak{m} } \nc{\bg}{\mathfrak{b} } \nc{\ggo}{\mathfrak{g} } \nc{\eg}{\mathfrak{e} }
\nc{\ggob}{\overline{\mathfrak{g}} } \nc{\sog}{\mathfrak{so} }
\nc{\sug}{\mathfrak{su} } \nc{\spg}{\mathfrak{sp} } \nc{\slg}{\mathfrak{sl} }
\nc{\glg}{\mathfrak{gl} } \nc{\cg}{\mathfrak{c} } \nc{\rg}{\mathfrak{r} }
\nc{\hg}{\mathfrak{h} } \nc{\tg}{\mathfrak{t} } \nc{\ug}{\mathfrak{u} }
\nc{\dg}{\mathfrak{d} } \nc{\ag}{\mathfrak{a} } \nc{\pg}{\mathfrak{p} }
\nc{\sg}{\mathfrak{s} } \nc{\affg}{\mathfrak{aff} } \nc{\qg}{\mathfrak{q} } \nc{\lgo}{\mathfrak{l} }
\nc{\pca}{\mathcal{P}} \nc{\nca}{\mathcal{N}} \nc{\lca}{\mathcal{L}}
\nc{\oca}{\mathcal{O}} \nc{\mca}{\mathcal{M}} \nc{\tca}{\mathcal{T}}
\nc{\aca}{\mathcal{A}} \nc{\cca}{\mathcal{C}} \nc{\gca}{\mathcal{G}}
\nc{\sca}{\mathcal{S}} \nc{\hca}{\mathcal{H}} \nc{\bca}{\mathcal{B}}
\nc{\dca}{\mathcal{D}} \nc{\eca}{\mathcal{E}} \nc{\wca}{\mathcal{W}} \nc{\ica}{\mathcal{I}}
\nc{\vp}{\varphi} \nc{\ddt}{\tfrac{d}{dt}} \nc{\dsdt}{\tfrac{d^2}{dt^2}} \nc{\dds}{\frac{d}{ds}}
\nc{\dpar}{\frac{\partial}{\partial t}} \nc{\im}{\mathrm{i}}
\nc{\SO}{\mathrm{SO}} \nc{\Spe}{\mathrm{Sp}} \nc{\Sl}{\mathrm{SL}}
\nc{\SU}{\mathrm{SU}} \nc{\Or}{\mathrm{O}} \nc{\U}{\mathrm{U}} \nc{\Gl}{\mathrm{GL}}
\nc{\Se}{\mathrm{S}} \nc{\Cl}{\mathrm{Cl}} \nc{\Spin}{\mathrm{Spin}}
\nc{\Pin}{\mathrm{Pin}} \nc{\G}{\mathrm{GL}_n(\RR)} \nc{\g}{\mathfrak{gl}_n(\RR)}
\nc{\RR}{{\Bbb R}} \nc{\HH}{{\Bbb H}} \nc{\CC}{{\Bbb C}} \nc{\ZZ}{{\Bbb Z}}
\nc{\FF}{{\Bbb F}} \nc{\NN}{{\Bbb N}} \nc{\QQ}{{\Bbb Q}} \nc{\PP}{{\Bbb P}} \nc{\OO}{{\Bbb O}}
\nc{\vs}{\vspace{.2cm}} \nc{\vsp}{\vspace{1cm}} \nc{\ip}{\langle\cdot,\cdot\rangle}
\nc{\ipp}{(\cdot,\cdot)} \nc{\la}{\langle} \nc{\ra}{\rangle} \nc{\unm}{\tfrac{1}{2}}
\nc{\unc}{\tfrac{1}{4}} \nc{\und}{\frac{1}{16}} \nc{\no}{\vs\noindent}
\nc{\lam}{\Lambda^2(\RR^n)^*\otimes\RR^n} \nc{\tangz}{{\rm T}^{\rm Zar}}
\nc{\nor}{{\sf n}}  \nc{\mum}{/\!\!/} \nc{\kir}{/\!\!/\!\!/}
\nc{\Ri}{\tfrac{4\Ric_{\mu}}{||\mu||^2}} \nc{\ds}{\displaystyle}
\nc{\ben}{\begin{enumerate}} \nc{\een}{\end{enumerate}} \nc{\f}{\frac}
\nc{\lb}{[\cdot,\cdot]} \nc{\isn}{\tfrac{1}{||v||^2}}
\nc{\gkp}{(\ggo=\kg\oplus\pg,\ip)} \nc{\ukh}{(\ug=\kg\oplus\hg,\ip)}
\nc{\tgkp}{(\tilde{\ggo}=\kg\oplus\pg,\ip)}
\nc{\wt}{\widetilde}
\nc{\iop}{\mathtt{i}} \nc{\jop}{\mathtt{j}} 
\nc{\Hk}{H_{\kil}} \nc{\gk}{g_{\kil}}
\nc{\Hess}{\operatorname{Hess}} \nc{\ad}{\operatorname{ad}}
\nc{\Ad}{\operatorname{Ad}} \nc{\rank}{\operatorname{rk}}
\nc{\Irr}{\operatorname{Irr}} \nc{\End}{\operatorname{End}}
\nc{\Aut}{\operatorname{Aut}} \nc{\Inn}{\operatorname{Inn}}
\nc{\Der}{\operatorname{Der}} \nc{\Ker}{\operatorname{Ker}}
\nc{\Iso}{\operatorname{Iso}} \nc{\Diff}{\operatorname{Diff}}
\nc{\Lie}{\operatorname{L}} \nc{\tr}{\operatorname{tr}} \nc{\dif}{\operatorname{d}}
\nc{\sen}{\operatorname{sen}} \nc{\modu}{\operatorname{mod}}
\nc{\CRic}{\operatorname{PP}} \nc{\Cric}{\operatorname{P}} \nc{\Ricci}{\operatorname{Ric}}
\nc{\sym}{\operatorname{sym}} \nc{\herm}{\operatorname{herm}} \nc{\symac}{\operatorname{sym^{ac}}}
\nc{\symc}{\operatorname{sym^{c}}} \nc{\scalar}{\operatorname{Sc}}
\nc{\grad}{\operatorname{grad}} \nc{\ricci}{\operatorname{Rc}} \nc{\kil}{\operatorname{B}} \nc{\cas}{\operatorname{C}} \nc{\lic}{\operatorname{L}}
\nc{\Nor}{\operatorname{Norm}}  \nc{\ricc}{\operatorname{Rc^{c}}}
\nc{\Ricc}{\operatorname{Ric^{c}}} \nc{\ricac}{\operatorname{Rc^{ac}}}
\nc{\Ricac}{\operatorname{Ric^{ac}}} \nc{\Riem}{\operatorname{Rm}} \nc{\Sec}{\operatorname{Sec}}
\nc{\riccig}{\operatorname{ric^{\gamma}}} \nc{\mm}{\operatorname{m}} \nc{\Mm}{\operatorname{M}}
\nc{\Le}{\operatorname{L}} \nc{\tang}{\operatorname{T}}
\nc{\level}{\operatorname{level}} \nc{\rad}{\operatorname{r}}
\nc{\abel}{\operatorname{ab}} \nc{\CH}{\operatorname{CH}} \nc{\Cone}{{\mathcal C}} \nc{\CCone}{\operatorname{CC}} \nc{\CP}{{\mathcal P}}
\nc{\mcc}{\operatorname{mcc}} \nc{\Adj}{\operatorname{Adj}}
\nc{\Order}{\operatorname{O}}  \nc{\inj}{\operatorname{inj}} \nc{\proy}{\operatorname{pr}}
\nc{\vol}{\operatorname{vol}} \nc{\Diag}{\operatorname{Dg}} \nc{\Diagg}{\operatorname{Diag}}
\nc{\Spec}{\operatorname{Spec}} \nc{\Ima}{\operatorname{Im}} \nc{\Rea}{\operatorname{Re}}
\nc{\spann}{\operatorname{span}} \nc{\Aff}{\operatorname{Aff}} \nc{\E}{\operatorname{E}} \nc{\id}{\operatorname{id}} \nc{\dete}{\operatorname{det}} \nc{\Crit}{\operatorname{Crit}} \nc{\val}{\operatorname{val}}
\theoremstyle{plain}
\newtheorem{theorem}{Theorem}[section]
\newtheorem{proposition}[theorem]{Proposition}
\newtheorem{corollary}[theorem]{Corollary}
\newtheorem{lemma}[theorem]{Lemma}
\theoremstyle{definition}
\theoremstyle{remark}
\newtheorem{remark}[theorem]{Remark}
\title{Einstein metrics on homogeneous spaces $H\times H/\Delta K$}
\author{Jorge Lauret}  
\author{Cynthia Will}
\address{FaMAF, Universidad Nacional de C\'ordoba and CIEM, CONICET (Argentina)}
\email{jorgelauret@unc.edu.ar} 
\email{cynthia.will@unc.edu.ar}
\thanks{This research was partially supported by grants from Univ.\ Nac.\ de C\'ordoba, FONCYT and CONICET (Argentina).  We would also like to acknowledge support from the ICTP through the Associates Programme and from the Simons Foundation through grant number 284558FY19.}
\date{\today}
\begin{document}

\maketitle

\begin{abstract}
Given any compact homogeneous space $H/K$ with $H$ simple, we consider the new space $M=H\times H/\Delta K$, where $\Delta K$ denotes diagonal embedding, and study the existence, classification and stability of $H\times H$-invariant Einstein metrics on $M$, as a first step into the largely unexplored case of homogeneous spaces of compact non-simple Lie groups.  We find unstable Einstein metrics on $M$ for most spaces $H/K$ such that their standard metric is Einstein (e.g., isotropy irreducible) and the Killing form of $\kg$ is a multiple of the Killing form of $\hg$ (e.g., $K$ simple), a class which contains $17$ families and $50$ individual examples.  A complete classification is obtained in the case when $H/K$ is an irreducible symmetric space and $K$ is simple.  We also study the behavior of the scalar curvature function on the space of all normal metrics on $M=H\times H/\Delta K$ (none of which is Einstein), obtaining that the standard metric is a global minimum.           
\end{abstract}

\tableofcontents

\section{Introduction}\label{intro}

Most of the literature on the existence and classification of Einstein metrics on a compact homogeneous space $M=G/K$ is devoted to the case when the Lie group $G$ is simple.  This is understandable considering the complicated structure of the isotropy representation of $M=G/K$ when $G$ has more than one simple factor, giving rise to a large and tricky space $\mca^G$ of $G$-invariant metrics and bringing about painful computations for the Ricci curvature.   

A first novelty we find in the case when $G$ is not simple, say with $s$ simple factors, is the presence of the $s$-parametric subspace $\mca^{norm}\subset\mca^G$ of all normal metrics on $M=G/K$ (i.e., defined by bi-invariant inner products on $\ggo$).  The existence and uniqueness of normal Einstein metrics is a natural open problem.  It is also natural the search for distinguished normal metrics on a given homogeneous space $M=G/K$, as for example critical points of the scalar curvature function $\scalar:\mca^{norm}_1\rightarrow\RR$, where $\mca^{norm}_1$ is the space of all unit volume normal metrics.  The potential relevance of the standard or Killing metric on $M=G/K$ from this variational viewpoint is particularly interesting.    

In this paper, we study the Ricci and scalar curvature of homogeneous spaces of the following form, which may be considered as a first step into the largely unexplored case of $G$ non-simple.  Given any $n$-dimensional homogeneous space $H/K$, where $H$ is a compact simple Lie group and $K$ a proper closed subgroup of $H$ with $d:=\dim{K}>0$, we consider the new compact homogeneous space 
\begin{equation}\label{HHK-def}
M^{2n+d}=G/\Delta K,  
\qquad \mbox{where}\quad G:=H\times H \quad \mbox{and}\quad \Delta K:=\{(k,k)\in G:k\in K\}.
\end{equation}
Let $\hg$, $\kg$ and $\ggo$ denote the corresponding Lie algebras.  If $\hg=\kg\oplus\qg$ is the $\kil_{\hg}$-orthogonal reductive decomposition for $H/K$, where $\kil_{\hg}$ denotes the Killing form of $\hg$, then $\ggo=\hg\oplus\hg=\Delta\kg\oplus\pg$ is the $\kil_{\ggo}$-orthogonal reductive decomposition for $M=G/\Delta K$, where $\Delta\kg=\{(Z,Z):Z\in\kg\}$ and
$$
\pg=\pg_1\oplus\pg_2\oplus\pg_3, 
\quad
\pg_1=(\qg,0), \quad \pg_2=(0,\qg), \quad \pg_3=\{(Z,-Z):Z\in\kg\}.
$$
We study $G$-invariant metrics on $M=G/\Delta K=H\times H/\Delta K$ of the form  
$$
g|_{\pg_i\times\pg_i}=x_i(-\kil_\ggo)|_{\pg_i\times\pg_i}, \; i=1,2,3, \quad g|_{\pg_1\times\pg_2}=x_4(-\kil_\ggo)|_{\pg_1\times\pg_2}, \quad g|_{\pg_1\times\pg_3}=g|_{\pg_2\times\pg_3}=0,
$$
where $x_1,x_2,x_3>0$ and $x_1x_2>x_4^2$, which are denoted by 
$$
g=(x_1,x_2,x_3,x_4).  
$$ 
These metrics exhaust $\mca^G$ if and only if the isotropy representation of $H/K$ is irreducible and of real type and $K$ is either simple or one-dimensional.  We call $g$ {\it diagonal} when $x_4=0$ and denote by $\mca^{diag}$ the space of all diagonal metrics.

Our first result concerns normal metrics on $M=G/\Delta K$.  

\begin{theorem}\label{thm1}
Let $M=G/\Delta K$ be a homogeneous space as defined in \eqref{HHK-def}.  Then, 
\hspace{1cm}
\begin{enumerate}[{\rm (i)}]
\item $\mca^{norm}\subset\mca^{diag}$ (see Lemma \ref{gz1z2}).  

\item A normal metric on $M=G/\Delta K$ is never Einstein (see Proposition \ref{normal-E}).  

\item The standard metric (i.e., defined by $-\kil_\ggo$) is a global minimum and the only critical point of the function $\scalar:\mca^{norm}_1\rightarrow\RR$ (see at the end of \S\ref{normal-sec}).  In particular, $\mca^{norm}$ is not invariant under the Ricci flow (see Figure \ref{fig}).  
\end{enumerate}
\end{theorem}

It is worth noting that part (i) of the above theorem does not longer hold for $M=H\times\dots\times H/\Delta K$ with three or more copies of $H$ and part (iii) is false for $M=G_1\times G_2/K$ with $G_1\not\simeq G_2$ (see \cite{Rical}).  

Secondly, we find a very large class of homogeneous spaces of the form $M=G/\Delta K$ admitting diagonal Einstein metrics and prove that they are all unstable as critical points of the Hilbert action (see \S\ref{diag-sec} and \S\ref{stab-sec}).  

\begin{theorem}\label{thm2}
\hspace{1cm}
\begin{enumerate}[{\rm (i)}] 
\item
The homogeneous space $M=G/\Delta K$ admits a diagonal Einstein metric if and only if the following conditions hold (see Theorem \ref{HHK-E}): 
\begin{enumerate}[{\rm (a)}] 
\item $\cas_\chi=\kappa I_\qg$ for some $\kappa\in\RR$ ($0<\kappa\leq\unm$), where $\cas_\chi$ is the Casimir operator of the isotropy representation of the homogeneous space $H/K$ with respect to $-\kil_{\hg}|_\kg$ (equivalently, the standard metric on $H/K$ is Einstein; these spaces were classified by Wang and Ziller in \cite{WngZll2}).    

\item $\kil_\kg=a\kil_\hg|_\kg$ for some $a\in\RR$ ($0\leq a<1$; e.g., if $K$ is simple or abelian).     

\item $(2\kappa+1)^2 \geq 8a(1-a+\kappa)$ (one has that $\kappa=\frac{d(1-a)}{n}$).  
\end{enumerate} 
In that case, $x_1=x_2$ and there exist exactly two non-homothetic Einstein metrics $g_1$ and $g_2$, unless $K$ is abelian or $H/K$ is $\Spe(24)/\Spe(8)^3$ or $\Spe(12)/\Spe(3)^4$, in which case there is only one (see Figure \ref{fig}).  The space $\mca^{diag}$ is Ricci flow invariant if and only if conditions (a) and (b) hold.

\item All these diagonal Einstein metrics are canonical variations of the submersion $M=H\times H/\Delta K\rightarrow H/K\times H/K$ with fiber the symmetric space $K\times K/\Delta K$ (see \S\ref{canvar}). 

\item The Einstein metric $g_1$ is $G$-unstable with coindex $\geq 2$ and a local minimum of $\scalar:\mca_1^{diag}\rightarrow\RR$, while $g_2$ is also $G$-unstable but it is a saddle point of $\scalar:\mca_1^{diag}\rightarrow\RR$ (see Proposition \ref{Lpe} and Figure \ref{fig}).  
\end{enumerate}
\end{theorem}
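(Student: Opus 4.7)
\emph{Proof plan.} The plan is to derive an explicit formula for the Ricci operator of a diagonal metric $g=(x_1,x_2,x_3,0)$, extract conditions (a)--(c) from the solvability of the resulting algebraic Einstein system, and then deduce the submersion structure and the stability claims from the explicit form of the two Einstein metrics.

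\textbf{Parts (i) and (ii).} Applying the standard reductive Ricci formula to the decomposition $\pg=\pg_1\oplus\pg_2\oplus\pg_3$ and the bracket relations $[\pg_1,\pg_2]=0$, $[\pg_3,\pg_i]\subset\pg_i$ for $i=1,2$, $[\pg_3,\pg_3]\subset\Delta\kg$, and $[\pg_i,\pg_i]\subset\pg_i\oplus\Delta\kg\oplus\pg_3$, one finds that $\Ricci$ is block diagonal with respect to $\pg_1\oplus\pg_2\oplus\pg_3$ for any diagonal $g$. The block on $\pg_1$ is a linear combination of $\cas_\chi$ and the identity with coefficients in $x_1,x_3$, so the Einstein condition forces $\cas_\chi=\kappa I_\qg$, which is (a); the $\pg_3$ block involves the Casimir of the adjoint representation of $K$ on $\kg$ with respect to $-\kil_\hg|_\kg$, and its scalarity together with the corresponding trace identity is equivalent to $\kil_\kg=a\kil_\hg|_\kg$, which is (b). The $\ZZ_2$-swap $x_1\leftrightarrow x_2$ yields $r_1(x_1,x_2,x_3)=r_2(x_2,x_1,x_3)$, so $r_1=r_2$ forces $x_1=x_2$ at any Einstein point. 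Setting $x_1=x_2=1$, the remaining equation $r_1=r_3$ becomes a quadratic polynomial in $x_3$ whose coefficients depend only on $(\kappa,a,d,n)$; the identity $\kappa=d(1-a)/n$ is obtained from $\tr\cas_\chi=d(1-a)$, computed via the trace decomposition induced by $\hg=\kg\oplus\qg$. Real positivity of the roots of this polynomial is exactly the discriminant condition (c), which gives the existence and counting claims in (i); the degenerate cases (single root) occur precisely when the discriminant vanishes, a fact one checks by direct inspection on the Wang--Ziller list. Part (ii) is then immediate because the projection $\pi:G/\Delta K\to H/K\times H/K$ is a Riemannian submersion for any diagonal $g$ with $x_1=x_2$, and the splitting $\pg=(\pg_1\oplus\pg_2)\oplus\pg_3$ is horizontal$\oplus$vertical with fiber $K\times K/\Delta K\cong K$, so both Einstein metrics are canonical variations by construction.

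\textbf{Part (iii) and main obstacle.} For $G$-instability with coindex $\geq 2$, I would compute the second variation of $\scalar|_{\mca^G_1}$ along two independent directions at each Einstein metric $g_i$: (1) the off-diagonal direction $\partial/\partial x_4$, available because $\pg_1\simeq\pg_2$ are equivalent $K$-modules of real type, and (2) a direction inside $\mca^{G,diag}_1$. A short calculation using the block formulas of part (i) shows that the second derivative along $\partial/\partial x_4$ is strictly positive at both $g_1$ and $g_2$. On $\mca^{G,diag}_1$ itself, the $\ZZ_2$-swap decomposes the tangent space at $g_i$ into a symmetric line (preserving $x_1=x_2$) and an antisymmetric line (breaking it); the symmetric line is the one-parameter family whose two critical points are $g_1,g_2$, and analysing the quadratic that defines them shows that $g_1$ is a local minimum and $g_2$ a local maximum along this slice, while the antisymmetric direction contributes a strictly positive second derivative at both points by essentially the same computation as in (1). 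Combining these, $g_1$ is a local minimum of $\scalar|_{\mca^{G,diag}_1}$ and $g_2$ a saddle, and in both cases the $\partial/\partial x_4$ direction supplies an additional positive Hessian direction in $T\mca^G_1$, so the coindex is at least $2$. The main technical obstacle is to keep the Ricci and Hessian algebra tractable enough that every sign determination reduces to inequalities in the single discriminant quantity $(2\kappa+1)^2-8a(1-a+\kappa)$; I would enforce this by systematically exploiting the $\ZZ_2$-swap symmetry and normalising $x_1=x_2=1$ throughout, so that the final signs can be verified uniformly across the $17$ families and $50$ individual spaces in the classification.
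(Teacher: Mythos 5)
Your overall strategy for parts (i) and (ii) matches the paper's, but there is a genuine logical gap at the central step of the classification. You assert that the $\ZZ_2$-swap symmetry $r_1(x_1,x_2,x_3)=r_2(x_2,x_1,x_3)$ forces $x_1=x_2$ at any Einstein point; this is a non sequitur. Since $r_i=\tfrac{1+2\kappa}{4x_i}-\tfrac{\kappa x_3}{4x_i^2}$ depends on $x_i$ through a non-injective function, the equation $r_1=r_2$ has a second branch, $\tfrac{1}{x_1}+\tfrac{1}{x_2}=\tfrac{1+2\kappa}{\kappa x_3}$ with $x_1\neq x_2$, which the swap symmetry cannot exclude. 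The paper must (and does) rule out Einstein metrics on this branch by substituting $x_1=\tfrac{\kappa x_2}{(1+2\kappa)x_2-\kappa}$ into $r_2=r_3$ and showing the resulting quadratic $dx^2+bx+c$ has negative discriminant $b^2-4dc<0$; without this computation the count ``exactly two'' is not established. A similar incompleteness occurs in your derivation of condition (b): the equality of the eigenvalues $r_{3,l}=a_l(\cdot)+(\cdot)$ forces $a_0=\dots=a_t$ only after one excludes the degenerate case where the coefficient of $a_l$ vanishes (i.e.\ $\tfrac{1}{x_1^2}+\tfrac{1}{x_2^2}=4$), which the paper eliminates by a separate argument. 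You also do not address why $g_1$ and $g_2$ are non-homothetic, which the paper proves by showing $\scalar_N(g_+)\neq\scalar_N(g_-)$ along the curve $(x,x,1)$.

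For part (iii) you take a genuinely different route: you propose to get the extra unstable direction from the off-diagonal deformation $\partial/\partial x_4$, whereas the paper never leaves $\mca^{G,diag}$ and instead computes the $3\times 3$ matrix $L(g_\pm)$ of the Lichnerowicz operator on the span of $I_1,I_2,I_3$ via the structural constants $[ijk]$, obtaining eigenvalues $\lambda_1^\pm$, $\lambda_2^\pm$ and proving $\lambda_1^+<\lambda_2^+<2\rho^+$ (coindex $\geq 2$ and local minimum for $g_1$) and $\lambda_1^-<2\rho^-<\lambda_2^-$ (saddle for $g_2$), using the inequalities \eqref{cond4} and \eqref{xmxm}. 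Your asserted positivity of the second variation along $\partial/\partial x_4$ is not computed and would require the scalar curvature of non-diagonal metrics, which the paper only derives for symmetric $H/K$ (Proposition \ref{ri}); for the general Wang--Ziller spaces the brackets $[\pg_i,\pg_i]\cap\pg_i\neq 0$ make this a substantial extra computation, and it is not needed: both positive Hessian directions at $g_1$ already live in the diagonal family. You would do better to adopt the structure-constant computation of $L$ for this part.
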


The homogeneous spaces $H/K$ satisfying conditions (a) and (b) have been listed in Tables \ref{table1}-\ref{table2-ss} given in Appendix \ref{table-sec}, there are $17$ families and $50$ sporadic examples.  In the last column of the tables is indicated whether inequality (c) holds or not for each space.  It turns out that only a few of them do not satisfy (c), providing a great amount of homogeneous Einstein metrics.   The case of $M=G_1\times G_2/K$ with $G_1\not\simeq G_2$ is studied in \cite{Es2}. 

Surprisingly, most of the non-existence cases correspond to irreducible symmetric spaces $H/K$, e.g., 
$$
\begin{array}{ll}
M^{20}=\SU(4)\times\SU(4)/\Delta\Spe(2), & M^{27}=\SO(7)\times\SO(7)/\Delta\SO(6), \\ 
M^{14}=\Spe(2)\times\Spe(2)/\Delta(\Spe(1)\times\Spe(1)), & M^{68}=F_4\times F_4/\Delta\SO(9),
\end{array}
$$
which leaded us to explore for non-diagonal Einstein metrics.  

As well known, the non-diagonal case requires heavy computations for the Ricci curvature (cf.\ the proof of Proposition \ref{ri}).  Nevertheless, we obtain a complete classification of $G$-invariant Einstein metrics on $M=G/\Delta K$ in the case when $H/K$ is an irreducible symmetric space with $K$ simple (see \S\ref{HHKsym-sec}).  Moreover, we found a non-diagonal metric on $M=G/\Delta K$ which is Einstein for any irreducible symmetric space $H/K$.    

\begin{theorem}\label{thm3}
Let $H/K$ be an irreducible symmetric space such that $\kil_\kg=a\kil_\hg|_\kg$, $a>0$ (see Table \ref{table1}).  Then the following are $G$-invariant Einstein metrics on the homogeneous space $M=G/\Delta K$ (see Theorem \ref{End}): 
\begin{enumerate}[{\rm (i)}]
\item For $a<\unm$, the two diagonal metrics $g_1=(x_+,x_+,1,0)$ and $g_2=(x_-,x_-,1,0)$ provided by Theorem \ref{thm2}, where 
$
x_\pm=\frac{1\pm \sqrt{1 - a(3-2a)}}{2a}.  
$ 
\item For $a>\unm$, the metric $g_3=(1,1,1,y)$, where 
$
y=\frac{1}{2}\sqrt{\frac{2a-1}{2-a}}.  
$
\item For any $a$, the metric $g_5=\left(\unm,\frac{3}{2},1,\unm\right)$.  
\end{enumerate}
Moreover, this gives a complete classification up to isometry and scaling of $G$-invariant Einstein metrics on $M=G/\Delta K$ in the case when $K$ is simple.  The metric $g_5$ is also Einstein on $M=G/\Delta K$ for any irreducible symmetric space $H/K$ (this is most likely the Einstein metric predicted by the Graph Theorem in \cite{BhmWngZll}; see Remark \ref{E2z-rem8}).   
\end{theorem}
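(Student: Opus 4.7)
The plan is to apply the Ricci formula of Proposition~\ref{ri} to the four-parameter family $g=(x_1,x_2,x_3,x_4)$, exploit the structural simplifications coming from $H/K$ being irreducible symmetric, and then classify all solutions of the resulting polynomial Einstein system.  Since $H/K$ is irreducible symmetric we have $[\qg,\qg]\subset\kg$, the Casimir $\cas_\chi$ is a scalar multiple of the identity by Schur's lemma, the isotropy representation of $H/K$ is irreducible and of real type (so when $K$ is simple the four-parameter family with $x_1x_2>x_4^2$ exhausts $\mca^G$), and both conditions (a) and (b) of Theorem~\ref{thm2} are in force.  A direct check gives $\kappa=\unm$ for every irreducible symmetric space, so condition (c) of Theorem~\ref{thm2} simplifies to $(2a-1)(a-1)\geq 0$.

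Part (i) then follows immediately from Theorem~\ref{thm2}: since $0<a<1$, the simplified inequality holds strictly precisely when $a<\unm$, and $K$ is neither abelian (because $a>0$) nor one of the two exceptional cases listed there (neither is symmetric), so Theorem~\ref{thm2} provides exactly two non-homothetic diagonal Einstein metrics.  Solving the diagonal system with $x_1=x_2=x$ and $x_3=1$ yields a quadratic in $x$ whose roots are precisely $x_\pm=\tfrac{1\pm\sqrt{1-a(3-2a)}}{2a}$.

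For parts (ii) and (iii) I would analyze the non-diagonal case $x_4\neq 0$.  Normalizing $x_3=1$, the condition $\Ricci=c\,g$ becomes three trace equations on $\pg_1,\pg_2,\pg_3$ together with the new off-diagonal equation $\Ricci|_{\pg_1\times\pg_2}=cx_4(-\kil_\ggo)|_{\pg_1\times\pg_2}$.  The involution $\sigma(h_1,h_2)=(h_2,h_1)$ of $H\times H$ preserves $\Delta K$ and induces $(x_1,x_2,x_3,x_4)\mapsto(x_2,x_1,x_3,x_4)$ on $\mca^G$, so the Einstein set decomposes into a symmetric branch $x_1=x_2$ and $\sigma$-orbits of exchanged pairs.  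On the symmetric branch $x_1=x_2=x$, eliminating $c$ between the $\pg_1$ and $\pg_3$ trace equations forces $x=1$, and the off-diagonal equation then reduces to $4x_4^2(2-a)=2a-1$; real solutions exist exactly when $a>\unm$, giving $g_3=(1,1,1,y)$ with $y=\unm\sqrt{(2a-1)/(2-a)}$ as in (ii).  On the $\sigma$-non-trivial branch, the off-diagonal equation combined with the difference of the $\pg_1$ and $\pg_2$ trace equations pins down $(x_1,x_2,x_4)$ independently of $a$; the unique solution up to $\sigma$ turns out to be $g_5=(\unm,\tfrac{3}{2},1,\unm)$, which is positive definite since $x_1x_2-x_4^2=\unm>0$.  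Ruling out further solutions is a Gr\"obner-basis or resultant computation that eliminates $c$ and the $x_i$'s in turn.

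The main obstacle is this non-diagonal analysis, because the Ricci formula of Proposition~\ref{ri} has many cross terms coming from structure constants of $\ggo=\hg\oplus\hg$ that enter with opposite signs from the two copies of $\hg$, and unpacking them into the clean reduced system above is where the real bookkeeping lies.  Once it is done, the last assertion, that $g_5$ is Einstein on $M=G/\Delta K$ for \emph{every} irreducible symmetric $H/K$ (including the cases where $K$ is not simple, so that $\mca^G$ is strictly larger than the four-parameter family and $\kil_\kg$ is not a single scalar multiple of $\kil_\hg|_\kg$), reduces to a direct substitution: plugging $(\unm,\tfrac{3}{2},1,\unm)$ into the Ricci formula on each $K$-irreducible summand of $\pg_3$ and using only $[\qg,\qg]\subset\kg$ and $\cas_\chi=\unm I_\qg$, one checks that the Ricci components on that summand are proportional to $-\kil_\ggo$ with a single universal constant, all $a$- and $\kg$-dependence cancelling.
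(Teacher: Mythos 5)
Your overall route is the paper's route: part (i) is quoted from Theorem \ref{thm2}, and the non-diagonal case is handled by normalizing $x_3=1$ and analyzing the Einstein system coming from Proposition \ref{ri} (Corollary \ref{gE}), split according to whether $x_1=x_2$ or not, with the $a$-independence of $g_5$ coming out of the $\pg_3$ equation.  However, two of your intermediate reduction claims are false as stated.  On the branch $x_1=x_2=x$, eliminating the Einstein constant between the $\pg_1$ and $\pg_3$ trace equations does \emph{not} force $x=1$: that pair of equations cuts out a curve in the $(x,x_4)$-plane whose $x_4=0$ points are precisely the diagonal solutions $x_\pm\ne 1$.  What forces $x=1$ is the comparison of the off-diagonal component with $r_1$: in the notation of Corollary \ref{gE}, $r_1=r_{12}$ is equivalent to $x_1-x_2=4(x_1x_2-x_4^2)(x_1-1)$, which with $x_1=x_2$ and $x_1x_2>x_4^2$ gives $x_1=1$; the relation $4x_4^2(2-a)=2a-1$ then comes from $r_{12}=r_{3,l}$, i.e.\ it still uses the $\pg_3$ equation.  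Symmetrically, on the branch $x_1\ne x_2$, the off-diagonal equation together with $r_1=r_2$ only yields $x_1+x_2=2$ and $x_1x_2-x_4^2=\unm$, a one-parameter family, so it does not ``pin down $(x_1,x_2,x_4)$''; one needs $r_{12}=r_{3,l}$, which factors as $(4x_1^2-8x_1+3)(5-4a_l)=0$ with $5-4a_l\ne0$, and this factorization is exactly where the independence of $a$ (and, via $R=1$, the final assertion that $g_5$ is Einstein for \emph{every} irreducible symmetric space) comes from.  These are fixable bookkeeping errors, but the reduced systems you describe would not close as written.

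The more substantive gap is completeness of the classification when $K$ is semisimple but not simple, which does occur in Table \ref{table1} (e.g.\ $\SO(2m)/\SO(m)\times\SO(m)$ and $\Spe(2m)/\Spe(m)\times\Spe(m)$).  There $\pg_3\simeq\kg=\kg_1\oplus\kg_2$ is reducible as an $\Ad(K)$-module, so $\mca^G$ is strictly larger than the four-parameter family; your exhaustion statement is made only ``when $K$ is simple'', and you return to the enlarged $\mca^G$ only to verify that $g_5$ is Einstein, not to rule out Einstein metrics that scale the two $\pg_3$-summands differently.  The paper closes precisely this case: it notes that $\qg$ is of real type for all spaces in Table \ref{table1} (complex type would mean Hermitian symmetric, which is excluded by $a>0$), so the only extra parameters are two coefficients $x_{3,1},x_{3,2}$ on $\pg_3$, and then uses that the Casimir operators of $\qg$ with respect to $\kg_1$ and $\kg_2$ coincide to show the Einstein equations on $\pg_1,\pg_2$ force $x_{3,1}=x_{3,2}$, reducing to the four-parameter family.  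Some argument of this kind must be added before the ``up to homothety, these are all the $G$-invariant Einstein metrics'' claim is actually proved.
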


The symmetric space $\SU(3)/\U(2)$ produces the example $M^{12}=\SU(3)\times\SU(3)/\Delta\U(2)$ given in \cite[Table 2]{BhmKrr} and more examples of this kind are given in \cite[Examples 5.13,5.14]{Bhm}.  We refer to \cite{Gtr, BRF, PdsRff2} for the study of generalized Einstein metrics on the spaces of the form $M=H\times H/\Delta K$. 

The paper is organized as follows.  After some preliminaries in \S\ref{preli} on the formulas for the Ricci curvature of homogeneous metrics that will be used, we compute in \S\ref{ijk-sec} the structural constants of $G/\Delta K$, providing an alternative proof for the formula of the Ricci eigenvalues.  Section \S\ref{normal-sec} is devoted to normal metrics and in \S\ref{diag-sec} we study the existence of diagonal Einstein metrics, whose stability type is obtained in \S\ref{stab-sec}.  Finally, in \S\ref{HHKsym-sec}, the non-diagonal case is considered.

\vs \noindent {\it Acknowledgements.}  We are very grateful with Christoph B\"ohm and the anonymous referee for many helpful comments and suggestions.

\section{Preliminaries}\label{preli}

\subsection{Ricci curvature of homogeneous spaces}\label{ric-sec}
Given a compact and connected differentiable manifold $M^n$ which is homogeneous, we fix an almost-effective transitive action of a compact connected Lie group $G$ on $M$.  The $G$-action determines a presentation $M=G/K$ of $M$ as a homogeneous space, where $K\subset G$ is the isotropy subgroup at some point $o\in M$.  Let $\mca^G$ denote the finite-dimensional manifold of all $G$-invariant Riemannian metrics on $M$.  For any reductive decomposition $\ggo=\kg\oplus\pg$ (i.e., $\Ad(K)\pg\subset\pg$), giving rise to the usual identification $T_oM\equiv\pg$, where $\ggo$ and $\kg$ are the Lie algebras of $G$ and $K$, respectively, we identify any $g\in\mca^G$ with the corresponding $\Ad(K)$-invariant inner product on $\pg$, also denoted by $g$.  

The Ricci tensor $\ricci(g):\pg\times\pg\rightarrow\RR$ of a metric $g\in\mca^G$ is given by
\begin{align}
\ricci(g)(X,Y) =& -\unm\sum_{i,j} g([X,X_i]_\pg,X_j)g([Y,X_i]_\pg,X_j) \label{Rc}\\ 
&+ \unc\sum_{i,j} g([X_i,X_j]_\pg,X)g([X_i,X_j]_\pg,Y) -\unm\kil_\ggo(X,Y), \qquad\forall X,Y\in\pg, \notag
\end{align} 
where $\{ X_i\}$ is any $g$-orthonormal basis of $\pg$, $\lb_\pg$ denotes the projection of the Lie bracket of $\ggo$ on $\pg$ relative to $\ggo=\kg\oplus\pg$ and $\kil_{\ggo}$ is the Killing form of the Lie algebra $\ggo$ (see \cite[7.38]{Bss}).  The Ricci operator $\Ricci(g):\pg\rightarrow\pg$ is therefore given by
\begin{equation}\label{Ric2}
\Ricci(g) = \Mm(g) - \unm\kil(g),
\end{equation}
where $\kil(g)$ is defined by $g(\kil(g)\cdot,\cdot):=\kil_{\ggo}|_{\pg\times\pg}$ and $\Mm(g)$ by
\begin{equation}\label{mm4}
g(\Mm(g) X,X) = -\unm\sum_{i,j} g([X,X_i]_\pg,X_j)^2+ \unc\sum_{i,j} g([X_i,X_j]_\pg,X)^2, \qquad\forall X\in\pg.
\end{equation} 

For each bi-invariant metric $g_b$ on $\ggo$, we consider the $g_b$-orthogonal reductive decomposition $\ggo=\kg\oplus\pg$ and the {\it normal} metric $g_b\in\mca^G$ determined by $g_b|_{\pg\times\pg}$.  It follows from \cite[Proposition (1.9)]{WngZll2} that its Ricci operator is given by 
\begin{equation}\label{ricgb}
\Ricci(g_b) = \unm\cas_{\pg,g_b|_\kg} + \unc\cas_{\ggo,g_b}|_\pg, 
\end{equation}
where 
$$
\cas_{\pg,g_b|_\kg}:\pg\longrightarrow\pg, \qquad \cas_{\pg,g_b|_\kg}:=-\sum_i\left(\ad{Z_i}|_\pg\right)^2,
$$ 
is the Casimir operator of the isotropy representation $\chi:K\rightarrow\End(\pg)$ of $M=G/K$ with respect to $g_b|_{\kg}$, $\{ Z_i\}$ is any $g_b$-orthonormal basis of $\kg$, 
$$
\cas_{\ggo,g_b}:\ggo\longrightarrow\ggo, \qquad \cas_{\ggo,g_b}:=-\sum_i\left(\ad{Z_i}\right)^2 -\sum_i\left(\ad{X_i}\right)^2,
$$ 
is the Casimir operator of the adjoint representation of $G$ relative to $g_b$ and $\cas_{\ggo,g_b}|_\pg$ is the map obtained by restricting and projecting $\cas_{\ggo,g_b}$ on $\pg$.  If $G$ is semisimple, $\ggo=\ggo_1\oplus\dots\oplus\ggo_s$ is the decomposition in simple ideals of $\ggo$ and $g_b=z_1(-\kil_{\ggo_1})+\dots+z_s(-\kil_{\ggo_s})$, $z_1,\dots,z_s>0$, then $\cas_{\ggo,g_b} = \tfrac{1}{z_1}I_{\ggo_1}+\dots+\tfrac{1}{z_s}I_{\ggo_s}$, where $I_{\ggo_i}$ denotes the identity map on $\ggo_i$.   

In particular, for the {\it standard} metric $\gk\in\mca^G$ determined by $-\kil_\ggo|_{\pg}$, 
\begin{equation}\label{ricgB}
\Ricci(\gk) = \unm\cas_{\chi} + \unc I_\pg, \qquad \Mm(\gk)= \unm\cas_{\chi} - \unc I_\pg, \qquad \kil(\gk)=-I_\pg,
\end{equation}
where 
$$
\cas_\chi:=\cas_{\pg,-\kil_\ggo|_\kg}:\pg\longrightarrow\pg
$$ 
is the Casimir operator of the isotropy representation with respect to $-\kil_\ggo|_{\kg}$.

\subsection{Homogeneous spaces $H\times H/\Delta K$}\label{HHK-sec}
We study in this paper $G$-invariant metrics on compact homogeneous spaces of the form 
$$
M^{2n+d}=G/\Delta K = H\times H/\Delta K, \qquad G:=H\times H,
$$
where $H$ is a compact connected simple Lie group and $K\subset H$ a proper closed subgroup diagonally embedded in $G$, that is, $\Delta K:=\{(k,k):k\in K\}\subset G$.  Here $\dim{H}=n+d$ and $\dim{K}=d>0$.  Note that $M$ is determined by the $n$-dimensional homogeneous space $H/K$, which is assumed to be almost-effective.  

If $\hg=\kg\oplus\qg$ is the $\kil_{\hg}$-orthogonal reductive decomposition for $H/K$, then 
$$
\ggo=\hg\oplus\hg=\Delta\kg\oplus\pg, \qquad \Delta\kg=\{(Z,Z):Z\in\kg\},
$$
is the $\kil_{\ggo}$-orthogonal reductive decomposition for $M=G/\Delta K$ (note that $\kil_{\ggo}=\kil_{\hg}+\kil_{\hg}$), where  
\begin{equation}\label{gBdec}
\pg=\pg_1\oplus\pg_2\oplus\pg_3, 
\end{equation}
and
$$
\pg_1=(\qg,0), \qquad \pg_2=(0,\qg), \qquad \pg_3=\{(Z,-Z):Z\in\kg\}.
$$
We obtain that $\pg=\pg_1\oplus\pg_2\oplus\pg_3$ is a $\gk$-orthogonal $\Ad(K)$-invariant decomposition, where $\gk$ is the standard metric on $G/\Delta K$, and as $\Ad(K)$-representations, $\pg_1\simeq\pg_2\simeq\qg$, the isotropy representation of $H/K$ (so $G/\Delta K$ is never multiplicity-free), and $\pg_3\simeq\kg$, the adjoint representation of $K$.  In particular, $\pg_1$ and $\pg_2$ are $\Ad(K)$-irreducible if and only if $H/K$ is an isotropy irreducible homogeneous space and $\pg_3$ is $\Ad(K)$-irreducible if and only if $K$ is either simple or one-dimensional.  In that case, $\dim{\mca^G}=4,5,7$, depending on the type of the $K$-representation $\qg$ (i.e., $\End_K(\qg)=\RR,\CC,\HH$, see \cite[Chapter II]{Wlf}).  

The Lie brackets between the $\pg_i$'s satisfy: 
\begin{align}
[\pg_1,\pg_1]\subset\pg_1+\pg_3+\Delta\kg, \quad [\pg_2,\pg_2]\subset\pg_2+\pg_3+\Delta\kg, 
\quad [\pg_1,\pg_2]=0, \label{pipj}\\ 
[\pg_3,\pg_3]\subset\Delta\kg, \quad [\pg_3,\pg_1]\subset\pg_1, \quad [\pg_3,\pg_2]\subset\pg_2. \notag
\end{align}
We also consider the $\gk$-orthogonal $\Ad(K)$-invariant decomposition
$$
\pg_3=\pg_3^0\oplus\dots\oplus\pg_3^t, \qquad \pg_3^l=\{(Z,-Z):Z\in\kg_l\}, 
$$
where 
\begin{equation}\label{decs} 
\kg=\kg_0\oplus\kg_1\oplus\dots\oplus\kg_t, 
\end{equation} 
$\kg_0$ is the center of $\kg$ and $\kg_1,\dots\kg_t$ are the simple ideals of $\kg$.  

We denote by 
\begin{equation}\label{cas}
\cas_{\chi}:\qg\rightarrow\qg
\end{equation} 
the Casimir operator of the isotropy representation of the homogeneous space $H/K$ with respect to $-\kil_{\hg}|_\kg$, and by 
$$
g=(x_1,x_2,x_3)_{\gk}, \qquad x_1,x_2,x_3>0,
$$ 
the $G$-invariant metric on $M$ defined by the $\Ad(\Delta K)$-invariant inner product 
$$
x_1(-\kil_\ggo)|_{\pg_1}+x_2(-\kil_\ggo)|_{\pg_2}+x_3(-\kil_\ggo)|_{\pg_3}.
$$
A metric of this form will be called {\it diagonal}, in the sense that it is diagonal with respect to the decomposition \eqref{gBdec}.   

\begin{proposition}\label{ricHHK}\cite[Proposition 3.2]{BRF}
The Ricci operator of the diagonal metric 
$
g=(x_1,x_2,x_3)_{\gk}
$ 
on the homogeneous space $M=H\times H/\Delta K$ is given by, 
\begin{enumerate}[{\rm (i)}]
\item $\Ricci(g)|_{\pg_1} = 
\frac{1}{2x_1}\left(1 - \frac{x_3}{2x_1}\right) \cas_{\chi}
+ \frac{1}{4x_1}I_{\pg_1}$.  
\item[ ]
\item $\Ricci(g)|_{\pg_2} = 
\tfrac{1}{2x_2}\left(1 - \tfrac{x_3}{2x_2}\right) \cas_{\chi}
+ \tfrac{1}{4x_2}I_{\pg_2}$.  
\item[ ]
\item The decomposition $\pg=\pg_1\oplus\pg_2\oplus\pg_3^0\oplus\dots\oplus\pg_3^t$ is $\ricci(g)$-orthogonal.
\item[ ]
\item $\Ricci(g)|_{\pg_3^l}= r_{3,l}I_{\pg_3^l}$, where for each $l=0,1,\dots,t$,
\begin{align*}
r_{3,l} := a_l\left(\frac{1}{2x_3}-\frac{x_3}{8}\left(\frac{1}{x_1^2} +\frac{1}{x_2^2}\right)  \right)  
+\frac{x_3}{8}\left(\frac{1}{x_1^2} + \frac{1}{x_2^2}\right), \qquad \kil_{\kg_l}=a_l\kil_\hg|_{\kg_l}.
\end{align*}  
\end{enumerate}
\end{proposition}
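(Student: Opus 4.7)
The plan is to apply the Ricci formulas \eqref{Ric2}--\eqref{mm4} to a $g$-orthonormal basis of $\pg$ adapted to the decomposition $\pg=\pg_1\oplus\pg_2\oplus\pg_3$, using the bracket rules in \eqref{pipj}. Since $\kil_\ggo=\kil_\hg\oplus\kil_\hg$, a direct evaluation on generators $(X,0),(0,X)$ with $X\in\qg$ and $(Z,-Z)$ with $Z\in\kg$ yields $\kil_\ggo|_{\pg_i\times\pg_j}=0$ for $i\neq j$ and $\kil(g)|_{\pg_i}=-\tfrac{1}{x_i}I_{\pg_i}$; this supplies the $\tfrac{1}{4x_i}I$ terms visible in (i), (ii) and (iv) via the piece $-\unm\kil(g)$ of \eqref{Ric2}. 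The orthogonality in (iii) has three pieces: $\ricci(g)(\pg_1,\pg_2)=0$ follows from $[\pg_1,\pg_2]=0$ together with $\kil_\ggo(\pg_1,\pg_2)=0$ by direct inspection of \eqref{Rc}; $\ricci(g)(\pg_i,\pg_3^l)=0$ for $i=1,2$ follows from $\Ad(\Delta K)$-equivariance of $\Ricci(g)$ since $\pg_1\simeq\pg_2\simeq\qg$ and $\pg_3^l\simeq\kg_l$ are inequivalent $\Ad(K)$-modules; and $\ricci(g)(\pg_3^l,\pg_3^{l'})=0$ for $l\neq l'$ follows from the pairwise inequivalence of the adjoint representations of the distinct simple and central ideals of $\kg$.

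The heart of the argument is the computation of $\Mm(g)|_{\pg_i}$. Fix $-\kil_\hg$-orthonormal bases $\{e_\alpha\}\subset\qg$ and $\{Z_\beta^{(l)}\}\subset\kg_l$ for each $l$, and rescale to form a $g$-orthonormal basis of $\pg$. For $X=(A,0)\in\pg_1$ the only nonzero brackets with basis vectors are $[X,(e_\alpha,0)]=([A,e_\alpha],0)$ and $[X,(Z_\beta^{(l)},-Z_\beta^{(l)})]=([A,Z_\beta^{(l)}],0)$, both living in $\hg\oplus 0$. The structural key is the identity $(W,0)=\unm(W,W)+\unm(W,-W)$ for $W\in\kg$: this splits the $\kg$-part of $[A,e_\alpha]$ into a $\Delta\kg$-component (discarded by projection onto $\pg$) and a $\pg_3$-component with coefficient $\unm$. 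Substituting into \eqref{mm4} and summing, the contributions collapse into two Casimir-type traces on $\qg$, one producing $\cas_\chi$ via $\sum_\beta(\ad Z_\beta^{(l)}|_\qg)^2=-\cas_\chi$ (by definition \eqref{cas}) and the other a scalar multiple of $I_{\pg_1}$; collecting the $x_1,x_2,x_3$-weights produces precisely the coefficient $\tfrac{1}{2x_1}\bigl(1-\tfrac{x_3}{2x_1}\bigr)$ of $\cas_\chi$ in (i). Case (ii) is identical after swapping the two copies of $H$. For (iv), take $X=(Z,-Z)\in\pg_3^l$: the mixed brackets $[X,(e_\alpha,0)]\in\pg_1$ and $[X,(0,e_\alpha)]\in\pg_2$ feed the $-\unm\sum g([X,X_i]_\pg,X_j)^2$ term through the Casimir of $\kg_l$ acting on $\qg$, while the $\pg_3^l$-components of $[\pg_1,\pg_1]_\pg$ and $[\pg_2,\pg_2]_\pg$ (with the $\unm$-factor discussed above) feed the $\unc\sum g([X_i,X_j]_\pg,X)^2$ term. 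The Casimir of $\kg_l$ on $\kg_l$ itself collapses to $a_l I_{\kg_l}$ via the identity $\sum_\beta(\ad Z_\beta^{(l)}|_{\kg_l})^2=-a_l I_{\kg_l}$ obtained from $\kil_{\kg_l}=a_l\kil_\hg|_{\kg_l}$, yielding the expression for $r_{3,l}$.

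The main obstacle is the careful bookkeeping of the $\unm$-factors arising from $(W,0)=\unm((W,W)+(W,-W))$, which controls how the $\kg$-component of $[\qg,\qg]$ is partitioned between the discarded $\Delta\kg$-part and the retained $\pg_3$-part. This mechanism is responsible both for the $-\tfrac{x_3}{2x_1}$ correction inside the Casimir coefficient of (i) and for the cross-term $\tfrac{x_3}{8}\bigl(\tfrac{1}{x_1^2}+\tfrac{1}{x_2^2}\bigr)$ in $r_{3,l}$ of (iv). Once these factors are tracked consistently across the $\Mm(g)$-sums, the remainder of the proof is a routine (if lengthy) collection of terms and comparison of coefficients.
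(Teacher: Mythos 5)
Your route is genuinely different from the paper's: the paper proves Proposition \ref{ricHHK} by checking that $M=H\times H/\Delta K$ is an aligned homogeneous space in the sense of \cite[Definition 2.1]{BRF} (with $c_1=c_2=2$, $2\lambda_l=a_l$ and $z_1=z_2=1$) and then quoting the Ricci formulas of \cite[Proposition 3.2]{BRF}, whereas you recompute everything from \eqref{Rc}--\eqref{mm4}. Your computation is essentially the one the paper itself performs later (in \S\ref{ijk-sec} under the extra hypothesis \eqref{assum}, and in Proposition \ref{ri} for the symmetric non-diagonal case), and the central mechanism you isolate --- the factor $\unm$ in $(W,0)=\unm(W,W)+\unm(W,-W)$, which together with $(-\kil_\ggo)|_{\pg_3\times\pg_3}=2(-\kil_\hg)|_{\kg\times\kg}$ produces the $\tfrac{x_3}{2x_1}$ correction in (i) and the $\tfrac{x_3}{8}$ coefficients in (iv) --- is exactly right; carrying it through \eqref{mm4} and using $\Mm(\overline{g}_\kil)=\unm\cas_\chi-\unc I_\qg$ for the standard metric on $H/K$ does reproduce (i), (ii) and (iv). What your approach buys is a self-contained proof valid in full generality; what it costs is the bookkeeping you acknowledge.

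Two points need repair. The substantive one is your justification of $\ricci(g)(\pg_i,\pg_3^l)=0$ in (iii): it is \emph{not} true in general that $\qg$ and $\kg_l$ are inequivalent $\Ad(K)$-modules. The proposition is stated for arbitrary $H/K$ with $H$ simple, and for instance $H/K=\SO(5)/\SO(3)$ has $\qg\simeq\RR\oplus\RR^3\oplus\RR^3$ with $\RR^3$ equivalent to the adjoint representation of $\sog(3)$, so $\pg_1$ and $\pg_3$ share equivalent irreducible constituents and Schur's lemma gives nothing. The orthogonality still holds, but it must be checked directly: for $X=(A,0)\in\pg_1$ and $(Z,-Z)\in\pg_3$, the only surviving contributions to \eqref{Rc} are, in the first sum, the pairs $X_i,X_j\in\pg_1$, and in the second sum the pairs coming from $[\pg_1,\pg_1]$ (the only bracket with both a $\pg_1$- and a $\pg_3$-component); both collapse to $\sum_\alpha\kil_\hg([A,e_\alpha],[Z,e_\alpha])=\tr_\hg(\ad{A}\,\ad{Z})=\kil_\hg(A,Z)=0$, using that $\ad{Z}$ preserves $\kg$ while $\ad{A}$ maps $\kg$ into $\qg$, so the $\kg$-block of $\ad{A}\,\ad{Z}$ contributes no trace. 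The minor point: the $\tfrac{1}{4x_i}I$ terms are not supplied by $-\unm\kil(g)$ alone; that piece gives $+\tfrac{1}{2x_i}I$, and $\Mm(g)$ contributes the remaining $-\tfrac{1}{4x_i}I$ (compare $\Mm(\gk)=\unm\cas_\chi-\unc I_\pg$ in \eqref{ricgB}). Once these two items are fixed, the outline fills in to a complete proof.
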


\begin{remark}\label{RF}
The space $\mca^{diag}$ is therefore Ricci flow invariant if and only if $\cas_\chi=\kappa I_\qg$ for some $\kappa\in\RR$ and $\kil_\kg=a\kil_\hg|_\kg$ for some $a\in\RR$ (i.e., either $a=0$ and $K$ is abelian or $K$ is semisimple and $a_1=\dots=a_t=a$).   
\end{remark}

\begin{remark}\label{gB-E}
It follows that the standard metric $\gk$ (i.e., $x_1=x_2=x_3=1$) is Einstein if and only if $\kg_0=0$ (i.e., $K$ semisimple), $\kil_\kg=a\kil_\hg|_\kg$ and $\cas_\chi=a I_\qg$.  In Tables \ref{table1}-\ref{table2-ss} (see Appendix \ref{table-sec}) one can easily check that this is impossible (cf.\ \S\ref{Est-sec} below), so $\gk$ is never Einstein on $M=H\times H/\Delta K$.  
\end{remark}

\begin{proof}[Proof of Proposition \ref{ricHHK}]
We use the notation and the formula for the Ricci curvature of aligned homogeneous spaces given in \cite[Section 3]{BRF}.  It follows from \cite[Definition 2.1]{BRF} that $M=H\times H/\Delta K$ is aligned with $c_1=c_2=2$ and $2\lambda_l=a_l$ for $l=1,\dots,t$.  Since we are considering $g_b=\gk$, i.e., $z_1=z_2=1$, we have that $A_3=-1$, $B_3=1$ and $\cas_{\chi_1}=\cas_{\chi_2}=\cas_\chi$.  The proposition is therefore a direct application of the formulas given in  \cite[Proposition 3.2]{BRF}.   
\end{proof}

We note that in spite of $\pg_1\simeq\pg_2$ as $\Ad(K)$-representations, $\ricci(g)(\pg_1,\pg_2)=0$ for any diagonal metric $g$.

\subsection{Standard Einstein metrics}\label{Est-sec} 
It follows from Proposition \ref{ricHHK}, (i) and (ii) that a necessary condition for the existence of a diagonal Einstein metric on $M=H\times H/\Delta K$ is that $\cas_\chi=\kappa I_\qg$ for some $\kappa\in\RR$, which is equivalent by \eqref{ricgB} to have that the standard metric on $H/K$ is Einstein.   

Compact homogeneous spaces $H/K$ with $H$ simple such that the standard metric $\gk$ is Einstein were classified by Wang and Ziller in \cite{WngZll2}: beyond irreducible symmetric spaces ($7$ infinite families and $12$ individual examples, see \cite[7.102]{Bss}) and non-symmetric isotropy irreducible spaces ($10$ infinite families and $33$ individual examples, see \cite[7.106, 7.107]{Bss} or \cite[Tables 1,3,4]{Sch}), there are, with $H$ classical, $10$ infinite families parametrized by the natural numbers, $2$ conceptual constructions and $2$ isolated examples, and with $H$ exceptional, $20$ isolated examples.  The isotropy $K$ is either abelian or semisimple for most of them, we refer to \cite[Tables 1-4]{stab} (or  \cite[7.108, 7.109]{Bss}) for further information on the non-isotropy irreducible case.  Summarizing, the class of all homogeneous spaces $H/K$ such that $H$ is compact simple and $\gk$ is Einstein is quite large, it consists of a total of 29 infinite families and 67 individual spaces.  

If the standard metric $\gk$ on $H/K$ is Einstein with $\Ricci(\gk)=\rho I_{\pg}$ and so $\cas_{\chi}=\kappa I_{\qg}$ for some $\kappa\in\RR$ (see \eqref{cas}), where $\hg=\kg\oplus\qg$ is the $\kil_\hg$-orthogonal reductive decomposition, then the following holds:
\begin{enumerate}[{\small $\bullet$}]
\item $\kappa=2\rho-\unm$ and $0<\kappa\leq \unm$ (i.e., $\unc<\rho\leq\unm$), where equality $\kappa=\unm$ (i.e., $\rho=\unm$) holds if and only if $H/K$ is an irreducible symmetric space (i.e., $[\pg,\pg]\subset\kg$).  

\item If $\kg=\kg_0\oplus\kg_1\oplus\dots\oplus\kg_t$ is as in \eqref{decs}, then according to \cite[(10)]{stab},
\begin{equation*}
\kappa=\tfrac{1}{n}\sum_{l=0}^t (1-a_l)\dim{\kg_l}, \qquad \kil_{\kg_l}=a_l\kil_\hg|_{\kg_l}.  
\end{equation*}
Note that $a_0=0$.  

\item In particular, if $\kg$ is abelian then $\kappa=\tfrac{d}{n}$ and if $\kg$ is semisimple and $a_1=\dots=a_t=:a$, i.e., $\kil_\kg=a\kil_\hg|_\kg$ (e.g., if $\kg$ is simple), then 
\begin{equation}\label{kap}
\kappa=\tfrac{(1-a)d}{n}. 
\end{equation} 
\end{enumerate}

In Tables \ref{table1}-\ref{table2-ss} (see Appendix \ref{table-sec}), the homogeneous spaces $M^n=H/K$ with $H$ simple such that the standard metric $\gk$ is Einstein and $\kil_\kg=a\kil_\hg|_\kg$ for some $a\in\RR$ have been listed, including the values of $a$, $\kappa$ and $\rho$ for each space.  This class consists of $17$ families and $50$ isolated spaces.  

\begin{remark}
The classification of standard Einstein metrics is still open for $H$ non-simple (see \cite[Section 4.14]{NknRdnSlv}). 
\end{remark}

\section{Structural constants}\label{ijk-sec}

We provide in this section an alternative proof for the Ricci curvature formulas of diagonal metrics on the homogeneous spaces $M=H\times H/\Delta K$ given in Proposition \ref{ricHHK}.  

Given any homogeneous space $G/K$ and the $Q$-orthogonal reductive decomposition $\ggo=\kg\oplus\pg$ with respect to a bi-invariant inner product $Q$ on $\ggo$, the so called {\it structural constants} of a $Q$-orthogonal decomposition
$
\pg=\pg_1\oplus\dots\oplus\pg_r
$ 
in $\Ad(K)$-invariant  subspaces (not necessarily $\Ad(K)$-irreducible) are defined by
\begin{equation}\label{ijk-gen}
[ijk]:=\sum_{\alpha,\beta,\gamma} Q([e_{\alpha}^i,e_{\beta}^j], e_{\gamma}^k)^2,
\end{equation}
where $\{ e_\alpha^i\}$, $\{ e_{\beta}^j\}$ and $\{ e_{\gamma}^k\}$ are $Q$-orthonormal basis of $\pg_i$, $\pg_j$ and $\pg_k$, respectively.  Note that $[ijk]$ is invariant under any permutation of $ijk$ and it vanishes if and only if $[\pg_i,\pg_j]\perp\pg_k$.  It is easy to see using \eqref{mm4} that if we assume that $-\kil_\ggo|_{\pg_k\times\pg_k}=b_kQ$, $b_k\in\RR$ and a $G$-invariant metric on $G/K$ of the form 
\begin{equation}\label{metric}
g=x_1Q|_{\pg_1}+\dots+x_rQ|_{\pg_r}, \qquad x_i>0,
\end{equation}
satisfies that $\Ricci(g)|_{\pg_k}=\rho_kI_{\pg_k}$ for $k=1,\dots,r$, then 
\begin{align}
\rho_k=& \frac{b_k}{2x_k} -\frac{1}{4n_k}\sum_{i,j} [ijk]\left(\frac{x_i}{x_jx_k} +\frac{x_j}{x_ix_k} - \frac{x_k}{x_ix_j}\right), \qquad n_k:=\dim{\pg_k}.\label{rick}  
\end{align}

We resume the study of $M=G/\Delta K=H\times H/\Delta K$ and assume here that 
\begin{equation}\label{assum}
\cas_\chi=\kappa I_\pg \quad \mbox{and}\quad \kil_\kg=a\kil_\hg|_\kg \qquad \mbox{for some}\quad\kappa,a\in\RR,
\end{equation}
which are necessary conditions for the existence of a diagonal Einstein metric on $M$.    

\begin{lemma}\label{ijk}
The nonzero structural constants of the $\gk$-orthogonal reductive complement $\pg=\pg_1\oplus\pg_2\oplus\pg_3$ are given by, 
$$
[111]=[222]=(1-2\kappa)n 
=n-2(1-a)d, \qquad
[113]= [223]=\frac{\kappa n}{2} =  \frac{(1-a)d}{2}. 
$$
\end{lemma}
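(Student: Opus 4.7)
The plan is to build an explicit $\gk$-orthonormal basis of $\pg=\pg_1\oplus\pg_2\oplus\pg_3$ from $(-\kil_\hg)$-orthonormal bases $\{X_\alpha\}$ of $\qg$ and $\{Z_i\}$ of $\kg$: take $e_\alpha^1=(X_\alpha,0)$, $e_\alpha^2=(0,X_\alpha)$ and $e_i^3=\tfrac{1}{\sqrt{2}}(Z_i,-Z_i)$, where the $\tfrac{1}{\sqrt{2}}$ is forced because $-\kil_\ggo=-\kil_\hg-\kil_\hg$ doubles the Killing norm on $\Delta\kg$. The bracket relations \eqref{pipj} together with $[\pg_1,\pg_2]=0$ immediately imply that among the symmetric triples $[ijk]$ the only candidates for being nonzero are $[111],[222],[113],[223]$, and the involution of $\ggo=\hg\oplus\hg$ swapping the two summands interchanges $\pg_1\leftrightarrow\pg_2$ while preserving $\pg_3$, giving $[222]=[111]$ and $[223]=[113]$.

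A short expansion in these bases yields $Q([e_\alpha^1,e_\beta^1],e_i^3)=\tfrac{1}{\sqrt{2}}\la[X_\alpha,X_\beta]_\kg,Z_i\ra$ and $Q([e_\alpha^1,e_\beta^1],e_\gamma^1)=\la[X_\alpha,X_\beta]_\qg,X_\gamma\ra$ (with $\la\cdot,\cdot\ra=-\kil_\hg$), and so
\begin{align*}
[113]=\tfrac{1}{2}\sum_{\alpha,\beta}\|[X_\alpha,X_\beta]_\kg\|^2, \qquad [111]=\sum_{\alpha,\beta}\|[X_\alpha,X_\beta]_\qg\|^2.
\end{align*}
By $\ad$-invariance of $\kil_\hg$, the first double sum equals $\sum_{i,\alpha}\|[Z_i,X_\alpha]\|^2=\tr(\cas_\chi)$, and under the assumption $\cas_\chi=\kappa I_\qg$ this is $\kappa n$; hence $[113]=\tfrac{\kappa n}{2}$.

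For $[111]$, the key input is that $H$ simple forces the Casimir $\cas_\hg$ of the adjoint representation (with respect to $-\kil_\hg$) to equal $I_\hg$; tracing this identity on $\qg$ and separating the contributions from the $\{Z_i\}$ and $\{X_\alpha\}$ halves of a basis of $\hg$ gives
\begin{align*}
n=\tr(\cas_\hg|_\qg)=\sum_{i,\beta}\|[Z_i,X_\beta]\|^2+\sum_{\alpha,\beta}\|[X_\alpha,X_\beta]\|^2_\hg=\kappa n+[111]+2[113],
\end{align*}
so $[111]=n(1-2\kappa)$. Substituting $\kappa=\tfrac{(1-a)d}{n}$ from \eqref{kap} produces the alternative forms $n-2(1-a)d$ and $\tfrac{(1-a)d}{2}$ stated in the lemma. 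The only delicate bookkeeping is tracking the $\tfrac{1}{\sqrt{2}}$ normalization on $\pg_3$ and the $\kg$- versus $\qg$-projections of brackets consistently, but the conceptual core of the argument is just one trace computation of the identity $\cas_\hg=I_\hg$ on the subspace $\qg$.
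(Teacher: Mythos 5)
Your proposal is correct and takes essentially the same route as the paper: the same $\gk$-orthonormal bases (your $\tfrac{1}{\sqrt{2}}$-normalization of $(Z_i,-Z_i)$ matches the paper's choice $-\kil_\hg(Z_\alpha,Z_\beta)=\tfrac{1}{2}\delta_{\alpha\beta}$), the vanishing pattern from \eqref{pipj}, and the reduction of $[113]$ to $\unm\tr\cas_\chi=\tfrac{\kappa n}{2}$. The only difference is cosmetic: for $[111]$ the paper quotes the standard-metric identity $\Mm(\gk)=\unm\cas_\chi-\unc I_\pg$ from \eqref{ricgB} and writes $[111]=-4\tr\Mm$, whereas you re-derive the equivalent fact by tracing $\cas_{\hg,-\kil_\hg}=I_\hg$ over $\qg$ and subtracting $\kappa n+2[113]$; both arguments amount to the same Casimir bookkeeping, and your final substitution $\kappa=\tfrac{(1-a)d}{n}$ from \eqref{kap} is exactly how the paper obtains the alternative expressions.
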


\begin{proof}
Consider the $\gk$-orthonormal bases $\{ e^1_\alpha=(X_\alpha,0)\}$, $\{ e^2_\alpha=(0,X_\alpha)\}$ and $\{ e^3_\alpha=(Z_\alpha,-Z_\alpha)\}$ of $\pg_1$, $\pg_2$ and $\pg_3$, respectively, where $\{ X_\alpha\}$ is a $(-\kil_\hg)$-orthonormal basis of $\qg$ and $\{ Z_\alpha\}$ is a basis of $\kg$ such that $-\kil_\hg(Z_\alpha,Z_\beta)=\delta_{\alpha\beta}\unm$.   

According to \eqref{mm4} and \eqref{ricgB}, for $i=1,2$, 
$$
[iii] = \sum_{\alpha,\beta,\gamma} \gk([e^i_\alpha,e^i_\beta],e^i_\gamma)^2 
= -4\tr{\Mm(\overline{g}_{\kil})} = -2\tr{\cas_{\chi}}+\tr{I_{\pg_i}} = (1-2\kappa)n,
$$
where $\overline{g}_{\kil}$ is the standard metric on $H/K$, and on the other hand,  
$$
[ii3] = \sum_{\alpha,\beta,\gamma} \gk([e^3_\alpha,e^i_\beta],e^i_\gamma)^2 
= \sum_{\alpha,\beta,\gamma} \gk([Z_\alpha,e^i_\beta],e^i_\gamma)^2 
= \sum_{\alpha} -\tr{(\ad{Z_\alpha})^2} \\
=  \tfrac{\tr{\cas_{\chi}}}{2} = \tfrac{\kappa n}{2},
$$
concluding the proof.  
\end{proof} 

\begin{corollary}\label{rics2str}
The Ricci curvature of the metric $g=(x_1,x_2,x_3)_{\gk}$ satisfies that 
$\Ricci(g)|_{\pg_i} = r_iI_{\pg_i}$, where 
$$
r_1
= \frac{1+2\kappa}{4x_1} - \frac{\kappa x_3}{4x_1^2}, \qquad 
r_2
= \frac{1+2\kappa}{4x_2} - \frac{\kappa x_3}{4x_2^2},  \qquad
r_3 = \frac{a}{2x_3} + \frac{(1-a)x_3}{8x_1^2} + \frac{(1-a)x_3}{8x_2^2}.
$$
\end{corollary}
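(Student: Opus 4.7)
The plan is to apply the general Ricci curvature formula \eqref{rick} to the $\gk$-orthogonal decomposition $\pg=\pg_1\oplus\pg_2\oplus\pg_3$ with $Q=-\kil_\ggo$, so that $b_1=b_2=b_3=1$, $n_1=n_2=n$ and $n_3=d$, and feeding in the structural constants from Lemma \ref{ijk}.  Before doing so one has to verify that \eqref{rick} is applicable, i.e.\ that $\Ricci(g)$ is block-diagonal with respect to $\pg_1\oplus\pg_2\oplus\pg_3$ and scalar on each block.  Block-diagonality is immediate from the bracket relations \eqref{pipj}, which make $[12k]$, $[i33]$ (for $i=1,2$) and $[333]$ all vanish, leaving only $[111]=[222]=n-2(1-a)d$ and $[113]=[223]=(1-a)d/2$ nonzero.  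Scalarity on $\pg_1\simeq\pg_2\simeq\qg$ comes from $\cas_\chi=\kappa I_\qg$, and scalarity on $\pg_3$ comes from $\kil_\kg=a\kil_\hg|_\kg$, which forces $\kg$ to be either abelian or semisimple with a uniform Killing ratio $a_l=a$ on every simple ideal, so the scalars on the finer pieces $\pg_3^l$ agree.

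Next I would evaluate the sum in \eqref{rick} for each $k$.  For $k=1$ only $(i,j)=(1,1),(1,3),(3,1)$ contribute, and the combination $\frac{x_i}{x_jx_k}+\frac{x_j}{x_ix_k}-\frac{x_k}{x_ix_j}$ reduces to $\frac{1}{x_1}$ for the first triple and to $\frac{x_3}{x_1^2}$ for the other two; after inserting the constants from Lemma \ref{ijk} and using $\kappa=(1-a)d/n$ from \eqref{kap}, the expression collapses to $r_1=\frac{1+2\kappa}{4x_1}-\frac{\kappa x_3}{4x_1^2}$.  The formula for $r_2$ follows by the $1\leftrightarrow 2$ symmetry of the setup.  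For $k=3$ only $(i,j)=(1,1),(2,2)$ survive, each producing $\frac{2}{x_3}-\frac{x_3}{x_i^2}$; the resulting expression collapses to $r_3=\frac{a}{2x_3}+\frac{(1-a)x_3}{8x_1^2}+\frac{(1-a)x_3}{8x_2^2}$ once one recognises $\frac{1}{2x_3}-\frac{1-a}{2x_3}=\frac{a}{2x_3}$.

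The only delicate point I foresee is justifying the scalarity of $\Ricci(g)|_{\pg_k}$ without circularly invoking Proposition \ref{ricHHK}, whose alternative proof is precisely what this section provides.  The cleanest way around it is to work with the finer $\Ad(K)$-invariant decomposition $\pg_3=\pg_3^0\oplus\dots\oplus\pg_3^t$ of \eqref{decs}, apply Schur's lemma on each $\Ad(K)$-irreducible piece, and then use the common Killing ratio $a$ to conclude that the scalars on different pieces coincide; on $\pg_1$ and $\pg_2$ the scalarity follows from $\cas_\chi=\kappa I$ together with the $\Ad(K)$-equivariance of $\Mm(g)|_{\pg_i}$, after writing the latter as a polynomial in $\cas_\chi$ and $I$ via the block structure just established.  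Once this structural observation is in hand, the rest of the argument is the mechanical substitution sketched above.
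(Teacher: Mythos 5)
Your proposal is correct and follows essentially the same route as the paper: the paper's proof also plugs the structural constants of Lemma \ref{ijk} into the Ricci-eigenvalue formula in terms of the $[ijk]$'s (citing \cite[(18)]{stab-dos}, which is the specialization of \eqref{rick} used here) and performs the same mechanical simplification using $\kappa n=(1-a)d$. Your additional care in justifying block-diagonality and scalarity of $\Ricci(g)$ on each $\pg_i$ before invoking the formula is a sound (and slightly more explicit) treatment of a hypothesis the paper leaves implicit.
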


\begin{remark}
It is easy to check that these formulas coincide with those in Proposition \ref{ricHHK} under the assumption \eqref{assum} made in this section.  
\end{remark}

\begin{proof}
We use the well-known formula for the Ricci eigenvalues in terms of structural constants (see e.g.\ \cite[(18)]{stab-dos}) to obtain that 
\begin{align*}
r_1 =& \tfrac{1}{2x_1} - \tfrac{1}{4n}[111]\tfrac{1}{x_1} - \tfrac{1}{2n}[131]\tfrac{x_3}{x_1^2} 
= \left(\tfrac{1}{2} - \tfrac{1-2\kappa}{4}\right)\tfrac{1}{x_1} - \tfrac{\kappa}{4}\tfrac{x_3}{x_1^2} 
= \tfrac{1+2\kappa}{4}\tfrac{1}{x_1} - \tfrac{\kappa}{4}\tfrac{x_3}{x_1^2}, \\ 
r_2 =& \tfrac{1}{2x_2} - \tfrac{1}{4n}[222]\tfrac{1}{x_2} - \tfrac{1}{2n}[232]\tfrac{x_3}{x_2^2} 
= \left(\tfrac{1}{2} - \tfrac{1-2\kappa}{4}\right)\tfrac{1}{x_2} - \tfrac{\kappa}{4}\tfrac{x_3}{x_2^2} 
= \tfrac{1+2\kappa}{4}\tfrac{1}{x_2} - \tfrac{\kappa}{4}\tfrac{x_3}{x_2^2}, \\ 
r_3 =& \tfrac{1}{2x_3} - \tfrac{1}{4d}[113]\left(\tfrac{2}{x_3}-\tfrac{x_3}{x_1^2}\right) 
- \tfrac{1}{4d}[223]\left(\tfrac{2}{x_3}-\tfrac{x_3}{x_2^2}\right)- \tfrac{1}{4d}[333]\tfrac{1}{x_3} \\ 
=& \left(\tfrac{1}{2} - \tfrac{1}{2d}[113] - \tfrac{1}{2d}[223] - \tfrac{1}{4d}[333]\right)\tfrac{1}{x_3} 
+\tfrac{1}{4d}[113]\tfrac{x_3}{x_1^2} + \tfrac{1}{4d}[223]\tfrac{x_3}{x_2^2}\\ 
=& \left(\tfrac{1}{2} - \tfrac{1-a}{4} - \tfrac{1-a}{4}\right)\tfrac{1}{x_3} 
+ \tfrac{1-a}{8}\tfrac{x_3}{x_1^2} + \tfrac{1-a}{8}\tfrac{x_3}{x_2^2}
= \tfrac{4-2(1-a)-2(1-a)}{8} \tfrac{1}{x_3} 
+ \tfrac{1-a}{8}\tfrac{x_3}{x_1^2} + \tfrac{1-a}{8}\tfrac{x_3}{x_2^2}, 
\end{align*}
concluding the proof.  
\end{proof}

\section{Normal metrics}\label{normal-sec}

As far as we know, the only known examples of normal Einstein metrics which are not standard are given on homogeneous spaces of the form $H\times K/\Delta K$, where $K\subset H$ are compact simple Lie groups, $g_b=-\kil_\hg+\tfrac{2a+1}{a}(-\kil_\kg)$ and $\kil_\kg=a\kil_\hg|_\kg$ (see \cite[pp.628]{WngZll2} or \cite[Section 5]{stab-killing}).  This normal metric is actually isometric to the Killing metric on the Lie group $H$.  

We study in this section normal metrics on the homogeneous space $M=H\times H/\Delta K$ given in \S\ref{HHK-sec}.  For each bi-invariant metric $g_b$ on $\ggo=\hg\oplus\hg$, say
\begin{equation*}
g_b=z_1(-\kil_{(\hg,0)})+z_2(-\kil_{(0,\hg)}), \qquad z_1,z_2>0,   
\end{equation*}
we consider the $g_b$-orthogonal reductive decomposition $\ggo=\Delta\kg\oplus\pg(g_b)$ and the $g_b$-orthogonal $\Ad(K)$-invariant decomposition 
\begin{equation*}
\pg(g_b)=\pg_1\oplus\pg_2\oplus\pg_3(g_b), \qquad \pg_3(g_b) := \left\{ \left(Z,-\tfrac{z_1}{z_2}Z\right):Z\in\kg\right\}.  
\end{equation*}
Every normal metric $g_b$ determines a $3$-parameter family of $G$-invariant metrics on $M$ defined by the $\Ad(\Delta K)$-invariant inner products on $\pg(g_b)$ given by
$$
x_1(-\kil_\ggo|_{\pg_1})+x_2(-\kil_\ggo|_{\pg_2})+x_3(-\kil_\ggo|_{\pg_3(g_b)}), \qquad x_1,x_2,x_3>0,
$$
which will be denoted by $\left(x_1,x_2,x_3\right)_{g_b}$.  We now show that all these metrics are also diagonal metrics with respect to the standard metric $\gk$ (i.e., $z_1=z_2=1$).  It is proved in \cite{Rical} that this is not longer true for homogeneous spaces $M=G/\Delta K$ with $G=H\times\dots\times H$, $s$-times, $s\geq 3$.  

\begin{lemma}\label{gz1z2}
For any $z_1,z_2,x_1,x_2,x_3>0$, 
$$
\left(x_1,x_2,x_3\right)_{g_b} = \left(z_1x_1,z_2x_2,\frac{2z_1z_2x_3}{z_1+z_2}\right)_{\gk}.
$$
\end{lemma}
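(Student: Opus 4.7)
The plan is to verify the identity by evaluating both sides as inner products on an arbitrary tangent vector $v \in T_oM = \ggo/\Delta\kg$. Both $\pg = \pg(\gk)$ and $\pg(g_b)$ are identified with $T_oM$ via their respective reductive decompositions; since the subspaces $\pg_1, \pg_2$ coincide in both decompositions and only $\pg_3$ versus $\pg_3(g_b)$ differs, the crux is to understand how a vector is reparametrized when passing between the two.

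First I would express $v$ in both complements. Writing the $\pg$-representative as $v_\pg = (Q_1,0)+(0,Q_2)+(U,-U)$ with $Q_1, Q_2 \in \qg$ and $U \in \kg$, the $\pg(g_b)$-representative must have the form $v_{\pg(g_b)} = v_\pg + (W,W)$ for the unique $W \in \kg$ making it $g_b$-orthogonal to $\Delta\kg$. Since $\qg$ is $\kil_\hg$-orthogonal to $\kg$, this orthogonality condition reduces to $z_1(U+W) + z_2(-U+W) = 0$ in $\kg$, giving $W = \tfrac{z_2-z_1}{z_1+z_2}U$. Hence the $\pg_3(g_b)$-component of $v_{\pg(g_b)}$ is $\bigl(Z,-\tfrac{z_1}{z_2}Z\bigr)$ with $Z = \tfrac{2z_2}{z_1+z_2}U$, while the $\pg_1$ and $\pg_2$ components are still $(Q_1,0)$ and $(0,Q_2)$.

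Next I would substitute into the definitions and compare. Evaluating $(x_1, x_2, x_3)_{g_b}$ on $v_{\pg(g_b)}$, the $\pg_1$ and $\pg_2$ contributions yield $z_1 x_1(-\kil_\hg)(Q_1,Q_1)$ and $z_2 x_2(-\kil_\hg)(Q_2,Q_2)$, while the $\pg_3(g_b)$ contribution, using $(-\kil_\hg)(Z,Z) = \tfrac{4z_2^2}{(z_1+z_2)^2}(-\kil_\hg)(U,U)$ together with the two weights coming from the $\hg\oplus\hg$ structure, collapses to $\tfrac{4z_1z_2 x_3}{z_1+z_2}(-\kil_\hg)(U,U)$. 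Evaluating $(z_1x_1, z_2x_2, \tfrac{2z_1z_2 x_3}{z_1+z_2})_{\gk}$ directly on $v_\pg$, and using $(-\kil_\ggo)((U,-U),(U,-U)) = 2(-\kil_\hg)(U,U)$, produces the same expression term by term, which finishes the proof.

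The main technical point is the reparametrization step: identifying $W$ correctly and then rewriting the resulting $\pg_3(g_b)$-component in the canonical form $(Z,-\tfrac{z_1}{z_2}Z)$. Once that change of coordinates is in hand, the rest is just coefficient matching: the factors $z_1 x_1$ and $z_2 x_2$ in the first two slots record the weights of $g_b$ on $\pg_1$ and $\pg_2$, while the harmonic-mean factor $\tfrac{2z_1z_2}{z_1+z_2}$ in the third slot arises from combining those weights with the explicit basis change on the $\pg_3$-piece.
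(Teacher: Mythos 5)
Your proof is correct and follows essentially the same route as the paper's: solving for the $\Delta\kg$-correction $(W,W)$ and rewriting the third component as $\bigl(Z,-\tfrac{z_1}{z_2}Z\bigr)$ with $Z=\tfrac{2z_2}{z_1+z_2}U$ is exactly the paper's computation of the $g_b$-orthogonal projection $T(Z,-Z)=\tfrac{2z_2}{z_1+z_2}\bigl(Z,-\tfrac{z_1}{z_2}Z\bigr)$, and the subsequent coefficient matching is identical. Like the paper's own proof, you read the weights in $(x_1,x_2,x_3)_{g_b}$ as taken with respect to $g_b$ on each block (rather than with respect to $-\kil_\ggo$, as the displayed definition in \S\ref{normal-sec} literally states), which is the convention under which the lemma holds.
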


\begin{proof} 
Let $\ggo=\Delta\kg\oplus\pg(g_b)$ and $\ggo=\Delta\kg\oplus\pg$ be the orthogonal reductive decompositions with respect to $g_b$ and $\gk$, respectively, and let $\hat{g}$ denote the $\Ad(K)$-invariant inner product on $\pg$ identified with the metric $g:=\left(x_1,x_2,x_3\right)_{g_b}\in\mca^G$.  If $T:\ggo\rightarrow\pg(g_b)$ is the $g_b$-orthogonal projection, then 
\begin{equation}\label{gz1}
\hat{g}(X,Y) = g(TX,TY), \qquad\forall X,Y\in\pg.  
\end{equation}
Using that $g_b|_{\Delta\kg}=\tfrac{z_1+z_2}{2}(-\kil_\ggo)|_{\Delta\kg}$ and the orthogonal decompositions 
$$
\pg=\pg_1\oplus\pg_2\oplus\pg_3, \qquad  \pg(g_b)=\pg_1\oplus\pg_2\oplus\pg_3(g_b),
$$
where $\pg_3:=\{(Z,-Z):Z\in\kg\}$, we obtain that $T|_{\pg_1\oplus\pg_2}=I$ and 
$$
T(Z,-Z)=\tfrac{2z_2}{z_1+z_2}(Z,-\tfrac{z_1}{z_2}Z), \qquad\forall Z\in\kg,
$$
since 
$$
(Z,-Z) = \tfrac{z_1-z_2}{z_1+z_2}(Z,Z) + 
\tfrac{2z_2}{z_1+z_2}(Z,-\tfrac{z_1}{z_2}Z).  
$$
It now follows from \eqref{gz1} that $\pg_1$, $\pg_2$ and $\pg_3$ are $\hat{g}$-orthogonal, that is, $\hat{g}=\left(y_1,y_2,y_3\right)_{\gk}$, 
\begin{align*}
y_1\gk(X,X)=\hat{g}(X,X) =& x_1g_b(X,X) = x_1z_1\gk(X,X), \qquad\forall X\in\pg_1,\\ 
y_2\gk(X,X)=\hat{g}(X,X) =& x_2g_b(X,X) = x_2z_2\gk(X,X), \qquad\forall X\in\pg_2, 
\end{align*}
and for any $X=(Z,-Z)\in\pg_3$, $Z\in\kg$,
\begin{align*}
\hat{g}(X,X) =& g(TX,TX) = \tfrac{4z_2^2}{(z_1+z_2)^2}x_3g_b\left((Z,-\tfrac{z_1}{z_2}Z),(Z,-\tfrac{z_1}{z_2}Z)\right) \\ 
=&\tfrac{4z_2^2}{(z_1+z_2)^2}x_3\left(z_1+z_2\tfrac{z_1^2}{z_2^2}\right)\gk(Z,Z)
=\tfrac{4z_1z_2}{z_1+z_2}x_3\gk(Z,Z).
\end{align*}
On the other hand, 
$$
\hat{g}(X,X) = y_3\gk(X,X) = 2y_3\gk(Z,Z),
$$
which gives $y_3=\tfrac{2z_1z_2}{z_1+z_2}x_3$, concluding the proof.  
\end{proof}

\begin{proposition}\label{normal-E}
A normal metric on $M=H\times H/\Delta K$ is never Einstein.  
\end{proposition}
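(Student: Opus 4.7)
The plan is to convert the normal metric into diagonal form via Lemma \ref{gz1z2} and then show that the Einstein condition is incompatible with the harmonic-mean relation that $x_3$ must satisfy. Concretely, the normal metric determined by $g_b=z_1(-\kil_{(\hg,0)})+z_2(-\kil_{(0,\hg)})$ equals the diagonal metric $\left(z_1,z_2,\tfrac{2z_1z_2}{z_1+z_2}\right)_{\gk}$ by Lemma \ref{gz1z2}, so I may apply Proposition \ref{ricHHK}.

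The first step is to deduce that the standing assumption \eqref{assum} automatically holds whenever the normal metric is Einstein. From Proposition \ref{ricHHK}(i)--(ii), Einstein on $\pg_1$ and $\pg_2$ requires either $\cas_\chi=\kappa I_\qg$ or that the coefficients $\tfrac{1}{2x_i}\bigl(1-\tfrac{x_3}{2x_i}\bigr)$ vanish for $i=1,2$; the latter forces $x_3=2x_1$ and $x_3=2x_2$, and substituting $x_3=\tfrac{2z_1z_2}{z_1+z_2}$ yields $z_1=z_2=0$, a contradiction. Similarly, from Proposition \ref{ricHHK}(iv), uniformity of the eigenvalues $r_{3,l}$ across the various ideals $\kg_l$ requires either all $a_l$ to coincide, or the coefficient $\tfrac{1}{2x_3}-\tfrac{x_3}{8}\bigl(\tfrac{1}{x_1^2}+\tfrac{1}{x_2^2}\bigr)$ to vanish. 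A direct computation shows this coefficient simplifies to $\tfrac{1}{2(z_1+z_2)}>0$ at our normal metric; hence all $a_l$ agree, ruling out a nontrivial center of $\kg$ coexisting with simple ideals and giving $\kil_\kg=a\kil_\hg|_\kg$.

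Once assumption \eqref{assum} is in place, I invoke Corollary \ref{rics2str}. Computing $r_1-r_2$ with $x_1=z_1$, $x_2=z_2$, $x_3=\tfrac{2z_1z_2}{z_1+z_2}$ one finds that the $\kappa$-contribution cancels precisely because of the harmonic-mean form of $x_3$, and the identity
$$
r_1-r_2=\frac{z_2-z_1}{4z_1z_2}
$$
drops out after a short manipulation. This is nonzero whenever $z_1\neq z_2$, contradicting the Einstein condition.

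The remaining case $z_1=z_2$ reduces the normal metric to a positive multiple of the standard metric $\gk$, and Remark \ref{gB-E} (verified against Tables \ref{table1}--\ref{table2-ss}) states that $\gk$ is never Einstein on $M=H\times H/\Delta K$, closing the proof. I do not anticipate any genuine obstacle: the computation of $r_1-r_2$ is brief, and the only point requiring care is the preliminary case analysis that justifies applying Corollary \ref{rics2str} in the first place.
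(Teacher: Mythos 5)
Your proposal is correct and follows essentially the same route as the paper: reduce to the diagonal form $(z_1,z_2,\tfrac{2z_1z_2}{z_1+z_2})_{\gk}$ via Lemma \ref{gz1z2}, derive the necessary conditions $\cas_\chi=\kappa I_\qg$ and $\kil_\kg=a\kil_\hg|_\kg$ from Proposition \ref{ricHHK}, show $r_1=r_2$ forces $z_1=z_2$, and conclude via the impossibility of $a=\kappa$ (Remark \ref{gB-E} and the tables). Your computations of the coefficient $\tfrac{1}{2(z_1+z_2)}$ and of $r_1-r_2=\tfrac{z_2-z_1}{4z_1z_2}$ check out, and your preliminary case analysis is in fact slightly more careful than the paper's.
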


\begin{remark}
In \cite{Rical}, it is shown that normal metrics are never Einstein on the much larger class of all aligned homogeneous spaces, i.e., $M=G_1\times\dots\times G_s/K$ with $G_i$ simple such that $b_3(M)=s-1$.  
We do not know the answer to the following natural question: Are there non-standard normal Einstein metrics other than the ones described on $H\times K/\Delta K$ at the beginning of the section?
\end{remark}

\begin{proof}
According to Proposition \ref{ricHHK} and Lemma \ref{gz1z2}, if the normal metric 
\begin{equation}\label{gb}
g_b=\left(z_1,z_2,\tfrac{2z_1z_2}{z_1+z_2}\right)_{\gk}
\end{equation} 
is Einstein, then $a_1=\dots=a_t=:a$ for some $a\geq 0$, i.e., $\kil_\kg=a\kil_\hg|_\kg$ (since the factor multiplying $a_l$ in the formula for $r_{3,l}$ is nonzero), and $\cas_\chi=\kappa I_\qg$ for some $\kappa\in\RR$ (note that $\kg$ is abelian if and only if $a=0$).  Now $r_1=r_2$ implies that $z_1=z_2$ and from $r_1=r_3$ it follows that $a=\kappa$, which is impossible by an easy inspection of 
Tables \ref{table1}-\ref{table2-ss} (cf.\ Remark \ref{gB-E}).
\end{proof}

It follows from \eqref{gb} that the automorphism of $G$ interchanging the copies of $H$ determines an isometry between the two normal metrics $g_b(z_1,z_2)$ and $g_b(z_2,z_1)$.  

The non-existence of Einstein metrics in the $2$-dimensional submanifold $\mca^{norm}\subset\mca^G$ of all normal metrics on $M=H\times H/\Delta K$ motivates the following natural questions: how does the scalar curvature function $\scalar$ behaves on $\mca^{norm}$?; is the standard metric $\gk$ an special point?; is $\mca^{norm}$ invariant under the Ricci flow?

\begin{lemma}\label{scal-normal}
The scalar curvature of a normal metric $g_b=\left(z_1,z_2,\tfrac{2z_1z_2}{z_1+z_2}\right)_{\gk}$ on $M=H\times H/\Delta K$ is given by 
$$
\scalar(g_b)= \frac{(d+n)z_1^2+2(2d+n-S)z_1z_2+(d+n)z_2^2}{4z_1z_2(z_1+z_2)}, \qquad S:=\sum_{l=1}^t a_ld_l, 
\quad d_l:=\dim{\kg_l}, 
$$
and the normalized scalar curvature $\scalar_N(g_b):=\scalar(g_b)\left(\det_{\gk}{g_b}\right)^{\frac{1}{\dim{M}}}$ by 
$$
\scalar_N(g_b)=\frac{(d+n)z_1^2+2(2d+n-S)z_1z_2+(d+n)z_2^2}{4z_1z_2(z_1+z_2)} z_1^{\tfrac{n}{2n+d}} z_2^{\tfrac{n}{2n+d}} \left(\frac{2z_1z_2}{z_1+z_2}\right)^{\tfrac{d}{2n+d}}. 
$$
\end{lemma}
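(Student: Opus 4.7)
The strategy is to compute $\scalar(g_b)$ as the trace of $\Ricci(g_b)$ on $\pg$, using the orthogonal decomposition $\pg=\pg_1\oplus\pg_2\oplus\pg_3^0\oplus\dots\oplus\pg_3^t$ provided by \eqref{gBdec} and the refinement \eqref{decs}. First I would apply Lemma \ref{gz1z2} to rewrite the normal metric $g_b$ as the diagonal metric $\left(z_1,z_2,\tfrac{2z_1z_2}{z_1+z_2}\right)_{\gk}$, so that Proposition \ref{ricHHK} becomes directly applicable; this reduces the entire problem to summing explicit expressions in $z_1,z_2$.

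Setting $x_1=z_1$, $x_2=z_2$, $x_3=\tfrac{2z_1z_2}{z_1+z_2}$, the key simplifications are
$$
1-\tfrac{x_3}{2x_1}=\tfrac{z_1}{z_1+z_2}, \qquad 1-\tfrac{x_3}{2x_2}=\tfrac{z_2}{z_1+z_2}, \qquad \tfrac{1}{2x_3}-\tfrac{x_3}{8}\bigl(\tfrac{1}{z_1^2}+\tfrac{1}{z_2^2}\bigr)=\tfrac{1}{2(z_1+z_2)},
$$
which I would obtain by elementary algebra. With these identities, $\tr\bigl(\Ricci(g_b)|_{\pg_i}\bigr)$ for $i=1,2$ becomes $\tfrac{\tr(\cas_\chi)}{2(z_1+z_2)}+\tfrac{n}{4z_i}$, and $\tr\bigl(\Ricci(g_b)|_{\pg_3^l}\bigr)=d_l\,r_{3,l}$ reduces to $\tfrac{a_l d_l}{2(z_1+z_2)}+\tfrac{d_l(z_1^2+z_2^2)}{4z_1z_2(z_1+z_2)}$.

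Next I would compute $\tr(\cas_\chi)$ directly from the definition. For $Z\in\kg_l$ one has $\ad Z|_\hg=\ad Z|_\kg\oplus\ad Z|_\qg$ thanks to $[\kg,\qg]\subset\qg$, and therefore
$$
-\tr\bigl((\ad Z|_\qg)^2\bigr)=-\kil_\hg(Z,Z)+\kil_{\kg_l}(Z,Z)=(1-a_l)\bigl(-\kil_\hg(Z,Z)\bigr).
$$
Summing over a $(-\kil_\hg|_\kg)$-orthonormal basis of $\kg$ yields $\tr(\cas_\chi)=\sum_{l=0}^t(1-a_l)d_l=d-S$ (using $a_0=0$). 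Plugging this in and combining all terms over the common denominator $4z_1z_2(z_1+z_2)$ produces the stated formula for $\scalar(g_b)$ after collecting the $z_1^2$, $z_2^2$ and $z_1z_2$ coefficients.

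For the normalized scalar curvature, I would observe that, viewed through Lemma \ref{gz1z2}, the operator representing $g_b$ relative to $\gk$ is diagonal with eigenvalues $z_1$ on $\pg_1$ (multiplicity $n$), $z_2$ on $\pg_2$ (multiplicity $n$) and $\tfrac{2z_1z_2}{z_1+z_2}$ on $\pg_3$ (multiplicity $d$), so $\dete_{\gk}(g_b)=z_1^n z_2^n\bigl(\tfrac{2z_1z_2}{z_1+z_2}\bigr)^d$; raising to the power $\tfrac{1}{2n+d}$ and multiplying gives the claimed expression. The only conceptually non-routine step is the Casimir trace identity $\tr(\cas_\chi)=d-S$, which is needed in the full generality where $H/K$ is not assumed isotropy irreducible and hence $\cas_\chi$ is not a priori a scalar operator; everything else is bookkeeping.
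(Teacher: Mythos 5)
Your proposal is correct and follows essentially the same route as the paper: rewrite the normal metric as the diagonal metric $\left(z_1,z_2,\tfrac{2z_1z_2}{z_1+z_2}\right)_{\gk}$ via Lemma \ref{gz1z2}, trace the Ricci operator from Proposition \ref{ricHHK} over $\pg_1\oplus\pg_2\oplus\pg_3^0\oplus\dots\oplus\pg_3^t$ using the same algebraic simplifications, and apply $\tr{\cas_\chi}=\sum_{l}(1-a_l)d_l=d-S$. The only (harmless) difference is that you derive this Casimir trace identity from scratch, whereas the paper simply cites it as \eqref{trcas} from \cite{stab}.
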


\begin{proof}
Using Proposition \ref{ricHHK} and the fact that 
\begin{equation}\label{trcas}
\tr{\cas_{\chi}}=\sum_{l=0}^t (1-a_l)d_l
\end{equation} 
(see \eqref{cas} and \cite[(9)]{stab}), we obtain 
\begin{align*}
\scalar(g_b)
=& \tfrac{1}{2z_1}\left(1 - \tfrac{z_2}{z_1+z_2}\right) \sum_{l=0}^t (1-a_l)d_l + \tfrac{n}{4z_1} 
+\tfrac{1}{2z_2}\left(1 - \tfrac{z_1}{z_1+z_2}\right) \sum_{l=0}^t (1-a_l)d_l + \tfrac{n}{4z_2}\\ 
&+\left(\tfrac{z_1+z_2}{4z_1z_2}-\tfrac{z_1z_2}{4(z_1+z_2)}\left(\tfrac{1}{z_1^2} +\tfrac{1}{z_2^2}\right)\right) \sum_{l=1}^t d_la_l  
+d\tfrac{z_1z_2}{4(z_1+z_2)}\left(\tfrac{1}{z_1^2} + \tfrac{1}{z_2^2}\right) \\
=& \tfrac{1}{2(z_1+z_2)} \sum_{l=0}^t (1-a_l)d_l + \tfrac{n}{4z_1} 
+\tfrac{1}{2(z_1+z_2)}\sum_{l=0}^t (1-a_l)d_l + \tfrac{n}{4z_2}\\
&+\left(\tfrac{z_1+z_2}{4z_1z_2}-\tfrac{z_1^2+z_2^2}{4z_1z_2(z_1+z_2)}  \right) \sum_{l=1}^t d_la_l  
+d\tfrac{z_1^2+z_2^2}{4z_1z_2(z_1+z_2)}  \\ 
=& \tfrac{1}{z_1+z_2} \sum_{l=0}^t (1-a_l)d_l + \tfrac{n}{4z_1} + \tfrac{n}{4z_2} 
+\tfrac{1}{2(z_1+z_2)} \sum_{l=1}^t d_la_l 
+d\tfrac{z_1^2+z_2^2}{4z_1z_2(z_1+z_2)}  \\
=& d\tfrac{1}{z_1+z_2} + \tfrac{n}{4z_1} + \tfrac{n}{4z_2} 
-\tfrac{1}{2(z_1+z_2)}S  
+d\tfrac{z_1^2+z_2^2}{4z_1z_2(z_1+z_2)}  
= \tfrac{(d+n)z_1^2+2(2d+n-S)z_1z_2+(d+n)z_2^2}{4z_1z_2(z_1+z_2)},
\end{align*}
concluding the proof.  
\end{proof}

If $z_1=z$ and $z_2=\tfrac{1}{z}$, then it is straightforward to see that the function 
$$
f(z):=\scalar_N(g_b)=\tfrac{(d+n)z^4+2(2d+n-S)z^2+(d+n)}{4z(z^2+1)} \left(\tfrac{2z}{z^2+1}\right)^{\tfrac{d}{2n+d}} 
$$
has a unique critical point at $z=1$, i.e., at $\gk$, with second derivative 
$$
f''(1)=-\tfrac{(d+n)(d-2n-S)}{4n+2d}. 
$$
Since $d-S=\tr{\cas_\chi}<\unm n$ by \eqref{trcas} and \eqref{ricgB}, respectively, we have that $f''(1)>0$.  It is also easy to see that $f(z)$ converges to $\infty$ as $z\to 0,\infty$.  Thus the standard metric $\gk$ is a global minimum of 
$$
\scalar:\mca^{norm}_1\rightarrow\RR, 
$$ 
where $\mca^{norm}_1$ is the space of all unit volume normal metrics on $M=H\times H/\Delta K$.  Since $\gk$ is not Einstein, this implies that $\mca^{norm}$ is not invariant under the Ricci flow (see Figure \ref{fig}).

\begin{figure}\label{sec}
\includegraphics[width=49mm]{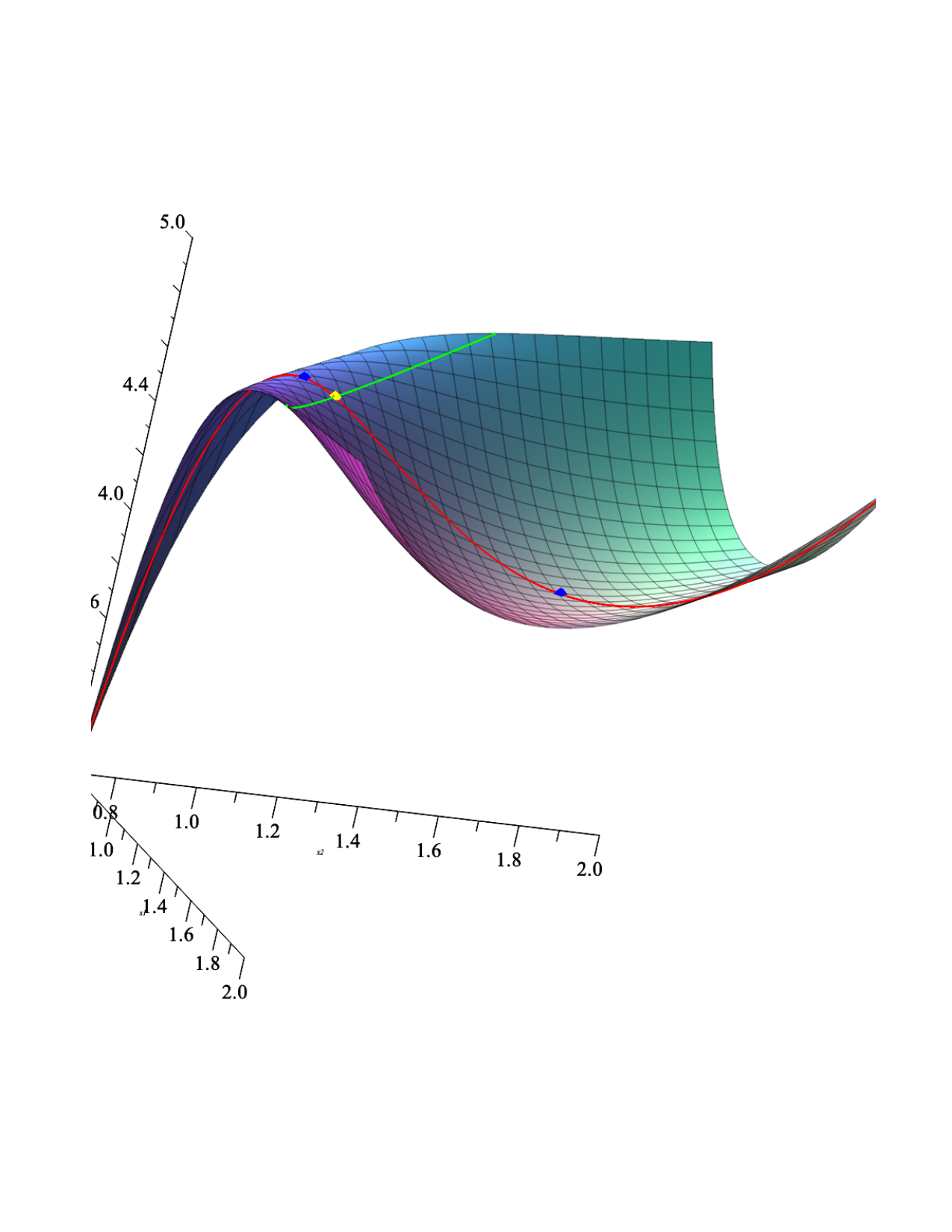}
\includegraphics[width=49mm]{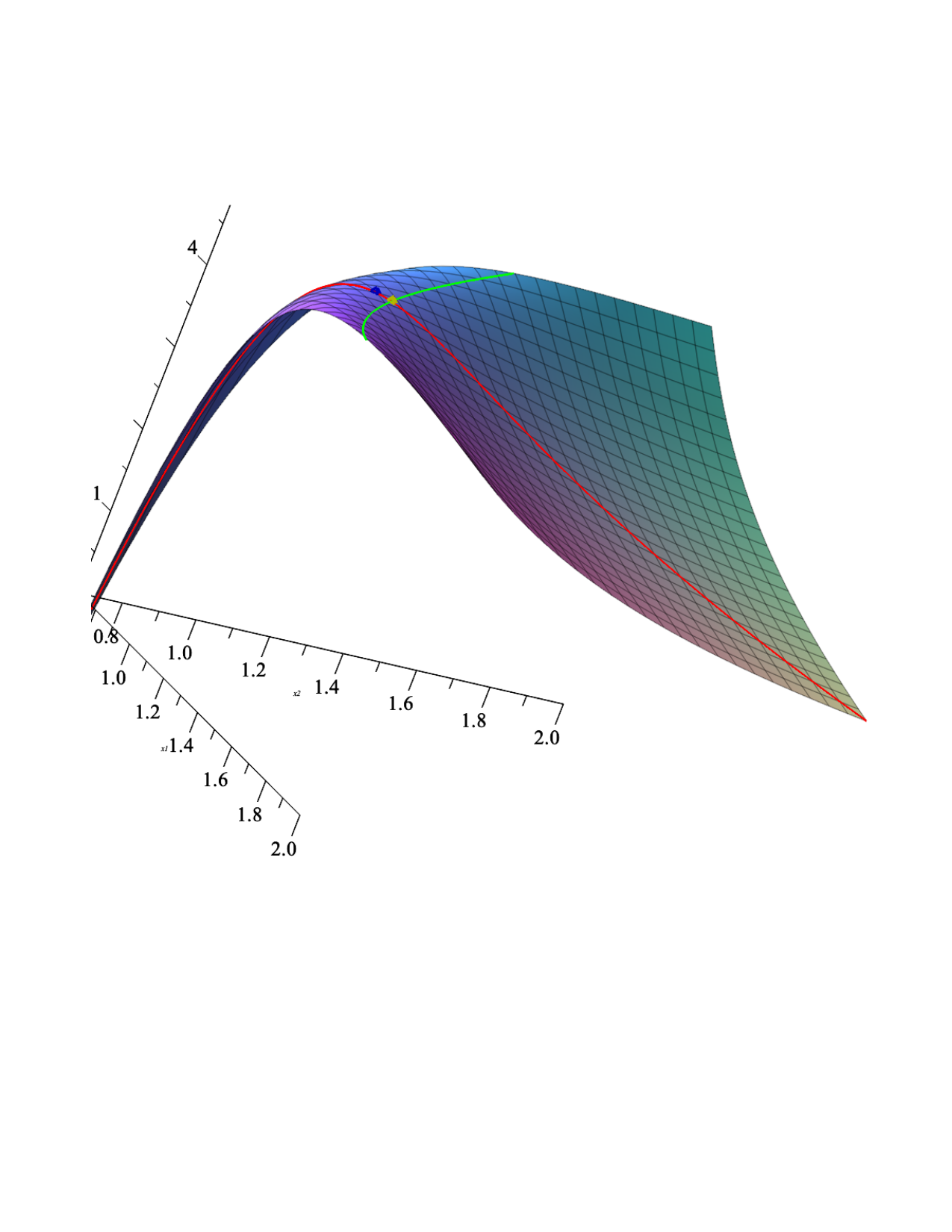}
\includegraphics[width=49mm]{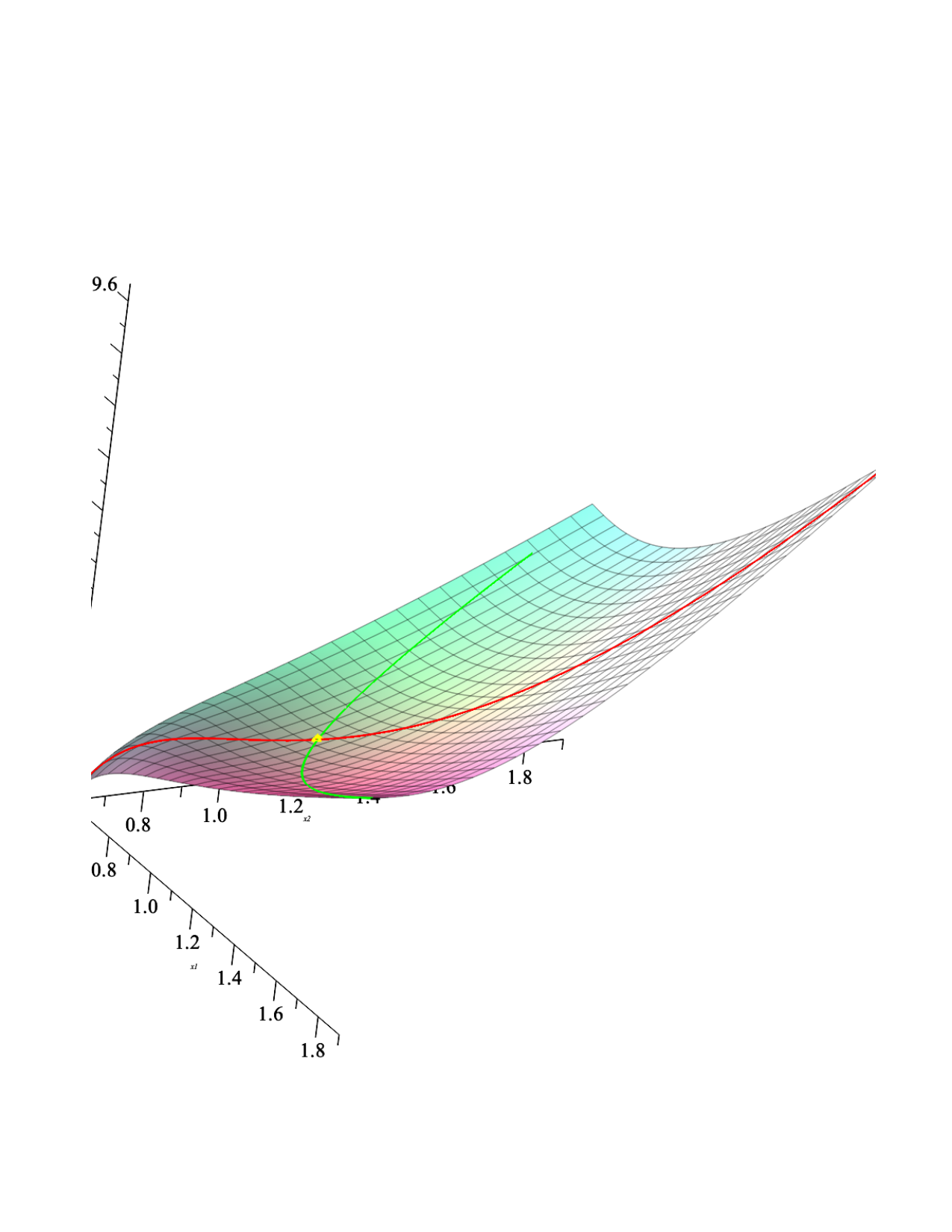}
\caption{Graph of $\scalar:\mca^{diag}_1\rightarrow\RR$ in the variables $(x_1,x_2)$ for, from left to right, $M^{13}=\SU(3)\times\SU(3)/\Delta\SO(3)$, $M^{14}=\SU(3)\times\SU(3)/\Delta T^2$ and $M^{20}=\SU(4)\times\SU(4)/\Delta\Spe(2)$, which admit two, one and none diagonal Einstein metrics (i.e., critical points, in blue), respectively.  The red curve is defined by  $x_1=x_2$ and it is invariant under the Ricci flow.  The standard metric $\gk$ ($x_1=x_2=1$) is in yellow and it is a global minimum of $\scalar: \mca^{norm}_1\rightarrow\RR$. Normal metrics give rise to the green curve.}\label{fig}
\end{figure}

\section{Diagonal Einstein metrics}\label{diag-sec} 

We study in this section the existence of $G$-invariant Einstein metrics on $M=G/\Delta K=H\times H/\Delta K$.   Recall that two Riemannian metrics are said to be {\it homothetic} if they are isometric up to scaling.  

\begin{theorem}\label{HHK-E}
On $M=H\times H/\Delta K$, the diagonal metric 
$
g=(x_1,x_2,1)_{\gk}
$ 
is Einstein if and only if $\cas_\chi=\kappa I_\qg$ for some $\kappa\in\RR$ and, 
\begin{enumerate}[{\rm (i)}] 
\item either $K$ is abelian and 
$$
x_1=x_2=\frac{\kappa+1}{2\kappa+1},
$$
\item or $K$ is semisimple, $\kil_\kg=a\kil_\hg|_\kg$ for some $a\in\RR$ and 
$$
x_1=x_2=\frac{2\kappa+1\pm\sqrt{(2\kappa+1)^2-8a(1-a+\kappa)}}{4a} =: x_\pm.
$$
If $g_\pm:=(x_\pm,x_\pm,1)_{\gk}$, then $g_+$ is not homothetic to $g_-$ (cf.\ Remark \ref{nonh}), 
$$
\ricci(g_\pm)=\rho^\pm g_\pm, \qquad\rho^\pm=\frac{(2\kappa+1)x_\pm - \kappa}{4(x_\pm)^{2}},
$$ 
and their normalized scalar curvature 
$\scalar_N(g_\pm) := \scalar(g_\pm)(\det_{\gk}{g_\pm})^{\frac{1}{\dim{M}}}$ 
are given by
$$
\scalar_N(g_\pm) = 
\frac{(2n+d)((2\kappa+1)x_\pm - \kappa)}{4(x_\pm)^{2\alpha}},  \qquad \alpha:=\frac{n+d}{2n+d}.
$$
\end{enumerate}
\end{theorem}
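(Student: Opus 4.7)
My plan is to impose the Einstein condition $\Ricci(g)=\rho\, g$ directly on $g=(x_1,x_2,1)_{\gk}$, using the Ricci eigenvalue formulas of Proposition~\ref{ricHHK}. Since $\Ricci(g)$ preserves the decomposition $\pg=\pg_1\oplus\pg_2\oplus\pg_3^0\oplus\dots\oplus\pg_3^t$, the Einstein condition amounts to requiring that the scalar eigenvalues $r_1,r_2,r_{3,0},\dots,r_{3,t}$ all coincide. The converse direction follows by direct verification, so the main content is the forward direction.

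First, I would extract the necessary structural conditions. The identity $\Ricci(g)|_{\pg_1}=\tfrac{1}{2x_1}\bigl(1-\tfrac{1}{2x_1}\bigr)\cas_{\chi}+\tfrac{1}{4x_1}I_{\pg_1}$ forces $\cas_{\chi}=\kappa I_\qg$ once the degenerate possibility $x_1=\tfrac{1}{2}$ is ruled out as inconsistent with $r_1=r_3$. Comparing the formulas for $r_{3,l}$ across different $l$ in Proposition~\ref{ricHHK}(iv), the Einstein condition forces either all $a_l$ to coincide or the auxiliary relation $\tfrac{1}{x_1^2}+\tfrac{1}{x_2^2}=4$; the latter, combined with $r_1=r_2=r_3=\tfrac{1}{2}$ and $\kappa\leq\tfrac{1}{2}$, produces a numerical contradiction. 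This places us in the setting of Corollary~\ref{rics2str}: $K$ is abelian or semisimple with $\kil_{\kg}=a\kil_{\hg}|_{\kg}$.

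With the closed-form expressions for $r_1,r_2,r_3$ from Corollary~\ref{rics2str}, the equation $r_1=r_2$ factors as
\[
(x_1-x_2)\bigl[(1+2\kappa)x_1x_2-\kappa(x_1+x_2)\bigr]=0.
\]
The technical core is to exclude the second factor. Substituting $(1+2\kappa)x_1x_2=\kappa(x_1+x_2)$ collapses $r_1$ to $\tfrac{\kappa}{4x_1x_2}$, and the equation $r_1=r_3$ then determines $p:=x_1x_2$ uniquely. Requiring in addition $(x_1+x_2)^2>4x_1x_2$, so that $x_1\neq x_2$, reduces to the inequality $(1+2\kappa)^2(\kappa+a-1)>8a\kappa^2$; substituting $\kappa=\tfrac{(1-a)d}{n}$ from \S\ref{Est-sec} and using $\kappa\leq\tfrac{1}{2}$, $a<1$, this inequality fails for the parameter values of the spaces in Tables~\ref{table1}--\ref{table2-ss}, so $x_1=x_2$. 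I expect this branch-elimination to demand the most careful bookkeeping.

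Once $x_1=x_2=:x$ is established, the remaining equation $r_1=r_3$ simplifies to the polynomial identity
\[
2a\,x^2-(1+2\kappa)\,x+(\kappa+1-a)=0.
\]
When $K$ is abelian ($a=0$) this is linear with unique positive root $x=\tfrac{\kappa+1}{2\kappa+1}$, yielding~(i). When $K$ is semisimple with $a>0$ the quadratic formula produces the two roots $x_\pm$ of~(ii); existence of real positive roots is equivalent to the non-negativity of the discriminant $(2\kappa+1)^2-8a(1-a+\kappa)$, matching condition~(c) of Theorem~\ref{thm2}. Non-homotheticity of $g_\pm$ follows because a homothety scales $x_1,x_2,x_3$ by a common factor while $g_\pm=(x_\pm,x_\pm,1)_{\gk}$ share $x_3=1$ yet have distinct values of $x_1=x_2$ (cf.\ Remark~\ref{nonh}). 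Finally, $\rho^\pm$ is read off by substituting $x=x_\pm$ into the formula for $r_1$, and the normalized scalar curvature follows from $\scalar(g_\pm)=(2n+d)\rho^\pm$ combined with $\det_{\gk}g_\pm=x_\pm^{2n}$ and the identity $\tfrac{2n}{2n+d}=2-2\alpha$.
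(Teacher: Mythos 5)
Your route is essentially the paper's: force $\cas_\chi=\kappa I_\qg$, force the $a_l$'s to coincide via the $\tfrac{1}{x_1^2}+\tfrac{1}{x_2^2}=4$ dichotomy, factor $r_1=r_2$, solve $2ax^2-(1+2\kappa)x+(1-a+\kappa)=0$ on the branch $x_1=x_2$, and read off $\rho^\pm$ and $\scalar_N(g_\pm)$; moreover your reduction of the other branch to $(1+2\kappa)^2(\kappa+a-1)>8a\kappa^2$ is equivalent to the paper's negative-discriminant condition, since $(1+2\kappa)^2(\kappa+a-1)-8a\kappa^2=4\kappa^3-4a\kappa^2+4a\kappa-3\kappa+a-1$ is exactly the factor in the paper's $b^2-4dc$ (up to the positive factor $4\kappa^2(\kappa+1-a)$). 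The genuine gap is your treatment of non-homotheticity. Saying that a homothety ``scales $x_1,x_2,x_3$ by a common factor'' only shows that $g_+$ and $g_-$ are not proportional as tensors; homothetic means isometric up to scaling, and an isometry need not preserve the parametrization $(x_1,x_2,x_3)_{\gk}$ at all. Remark \ref{nonh}, which you cite, does not assert this either: it treats only $G$-equivariant isometries via the $N_G(\Delta K)$-action, and even there the conclusion rests on the later stability analysis. The paper instead compares the homothety invariant $\scalar_N$: it shows $\scalar_N(g_+)\neq\scalar_N(g_-)$ by noting that $s(x):=\scalar_N\big((x,x,1)_{\gk}\big)$ has $x_\pm$ as its only critical points, hence is strictly monotone on $[x_-,x_+]$, so $s(x_-)\neq s(x_+)$. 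Some argument of this type (or another genuine invariant) must be supplied; as written, this part of your proposal proves nothing.

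A second, smaller issue is the branch elimination itself. The constraints $0<\kappa\le\unm$, $0\le a<1$ alone do \emph{not} make $(1+2\kappa)^2(\kappa+a-1)>8a\kappa^2$ fail: for instance $a=0.99$, $\kappa=0.3$ satisfies it. Also note that $\kappa+a-1<0$ is equivalent to $d<n$, and many spaces in the tables have $d\ge n$ (e.g.\ $\SO(7)/G_2$, $F_4/\SO(9)$, $\Spe(2m)/\Spe(m)^2$), so the exclusion really does require the finer relation between $a$, $d$ and $n$. Your plan to check the inequality against the tables is viable (it does fail for every listed space), but it is asserted rather than carried out; a uniform alternative is to combine $\kappa=\tfrac{(1-a)d}{n}$ with the D'Atri--Ziller bound $a<\tfrac{n+2d}{2(n+d)}$ (already invoked in Remark \ref{E2z-rem5}), which gives $\tfrac{d}{n}<\tfrac{2\kappa}{1-2\kappa}$ when $\kappa<\unm$ and then the inequality fails identically (the case $\kappa=\unm$ being immediate). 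With that verification written out and the non-homotheticity argument replaced by the $\scalar_N$ comparison, your proof matches the paper's.
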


\begin{remark}\label{E2z-rem}
We therefore obtain that if $K$ is semisimple, then there exist exactly two diagonal Einstein metrics if and only if  
\begin{equation}\label{cond2} 
(2\kappa+1)^2 > 8a(1-a+\kappa),
\end{equation} 
there is only one (i.e., $x_+=x_-$) if equality holds and none otherwise (see Figure \ref{fig}).  In the last column of Tables \ref{table1}-\ref{table2-ss} given in Appendix \ref{table-sec} is indicated whether this condition holds or not for each suitable homogeneous space $H/K$.  It turns out that among the $17$ families and $50$ individual examples of possibilities, only a few of them do not satisfy the existence condition \eqref{cond2}, providing a great amount of homogeneous Einstein metrics.  In the tables, the check mark $\checkmark$ means that \eqref{cond2} holds and `No' means that the opposite inequality holds; equality holds in only two cases: $\Spe(24)/\Spe(8)^3$ and $\Spe(12)/\Spe(3)^4$ (see Table \ref{table2-ss}).  
\end{remark}

\begin{remark}\label{E2z-rem8}
In the case when $H/K$ is isotropy irreducible and $K$ is simple (i.e., $\pg_1$, $\pg_2$ and $\pg_3$ are $\Ad(K)$-irreducible), it follows from \eqref{pipj} that the only proper intermediate Lie subalgebras between $\Delta\kg$ and $\ggo=\hg\oplus\hg$ are 
$$
\Delta\hg \quad\mbox{and}\quad \kg\oplus\kg=\Delta\kg\oplus\pg_3\subset \Delta\kg\oplus\pg_1\oplus\pg_3, \Delta\kg\oplus\pg_2\oplus\pg_3.
$$
This implies that the graph attached to $G/\Delta K$ as in \cite{BhmWngZll} whose vertices are (connected components of) intermediate subalgebras and whose edges correspond to inclusions between two subalgebras is disconnected.  The existence of a non-diagonal $G$-invariant Einstein metric on these spaces for which condition \eqref{cond2} does not hold is therefore predicted by the Graph Theorem (see \cite[Theorem C]{BhmWngZll}).  This will be studied in \S\ref{HHKsym-sec}.  
\end{remark}

\begin{remark}\label{E2z-rem3}
Since $\kappa=\tfrac{d(1-a)}{n}$ by \eqref{kap}, where $d=\dim{K}$ and $n=\dim{H/K}$, the solutions $x_\pm$ only depend on $a$, $d$ and $n$ (or $a$, $\dim{H}$ and $\dim{K}$).  Recall that $0<\kappa\leq\unm$ and $0<a<1$.  
\end{remark}

\begin{remark}\label{E2z-rem6}
It can also be checked in the tables that $a<\kappa$ for most of the spaces satisfying the existence condition \eqref{cond2}, with the only exceptions of $\SO((m-1)(2m+1))/\Spe(m)$ and $\Spe(mk)/\Spe(k)^m$ in Tables \ref{table3-fam2} and \ref{table2-ss}, respectively.  On the other hand, the inequality
\begin{equation}\label{cond4} 
a+\kappa<1 \qquad (\mbox{if and only if}\quad d<n)
\end{equation} 
always holds for such spaces since either $a<\kappa\leq \unm$ or it can be easily checked that it holds for the two exceptions.  
\end{remark}

\begin{remark}\label{E2z-rem5}
Using that $\kappa=\tfrac{d(1-a)}{n}$, it is straightforward to show that condition \eqref{cond2} becomes 
\begin{equation}\label{cond3} 
q(a):=4(2n^2+2nd+d^2)a^2 - 4(2n^2+3nd+2d^2)a + (n+2d)^2 > 0,
\end{equation} 
which is equivalent to $0<a<\alpha_1$ or $\alpha_2<a<1$, where $0<\alpha_1<\alpha_2<1$ are the roots of the quadratic polynomial $q$.  But since $a< \tfrac{n+2d}{2n+2d}$ (see \cite[Theorem 11, pp.35]{DtrZll}) and
$q(\tfrac{n+2d}{2n+2d})= -\frac{n^3 (n+2d)}{(n+d)^2}<0$, we obtain that $0<a<\alpha_1$ always holds if $a$ satisfies condition \eqref{cond3}.  Unfortunately, the number $\alpha_1(n,d)$ has a very long expression.      
\end{remark}

\begin{remark}\label{E2z-rem7}
Since $x_\pm$ are the roots of $f(x)=2ax^2-(2\kappa+1)x+1-a+\kappa$ and $f(1)=a-\kappa$ (assume that $x_-<x_+$), we obtain that 
$$
x_- < \tfrac{2\kappa+1}{4a} < x_+ \le \tfrac{2\kappa+1}{2a} \qquad 
\mbox{and} \qquad  x_-<1<x_+, \quad\mbox{if}\quad a<\kappa, 
$$
see Remark \ref{E2z-rem6}.  For the two exceptions with $a>\kappa$, it is easy to check that  $x_-<x_+<1$ for $\SO((m-1)(2m+1))/\Spe(m)$ and $1<x_-<x_+$ for $\Spe(mk)/\Spe(k)^m$.  On the other hand, by the existence condition \eqref{cond2}, 
\begin{align*} 
f\left(\tfrac{2(1-a+\kappa)}{2\kappa+1}\right) 
 = 2a \tfrac{4(1-a+\kappa)^2}{ (2\kappa+1)^2 }  - (1-a+ \kappa) 
 =  \tfrac{(1-a+ \kappa)}{ (2\kappa+1)^2 } \left(  8a (1-a+\kappa) - (2\kappa+1)^2 \right)  < 0,
\end{align*}
hence 
\begin{equation}\label{xmxm}
x_- < \frac{2(1-a+\kappa)}{2\kappa+1}  <  x_+.
\end{equation}
\end{remark}

\begin{remark}\label{E2z-rem2}
In the case when $H/K$ is a symmetric space we have that $\kappa=\unm$ and so  
$$
x_\pm
=\tfrac{1\pm\sqrt{1-a\left(3-2a\right)}}{2a},
$$
which implies that the existence of two diagonal Einstein metrics is equivalent to $a<\unm$ (see Table \ref{table1}).   
\end{remark}

\begin{remark}\label{E2z-rem4}
The Einstein metrics obtained are a positive multiple of the metric $g=(1,1,x_3)_{\gk}$, where 
$$
x_3:=\tfrac{2\kappa+1\pm \sqrt{(2\kappa+1)^2 - 8a(1-a+\kappa)}}{2(1-a+\kappa)}, 
$$
which is giving by $x_3=\tfrac{2\kappa+1}{\kappa+1}$ in the abelian case and by 
$$
x_3=\tfrac{2\left(1\pm \sqrt{1 - a(3-2a)}\right)}{3-2a},
$$
in the case when $H/K$ is a symmetric space. 
\end{remark}

\begin{proof}[Proof of Theorem \ref{HHK-E}]
Assume that $g$ is Einstein.  It follows from Proposition \ref{ricHHK}, (i) and (ii) that $\cas_{\chi}=\kappa I_{\qg}$ for some $\kappa>0$.  Moreover, we obtain the following formulas for the Ricci eigenvalues $r_1,r_2,r_{3,0},\dots,r_{3,t}$ of $g$ on $\pg_1,\pg_2,\pg_3^0,\dots,\pg_3^t$, respectively: 
$$
r_1
= \tfrac{1+2\kappa}{4}\tfrac{1}{x_1} - \tfrac{\kappa}{4}\tfrac{1}{x_1^2}, \qquad 
r_2 
= \tfrac{1+2\kappa}{4}\tfrac{1}{x_2} - \tfrac{\kappa}{4}\tfrac{1}{x_2^2}, \qquad
r_{3,l} := a_l\left(\tfrac{1}{2}-\tfrac{1}{8}\left(\tfrac{1}{x_1^2} +\tfrac{1}{x_2^2}\right)  \right)  
+\tfrac{1}{8}\left(\tfrac{1}{x_1^2} + \tfrac{1}{x_2^2}\right).
$$
The factor multiplying $a_l$ in the formula for $r_{3,l}$ vanishes if and only if 
\begin{equation}\label{par}
\tfrac{1}{x_1^2} +\tfrac{1}{x_2^2}=4,
\end{equation} 
in which case equation $r_2=r_{3,l}$ is equivalent to 
$$
\tfrac{(1+2\kappa) x_2 - \kappa}{4x_2^2}   = \unm.
$$
This implies that either $x_2=\tfrac{1}{2}$ or $x_2 = \kappa\leq\unm$, which contradicts \eqref{par} since both alternatives give that $\tfrac{1}{x_1^2} +\tfrac{1}{x_2^2}>4$.  We therefore obtain that either $K$ is abelian (i.e., $\kg=\kg_0$) or $K$ is semisimple and $a_1=\dots=a_t=:a$.    

On the other hand, since $r_1=r_2$ if and only if 
$$
\tfrac{1+2\kappa}{4}\left(\tfrac{1}{x_1} - \tfrac{1}{x_2}\right)  =  \tfrac{\kappa}{4}\left(\tfrac{1}{x_1^2}-\tfrac{1}{x_2^2}\right), 
$$
we obtain that either $x_1=x_2$ or 
$$
\tfrac{1}{x_1}=\tfrac{1+2\kappa}{\kappa} - \tfrac{1}{x_2} = \tfrac{(1+2\kappa)x_2-\kappa}{\kappa x_2}. 
$$
If $x_1=x_2$, then $r_2=r_3$ if and only if
$$
 \kappa+1-a - (1+2\kappa)x_1+ 2ax_1^2 = 0,
$$
whose roots are $x^\pm$.  Otherwise, if $x_1=\tfrac{\kappa x_2}{(1+2\kappa)x_2-\kappa}$, then it is straightforward to see that $r_2=r_3$ if and only if $x_2$ is a root of the quadratic polynomial $d x^2 + b x+ c$, where
$$
d:=a(4 \kappa +1) - (2\kappa+1)^2, \quad 
b := 2 \kappa (2\kappa+1) (\kappa-a+1), \quad
c:= - 2 \kappa^2 (\kappa-a+1).
$$ 
Since 
$$ 
b^2-4dc = 4\kappa^2(\kappa - a + 1) (4\kappa^3 - 4a\kappa^2 + 4a\kappa - 3\kappa + a - 1)
$$
and 
$$ 
4\kappa^3 - 4a\kappa^2 + 4a\kappa - 3\kappa + a - 1
= 2 \kappa (2 \kappa^2  - 1) + 4a\kappa (  \kappa -1)  - (\kappa - a + 1) <0,
$$
we conclude that no solutions arise in this case.   

Finally, we compute the normalized scalar curvature: if $x=x^\pm$, then 
\begin{align*}
\scalar_N(g_\pm) =& 
\dim{M}\, r_1\,x^{\tfrac{2\dim{\qg}}{\dim{M}}}
=(2n+d)\left(\tfrac{1+2\kappa}{4}\tfrac{1}{x} - \tfrac{\kappa}{4}\tfrac{1}{x^2}\right) x^{\tfrac{2n}{2n+d}} \\ 
=& \tfrac{2n+d}{4}((1+2\kappa)x - \kappa) x^{\tfrac{2n}{2n+d}-2} 
=\tfrac{2n+d}{4}((1+2\kappa)x - \kappa)x^{-2\alpha}.   
\end{align*}
In order to show that $\scalar_N(g_+)\ne\scalar_N(g_-)$, which implies that $g_+$ and $g_-$ are not homothetic, we consider the function 
$$
s(x):=\scalar_N((x,x,1)_{\gk}) =\unc(2adx^2 + (4d(1-a) +2n) x-(1-a)d) x^{-2\alpha}, \qquad x>0, 
$$
whose derivative is given by 
$$
s'(x)=\tfrac{2anx^2 + (2d(1-a) - n) x-(1-a)(n+d)}{2(2n+d) x} d x^{-2\alpha}.  
$$
Thus $s'$ only vanishes at $x_+$ and $x_-$ and so $s(x_+)\ne s(x_-)$, concluding the proof. 
\end{proof}

\subsection{Canonical variations viewpoint}\label{canvar}  
We prove in this section that the Einstein metrics obtained in Theorem \ref{HHK-E} are actually canonical variations of the submersion $H\times H/\Delta K\rightarrow H/K\times H/K$ with fiber the symmetric space $K\times K/\Delta K$ (i.e., the red curve in Figure \ref{fig}).   

In \cite[9.90]{Bss}, starting from three compact Lie groups $K\subset H\subset G$, the Riemannian submersion $\pi:M\rightarrow B$ with totally geodesic fibers defined by  
$$
(F=H/K,\hat{g}) \longrightarrow (M=G/K,\gk)  \longrightarrow (B=G/H,\check{g}),
$$  
is used to obtain $G$-invariant Einstein  metrics on $M$, where $\hat{g}$, $\gk$ and $\check{g}$ are all determined by $-\kil_\ggo$.  Assume that $\hat{g}$ and $\check{g}$ are Einstein metrics with Einstein constants $\hat{\lambda}$ and $\check{\lambda}$, respectively.  It is proved that the following condition, 
\begin{equation}\label{Besse}
\check{\lambda}^2\geq (1-c)\left(1+2\tfrac{\dim{F}}{\dim{B}}\right)\hat{\lambda},
\end{equation}
implies the existence of exactly two (if $>$ holds) or one (if $=$ holds) Einstein metrics on $M=G/K$, which are canonical variations of the submersion (see \cite[9.67]{Bss}).  

It is easy to check that the Einstein metrics on $M=H\times H/\Delta K$ given in Theorem \ref{HHK-E} (see Remark \ref{E2z-rem4}) can be obtained in the above way by using the submersion attached to $\Delta K\subset K\times K\subset H\times H$, given by 
$$
(F=K\times K/\Delta K,\tfrac{1}{a}\gk) \longrightarrow (M=H\times H/\Delta K,\gk)  \longrightarrow (B=H/K\times H/K,\gk+\gk),
$$  
and the following notation correspondence:
$$
\begin{array}{c|cccccccccc}
{\rm Besse} &G&H&K&\hat{\lambda}&\check{\lambda}&c&a&\dim{F}&\dim{B}&\dim{H} \\ \hline 
{\rm Here} &H\times H&K\times K&\Delta K&\tfrac{a}{2}&\rho&a&\kappa&d&2n&2d
\end{array}.
$$
Indeed, condition \eqref{Besse} becomes 
$$
\tfrac{(2\kappa+1)^2}{16} \geq (1-a)(1+2\tfrac{d}{2n})\tfrac{a}{2} = \tfrac{a}{2}(1-a+(1-a)\tfrac{d}{n}) = \tfrac{a}{2}(1-a+\kappa),  
$$
which coincides exactly with our condition \eqref{cond2}.  

We conclude that the large class of Einstein metrics provided by Theorem \ref{HHK-E} was predicted in Besse's book \cite{Bss} more than $35$ years ago.

\section{Stability}\label{stab-sec}   

In this section, we use \cite{stab-dos} to study the stability of the Einstein metrics on $M=H\times H/\Delta K$ given in Theorem \ref{HHK-E}.  

In \cite{Hlb}, Hilbert  proved that Einstein metrics on a compact manifold $M$ are precisely the critical points of the total scalar curvature functional, 
\begin{equation}\label{sct}
\widetilde{\scalar}:\mca_1\rightarrow\RR, \qquad \widetilde{\scalar}(g):=\int_M \scalar(g)\; d\vol_g,
\end{equation}
where $\mca_1$ is the space of all unit volume Riemannian metrics on $M$ (see \cite[4.21]{Bss}).  Remarkably,   
if we consider the restriction of $\widetilde{\scalar}$ to the space $\cca_1$ of all unit volume constant scalar curvature metrics, then the coindex and nullity of any Einstein metric are both finite modulo trivial variations (see \cite{Brg, Kso2}).  A fundamental problem is therefore to determine whether a given Einstein metric $g$ is {\it stable} (or linearly stable), i.e., coindex and nullity indeed vanish, giving rise to an isolated (up to homothety) local maximum of $\widetilde{\scalar}|_{\cca_1}$ (see \cite{stab, Sch}).  

The tangent space $T_g\cca_1$ coincides, modulo trivial variations, with $\tca\tca_g=\Ker\delta_g\cap\Ker\tr_g$, the space of divergence-free (or transversal) and traceless symmetric $2$-tensors, so-called {\it TT-tensors}, and if $\ricci(g)=\rho g$, then for any $T\in\tca\tca_g$ the Hessian is given by, 
\begin{equation}\label{LL-intro}
\widetilde{\scalar}''_g(T,T) = \unm\la(2\rho\id-\Delta_L)T,T\ra_g,
\end{equation}
where $\Delta_L$ is the {\it Lichnerowicz Laplacian} of $g$ (see \cite[4.64]{Bss}).  Thus $g$ is {\it stable} (resp.\ {\it unstable} ) if and only if $2\rho<\lambda_L$ (resp.\ $\lambda_L<2\rho$), where $\lambda_L$ denotes the smallest eigenvalue of $\Delta_L|_{\tca\tca_g}$.  The number $\lambda_L$ is very hard to compute or even estimate, it is known for only few metrics.    

If $M=G/K$ is a compact homogeneous space, then $G$-invariant Einstein metrics are precisely the critical points of the scalar curvature functional,
$$
\scalar:\mca_1^G\longrightarrow \RR,  
$$ 
where $\mca^G_1$ is the finite-dimensional manifold of all unit volume $G$-invariant metrics on $M$.  It is proved in \cite[\S 3.4]{stab-tres} that 
$$
T_g\mca_1^G = T_gN_G(K)^*g \oplus \tca\tca_g^G, 
$$
where $N_G(K)\subset\Diff(M)$ is the normalizer of $K$ and $\tca\tca_g^G$ is the space of $G$-invariant TT-tensors.  In this light, an Einstein metric $g\in\mca_1^G$ is called  $G$-{\it stable} when $\scalar''_g|_{\tca\tca_g^G}<0$, or equivalently, $2\rho <\lambda_L^G$, where $\lambda_L^G$ is the smallest eigenvalue of $\Delta_L|_{\tca\tca_g^G}$.  Note that $\lambda_L\leq\lambda_L^G$.  In the case when $g$ is $G$-{\it unstable}, i.e., $\lambda_L^G<2\rho$ (or $\scalar''_g(T,T)>0$ for some $T\in{\tca\tca_g^G}$), one has:
\begin{enumerate}[{\small $\bullet$}]
\item $g$ is indeed unstable as a critical point of $\widetilde{\scalar}:\cca_1\rightarrow\RR$. 

\item $g$ is also dynamically unstable for the normalized Ricci flow, in the sense that there is an ancient solution emerging from $g$ (see \cite{CaoHe, Krn}).  

\item the metric $g$ does not realize the Yamabe invariant of $M$ (see \cite[Theorem 5.1]{BhmWngZll}).   
\end{enumerate}

For any reductive decomposition $\ggo=\kg\oplus\pg$ of $M=G/K$, the Lichnerowicz Laplacian $\Delta_L$ restricted to $\tca\tca_g^G$ is encoded in the self-adjoint operator defined by 
$$
\lic_\pg=\lic_\pg(g):\sym(\pg)^K\longrightarrow\sym(\pg)^K, \qquad  
\Delta_LT = \la\lic_\pg A\cdot,\cdot\ra_g, \qquad\forall T=g(A\cdot,\cdot) \in\tca\tca_g^G,   
$$ 
where $\sym(\pg)^K$ is the space of all $g$-symmetric and $\Ad(K)$-invariant maps of $\pg$ and $\la A,B\ra_g:=\tr{AB}$.  Thus $\lambda_L^G$ is the smallest eigenvalue of the operator $\lic_\pg|_{\tca\tca_g^G}$.  If $\pg=\pg_1\oplus\dots\oplus\pg_r$ is a $Q$-orthogonal decomposition 
 in $\Ad(K)$-invariant subspaces, where $Q$ is any bi-invariant inner product on $\ggo$, then we consider the subspace   
$$
S:=\spann\left\{ \tfrac{1}{\sqrt{n_1}}I_1,\dots, \tfrac{1}{\sqrt{n_r}}I_r\right\} \subset \sym(\pg)^K, \qquad n_k:=\dim{\pg_k}, \qquad I_k|_{\pg_i}:=\delta_{ki}I|_{\pg_i}.
$$
Let $L=L(g)$ denote the $r\times r$-matrix of $\lic_\pg$ restricted and projected on $S$ with respect to the above orthonormal basis of $S$, i.e., 
$$
L_{km}:=\la\lic_\pg \tfrac{1}{\sqrt{n_k}}I_k, \tfrac{1}{\sqrt{n_m}}I_m\ra_g.
$$  
Note that if we set 
$$
\mca^{diag}:=\{ y_1Q|_{\pg_1}+\dots+y_rQ|_{\pg_r}: y_i>0\}\subset\mca^G,
$$
then $S=T_g\mca^{diag}$ and so $2\rho I-L$ is the Hessian of $\scalar:\mca^{diag}\rightarrow\RR$ at $g$.  In particular,
\begin{equation}\label{specL} 
\Spec(L|_{S\cap \tca\tca_g^G})\subset [\lambda_L^G,\lambda_L^{G,max}], 
\end{equation} 
where $\lambda_L^{G,max}$ is the largest eigenvalue of the operator $\lic_\pg|_{\tca\tca_g^G}$, so the eigenvalues of $L$ may help to establish the stability type of an Einstein metric $g\in\mca^G$.   

The following formula in terms of the corresponding structural constants $[ijk]$'s (see \eqref{ijk-gen}) will be very useful.  

\begin{theorem}\label{formLp-intro} \cite[Theorem 3.1]{stab-dos}   
For any metric $g=x_1Q|_{\pg_1}+\dots+x_rQ|_{\pg_r}\in\mca^G$, 
$$
L_{kk} = \tfrac{1}{n_k}\sum_{i,j\ne k} [ijk]\tfrac{x_k}{x_ix_j}  + \tfrac{1}{n_k}\sum_{i\ne k}  [ikk]\tfrac{x_i}{x_k^2}, \quad
L_{km} = \tfrac{1}{\sqrt{n_k}\sqrt{n_m}} \sum_{i} [ikm]\tfrac{x_i^2-x_k^2-x_m^2}{x_ix_kx_m}.  
$$
\end{theorem}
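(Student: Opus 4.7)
The plan is to compute the matrix entries of $L$ by directly evaluating $\lic_\pg(I_k)$ and then pairing against $I_m$ via the trace inner product $\langle A,B\rangle_g=\tr(AB)$. I would begin by recalling (or re-deriving from the Weitzenb\"ock identity $\Delta_L=\nabla^*\nabla+2\Ricci-2\mathring{R}$, combined with the Koszul formula for the canonical connection on $G/K$) a Lie-algebraic expression for $\lic_\pg(A)$ valid for every $A\in\sym(\pg)^K$. It decomposes into three pieces: a Bochner/rough-Laplacian piece (schematically $-\sum_\alpha[\ad_\pg X_\alpha,[\ad_\pg X_\alpha,A]]$ with $\{X_\alpha\}$ a $g$-orthonormal basis of $\pg$), a Ricci-commutator piece $\Ricci\circ A+A\circ\Ricci$, and a curvature-commutator piece $-2\mathring{R}(A)$. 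Each summand is a Casimir-type sum of squared Lie brackets, and switching from a $g$-orthonormal basis of $\pg$ to a $Q$-orthonormal basis adapted to $\pg=\pg_1\oplus\cdots\oplus\pg_r$ introduces scaling factors $x_i^{\pm1/2}$ that track which subspace each bracket summand lives in.

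Specializing to $A=I_k$, the $g$-orthogonal projection onto $\pg_k$, every sum that appears reduces to a combination of squared brackets $Q([e^i_\alpha,e^j_\beta],e^k_\gamma)^2=[ijk]$ weighted by rational functions of the $x_i$'s, directly from the definition \eqref{ijk-gen}. Pairing with $I_m/\sqrt{n_m}$ extracts only the $\pg_m$-block of $\lic_\pg(I_k)$ and adds an additional factor $x_m$ through $\langle\cdot,\cdot\rangle_g$. Collecting the three contributions and using the permutation symmetry $[ijk]=[jik]=[kij]$, one obtains, after cancellation, the off-diagonal formula $L_{km}=\tfrac{1}{\sqrt{n_k n_m}}\sum_i[ikm]\tfrac{x_i^2-x_k^2-x_m^2}{x_i x_k x_m}$ for $k\neq m$, and for $k=m$ the two-sum formula for $L_{kk}$, where the split reflects whether both $i,j$ differ from $k$ or one of them equals $k$ (the latter case corresponding to brackets $[\pg_i,\pg_k]\subset\pg_k$ that do not contribute to the generic sum). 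The symmetry $L_{km}=L_{mk}$, automatic from self-adjointness of $\lic_\pg$ together with $[ikm]=[imk]$, provides a consistency check on the signs.

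The main obstacle is the bookkeeping needed to organize the three contributions so that a large number of monomials collapse into the compact numerator $x_i^2-x_k^2-x_m^2$. In particular, terms of the form $x_k/(x_ix_m)$ produced by the curvature-commutator piece must precisely cancel against analogous terms arising from the Bochner piece, while the Ricci-commutator part produces the remaining balanced pairs. As an independent verification I would check that at any Einstein metric $g\in\mca^{G,diag}$ with $\ricci(g)=\rho g$, the matrix $2\rho I-L$ matches the Hessian of the (normalized) scalar curvature restricted to $\mca^{G,diag}$, which can be computed directly by differentiating the Ricci-eigenvalue formula \eqref{rick} twice with respect to $x_k$ and $x_m$. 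Agreement of these two computations on the submanifold $\mca^{G,diag}$ pins down every coefficient in the stated formula.
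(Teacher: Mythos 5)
First, a point of reference: the paper does not actually prove this statement --- it is imported verbatim from \cite[Theorem 3.1]{stab-dos} --- so your attempt can only be measured against the strategy of that source, which is indeed the one you outline: express $\Delta_L$ on $G$-invariant symmetric $2$-tensors Lie-theoretically via a Weitzenb\"ock-type decomposition, evaluate each piece on the projections $I_k$, and reorganize everything into the structural constants $[ijk]$. At that level of generality your plan is the right one.

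There are, however, concrete gaps that would derail the execution. The central one is your starting formula for the Bochner piece. A diagonal metric $g=x_1Q|_{\pg_1}+\dots+x_rQ|_{\pg_r}$ is in general \emph{not} naturally reductive, so the Nomizu operator of its Levi-Civita connection (not the canonical connection, which you invoke) is $\unm[X,\cdot]_\pg+U(X,\cdot)$ with a nonzero symmetric part $U$ determined by $2g(U(X,Y),Z)=g([Z,X]_\pg,Y)+g(X,[Z,Y]_\pg)$. The rough Laplacian $\nabla^*\nabla$ and the curvature term in $\Delta_L T=\nabla^*\nabla T+\Ricci\circ T+T\circ\Ricci-2\mathring{R}T$ therefore carry $U$-contributions that your schematic expression $-\sum_\alpha[\ad_\pg X_\alpha,[\ad_\pg X_\alpha,A]]$ (a canonical-connection Casimir) omits entirely; these contribute nontrivially to the coefficients of the $[ijk]$'s, so dropping them changes the answer rather than merely complicating the bookkeeping. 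Second, for a statement whose entire content is an exact coefficient identity, asserting that ``a large number of monomials collapse into $x_i^2-x_k^2-x_m^2$'' is the theorem itself, not an obstacle to be flagged in passing; no term of the computation is actually produced, and the genuinely different shape of the diagonal entry $L_{kk}$ (note that $[kkk]$ does not appear at all, and setting $m=k$ in the off-diagonal formula does not reproduce it) is exactly the kind of feature a hand-waved cancellation argument will get wrong. Finally, your proposed verification cannot close these gaps: the identity $\widetilde{\scalar}''_g(T,T)=\unm\la(2\rho\id-\Delta_L)T,T\ra_g$ of \eqref{LL-intro} holds only at Einstein metrics and only for $T\in\tca\tca_g^G$, so differentiating \eqref{rick} twice pins down $L$ merely on the trace-free part of $S$ at the finitely many diagonal Einstein metrics, whereas the stated formula is claimed for every $g\in\mca^{G,diag}$ and for the full matrix $L$, including its component along the direction of $g$ itself.
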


In order to compute the stability type of the Einstein metrics $g_\pm$ on $M=G/\Delta K=H\times H/\Delta K$ given in Theorem \ref{HHK-E}, we consider the reductive complement $\pg=\pg_1\oplus\pg_2\oplus\pg_3$ defined in \eqref{gBdec}.  Thus $\dim{\mca_1^{diag}}=2$.  We set $g_+=g_-$ and $x_+=x_-=\frac{\kappa+1}{2\kappa+1}$ in the case when $K$ is abelian.  

It is easy to prove that 
$$
N_\ggo(\Delta\kg)=\Delta\kg+(\kg_0+\qg_0,\kg_0+\qg_0), \qquad \qg_0:=\{ X\in\qg:[\kg,X]=0\}.
$$
Since $\cas_\chi=\kappa I_\qg$ is a necessary condition for the existence of $g_\pm$, we have that $q_0=0$, and using in addition that $\ad{(Z_1,Z_2)}|_\pg\in\sog(\pg,g)$ for all $Z_1,Z_2\in\kg_0$, we obtain that  $T_{g_\pm}N_G(\Delta K)^*g_\pm =0$.  Thus 
$$
T_{g_\pm}\mca_1^G = \tca\tca_{g_\pm}^G =\{ g(A\cdot,\cdot): A\in\sym(\pg)^K,\; \tr{A}=0\}, 
$$
and written in terms of $\left\{  \tfrac{1}{\sqrt{n}}I_1, \tfrac{1}{\sqrt{n}}I_2, \tfrac{1}{\sqrt{d}}I_3\right\}$, we have that $S\cap \tca\tca_{g_\pm}^G=\{ (\sqrt{n},\sqrt{n},\sqrt{d})\}^\perp$.  

\begin{lemma}\label{Lp} 
For the Einstein metrics $g_\pm=(x_\pm,x_\pm,1)_{\gk}$, we have that 
$$
L(g_\pm) = \frac{\kappa}{2 x_\pm^2}  
\left[\begin{matrix} 1&0&-\sqrt{\frac{n}{d}}\\ 0&1&-\sqrt{\frac{n}{d}} \\ 
-\sqrt{\frac{n}{d}} &-\sqrt{\frac{n}{d}}& \frac{2n}{d}
\end{matrix}\right],
$$ 
with eigenvalues given by 
$$
0, \qquad \lambda_1^\pm=  \frac{\kappa}{2 (x_\pm)^2}, \qquad \lambda_2^\pm = \frac{2n+d}{d}\frac{\kappa}{2(x_\pm)^2},
$$
and respective eigenvectors, 
$$
(\sqrt{n},\sqrt{n},\sqrt{d}), \qquad (1,-1,0), \qquad (-\sqrt{d},-\sqrt{d},2\sqrt{n}).
$$
\end{lemma}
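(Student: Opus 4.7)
The plan is to compute $L(g_\pm)$ directly from Theorem \ref{formLp-intro} using the structural constants furnished by Lemma \ref{ijk}, and then verify the eigendata by a one-line matrix multiplication for each candidate eigenvector.

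First I would identify which constants $[ijk]$ are nonzero for the decomposition $\pg=\pg_1\oplus\pg_2\oplus\pg_3$. By the bracket relations \eqref{pipj}, we have $[\pg_1,\pg_2]=0$ and $[\pg_3,\pg_3]\subset\Delta\kg\perp\pg$, so every $[ijk]$ involving the pair $12$, or involving two or more indices equal to $3$, vanishes. Lemma \ref{ijk} gives the remaining nonzero ones:
\[
[111]=[222]=(1-2\kappa)n,\qquad [113]=[223]=\tfrac{\kappa n}{2},
\]
together with their permutations. Thus in the formulas of Theorem \ref{formLp-intro}, only very few terms survive.

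Next I would plug $x_1=x_2=x_\pm=:x$, $x_3=1$, $n_1=n_2=n$, $n_3=d$ into
\[
L_{kk}=\tfrac{1}{n_k}\sum_{i,j\ne k}[ijk]\tfrac{x_k}{x_ix_j}+\tfrac{1}{n_k}\sum_{i\ne k}[ikk]\tfrac{x_i}{x_k^2},\qquad L_{km}=\tfrac{1}{\sqrt{n_kn_m}}\sum_i[ikm]\tfrac{x_i^2-x_k^2-x_m^2}{x_ix_kx_m}.
\]
For $L_{11}$ the only surviving contribution comes from $[311]=\tfrac{\kappa n}{2}$, yielding $L_{11}=\tfrac{1}{n}\cdot\tfrac{\kappa n/2}{x^2}=\tfrac{\kappa}{2x^2}$; symmetry gives $L_{22}=L_{11}$. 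For $L_{33}$ the two contributions $[113]=[223]=\tfrac{\kappa n}{2}$ combine to $L_{33}=\tfrac{1}{d}\cdot\tfrac{\kappa n}{x^2}=\tfrac{\kappa}{2x^2}\cdot\tfrac{2n}{d}$. The entry $L_{12}=0$ since no $[i12]$ is nonzero, while for $L_{13}$ only $[113]$ contributes and gives $L_{13}=\tfrac{1}{\sqrt{nd}}\cdot\tfrac{\kappa n/2\cdot(x^2-x^2-1)}{x^2}=-\tfrac{\kappa}{2x^2}\sqrt{n/d}$, with $L_{23}=L_{13}$ by symmetry. Collecting these entries yields the claimed matrix.

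Finally I would check the eigendata by direct substitution into the matrix. The vector $(\sqrt{n},\sqrt{n},\sqrt{d})$ satisfies $\sqrt{n}-\sqrt{n/d}\cdot\sqrt{d}=0$ on the first two rows and $-2\sqrt{n/d}\cdot\sqrt{n}+(2n/d)\sqrt{d}=0$ on the third, giving eigenvalue $0$ (this is just the scaling direction $g\mapsto tg$, along which $\scalar$ is homogeneous, so the zero is expected). The vector $(1,-1,0)$ is clearly an eigenvector with factor $1$, producing $\lambda_1^\pm=\tfrac{\kappa}{2x_\pm^2}$. For $(-\sqrt{d},-\sqrt{d},2\sqrt{n})$, the first row gives $-\sqrt{d}-2n/\sqrt{d}=-(2n+d)/\sqrt{d}=\tfrac{2n+d}{d}\cdot(-\sqrt{d})$, and the third row gives $2\sqrt{n}+4n\sqrt{n}/d=\tfrac{2n+d}{d}\cdot 2\sqrt{n}$, so the eigenvalue is $\lambda_2^\pm=\tfrac{2n+d}{d}\cdot\tfrac{\kappa}{2x_\pm^2}$. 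There is no real obstacle here; the only point that requires care is ensuring that all cross-terms $[ijk]$ with $\{i,j,k\}\ne\{1,1,3\},\{2,2,3\},\{1,1,1\},\{2,2,2\}$ vanish, which is immediate from \eqref{pipj}.
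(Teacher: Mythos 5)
Your computation is correct and follows exactly the route of the paper's proof: substituting the structural constants of Lemma \ref{ijk} into the formulas of Theorem \ref{formLp-intro} to get the matrix entries, and then verifying the eigenvectors by direct multiplication. All entries and eigenvalue checks agree with the paper (the only quibble is the parenthetical justification of the zero eigenvalue, which really comes from $\Delta_L g=0$ rather than homogeneity of $\scalar$, but your explicit verification makes this immaterial).
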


\begin{proof}
Using Theorem \ref{formLp-intro} and the structural constants of $\pg=\pg_1\oplus\pg_2\oplus\pg_3$ given in Lemma \ref{ijk}, we obtain that
$$
L_{ii}=\tfrac{1}{n}[3ii]\tfrac{1}{(x_\pm)^2}=\tfrac{\kappa}{2(x_\pm)^2}, \quad i=1,2, \qquad
L_{33}=\tfrac{1}{d}[113]\tfrac{1}{(x_\pm)^2}+\tfrac{1}{d}[223]\tfrac{1}{(x_\pm)^2}=\tfrac{\kappa n}{(x_\pm)^2d}, \quad
$$
and
$$
L_{12}=0, \qquad L_{i3}=\tfrac{1}{\sqrt{n}\sqrt{d}}[ii3]\tfrac{-1}{(x_\pm)^2}=\tfrac{-\kappa n}{\sqrt{n}\sqrt{d}2(x_\pm)^2}, \quad i=1,2,
$$
concluding the proof.  
\end{proof}

It follows from Lemma \ref{Lp} that 
\begin{equation}\label{lamb}
\lambda_L^G(g_\pm)\leq\lambda_1^\pm<\lambda_2^\pm\leq\lambda_L^{G,max}(g_\pm),
\end{equation}
which allows to establish the instability of the Einstein metrics $g_\pm$ as follows.  Recall from Theorem \ref{HHK-E} that $\ricci(g_\pm)=\rho^\pm g_\pm$, where $\rho^\pm:=\tfrac{(2\kappa+1) x_\pm -\kappa}{4(x_\pm^2)}$.

\begin{proposition}\label{Lpe}
The following inequalities hold: if $K$ is abelian then $\lambda_1<2\rho<\lambda_2$, and for $K$ semisimple,
$$
\lambda_1^-<2\rho^-<\lambda_2^-, \qquad 
\lambda_1^+<\lambda_2^+<2\rho^+.
$$
In particular, 
\begin{enumerate}[{\rm (i)}] 
\item $g_-$ is $G$-unstable with coindex $\geq 1$ and a saddle point of $\scalar:\mca_1^{diag}\rightarrow\RR$.  Moreover, the function $\scalar(g_t)$, where $g_t:=(tx^-,\frac{1}{t}x^-,1)\in\mca_1^{diag}$, attains a local minimun at $t=1$ (i.e., at $g_1=g_-$).  All this also holds when $K$ is abelian for $g_+=g_-$.  

\item $g_+$ is $G$-unstable with coindex $\geq 2$ and a local minimum of $\scalar:\mca_1^{diag}\rightarrow\RR$. 
\end{enumerate}
\end{proposition}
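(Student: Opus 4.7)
The plan is to reduce everything to the sign analysis of the quadratic
\[
f(x)=2ax^2-(2\kappa+1)x+(1-a+\kappa),
\]
whose roots are exactly $x_\pm$ by Theorem \ref{HHK-E}, together with the fundamental identity $\kappa=\tfrac{d(1-a)}{n}$ from Remark \ref{E2z-rem3}. As a preliminary step I would rewrite $2\rho^\pm$ in a form more convenient for the comparisons with $\lambda_1^\pm$ and $\lambda_2^\pm$: either directly from the Einstein equation $r_3=\rho^\pm$ via Corollary \ref{rics2str}, or by substituting $2ax_\pm^2=(2\kappa+1)x_\pm-(1-a+\kappa)$ into the formula in Theorem \ref{HHK-E}, one obtains
\[
2\rho^\pm=a+\frac{1-a}{2x_\pm^2},
\]
an identity that also covers the abelian case $a=0$ under the obvious interpretation.

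The inequality $\lambda_1^\pm<2\rho^\pm$ follows at once: subtracting $\lambda_1^\pm=\tfrac{\kappa}{2x_\pm^2}$ from the display above gives
\[
2\rho^\pm-\lambda_1^\pm=a+\frac{1-a-\kappa}{2x_\pm^2},
\]
which is strictly positive by Remark \ref{E2z-rem6} (guaranteeing $a+\kappa<1$) together with $a\ge 0$. This disposes of both the semisimple and the abelian cases in one stroke.

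The nontrivial step, and what I expect to be the main obstacle, is the comparison of $\lambda_2^\pm$ with $2\rho^\pm$. Using $\kappa=\tfrac{d(1-a)}{n}$ and the derived identity $n(1-a+\kappa)=(n+d)(1-a)$, the plan is to simplify
\[
\lambda_2^\pm-2\rho^\pm=\frac{2(n+d)(1-a)-n(2\kappa+1)\,x_\pm}{2n\,x_\pm^2},
\]
so that its sign coincides with the sign of $y-x_\pm$ where $y:=\tfrac{2(n+d)(1-a)}{n(2\kappa+1)}$. A direct evaluation of $f$ at $y$, again using the identity $n(1-a+\kappa)=(n+d)(1-a)$, is expected to yield the clean expression
\[
f(y)=\frac{(n+d)(1-a)}{n(2\kappa+1)^2}\bigl(8a(1-a+\kappa)-(2\kappa+1)^2\bigr),
\]
which is $\le 0$ precisely by the Einstein existence condition \eqref{cond2} in Remark \ref{E2z-rem}. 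Since $f$ opens upward, $f(y)\le 0$ forces $x_-\le y\le x_+$, with strict inequalities whenever \eqref{cond2} is strict; this gives $\lambda_2^-\ge 2\rho^-$ and $\lambda_2^+\le 2\rho^+$ (both strict in the generic case), and the abelian case is obtained by setting $a=0$ in the above.

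The stability conclusions then come out of Lemma \ref{Lp} combined with \eqref{lamb} and \eqref{LL-intro}. The eigenvectors $(1,-1,0)$ and $(-\sqrt d,-\sqrt d,2\sqrt n)$ of $L(g_\pm)$ for the eigenvalues $\lambda_1^\pm$ and $\lambda_2^\pm$ are $\gk$-orthogonal to $(\sqrt n,\sqrt n,\sqrt d)$ and hence lie in $S\cap\tca\tca_{g_\pm}^G$. For $g_+$, both eigenvalues are strictly below $2\rho^+$, so $\widetilde{\scalar}''_{g_+}$ is negative on a $2$-plane in $\tca\tca_{g_+}^G$ (coindex $\ge 2$), while the Hessian $2\rho^+ I-L(g_+)$ of $\scalar|_{\mca^{G,diag}}$ restricted to $T_{g_+}\mca_1^{G,diag}=\{(\sqrt n,\sqrt n,\sqrt d)\}^\perp$ is positive definite, making $g_+$ a local minimum of $\scalar\colon\mca_1^{G,diag}\to\RR$. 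For $g_-$ (and for the unique metric when $K$ is abelian), only $\lambda_1^-$ lies below $2\rho^-$, yielding coindex $\ge 1$; the Hessian on $T_{g_-}\mca_1^{G,diag}$ has signature $(1,1)$, i.e.\ a saddle point, and the tangent direction at $t=1$ of the curve $g_t$ is proportional to the positive-eigenvalue direction $(1,-1,0)$, so $t\mapsto\scalar(g_t)$ attains a local minimum at $t=1$.
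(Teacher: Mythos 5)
Your proof is correct and follows essentially the same route as the paper's: both arguments reduce $\lambda_1^\pm<2\rho^\pm$ to the inequality $a+\kappa<1$ of \eqref{cond4}, and both reduce the comparison of $\lambda_2^\pm$ with $2\rho^\pm$ to locating $\tfrac{2(1-a+\kappa)}{2\kappa+1}$ strictly between $x_-$ and $x_+$ via the sign of $f$ at that point, which is exactly \eqref{xmxm}; your identity $2\rho^\pm=a+\tfrac{1-a}{2x_\pm^2}$ just makes the first comparison algebraically cleaner than the paper's square-root manipulation. One small sign slip: since \eqref{LL-intro} gives $\widetilde{\scalar}''_g(T,T)=\unm(2\rho-\lambda)\|T\|_g^2$, the eigenvalues $\lambda_1^+,\lambda_2^+<2\rho^+$ make $\widetilde{\scalar}''_{g_+}$ \emph{positive} (not negative) on the corresponding $2$-plane in $\tca\tca_{g_+}^G$, and it is this positivity that yields coindex $\geq 2$ under the paper's convention; your conclusions are unaffected.
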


\begin{remark}
The possible situations are illustrated in Figure \ref{fig}.  
\end{remark}

\begin{remark}\label{nonh}
Since 
$$
N_G(\Delta K)=\{ u=(h_1,h_2)\in G:h_1,h_2\in N_H(K), \; h_2^{-1}h_1\in C_H(K)\},
$$
we obtain that $dI_u|_o=\Ad(u)|_\pg$ leaves each $\pg_i$ invariant for $i=1,2,3$ and for any $u\in N_G(\Delta K)$, so the pull back action of $N_G(\Delta K)$ on $\mca^G$ leaves the submanifold $\mca^{diag}$ invariant.  This implies that $g_+$ is not $G$-equivariantly isometric up to scaling to $g_-$ (see \cite[Section 3.3]{stab-tres}); indeed, their stability types as critical points of $\scalar:\mca_1^{diag}\rightarrow\RR$ are different by the above proposition.  
\end{remark} 

\begin{remark}\label{stab-inv}
It is worth pointing out the above application of stability to differentiate two homogeneous Einstein metrics up to homothecy.  The only other invariant that has been used in the literature so far is the normalized scalar curvature, which is often only numerically approximated and sometimes the corresponding values are suspiciously too close (see e.g.\ Tables \ref{Sc1} and \ref{Sc2}).   
\end{remark} 

\begin{proof}
If $K$ is abelian, then $2\rho=\frac{(2\kappa+1)^2}{2(\kappa+1)^2}$ and by Lemma \ref{Lp}, $\lambda_1=\frac{(2\kappa+1)^2\kappa}{2(\kappa+1)^2}$ and $\lambda_2=\frac{2n+d}{d}\frac{(2\kappa+1)^2\kappa}{2(\kappa+1)^2}$.  Using that $\kappa=\tfrac{d}{n}$ we obtain that $\lambda_1<2\rho<\lambda_2$.   

In the case when $K$ is semisimple, it follows from Lemma \ref{Lp} and the fact that $x_-\leq x_+$ that $\lambda_1^\pm<2\rho^\pm$ if and only if 
$ \tfrac{2\kappa}{ 2\kappa+1} <  x_-$.  By Theorem \ref{HHK-E}, (ii), this is equivalent to 
$$ 
\tfrac{8 \kappa a}{2\kappa+1} <  2\kappa+1 - \sqrt{(2\kappa+1)^2 -8a (\kappa+1-a)},
$$
or to
$$
8a (\kappa+1-a) -  16 \kappa a  +  \tfrac{(8\kappa a)^2}{(2\kappa+1 )^2}  >  0,   
$$
which holds by \eqref{cond4}.  On the other hand,  $2\rho^-<\lambda_2^-$ if and only if $d((2\kappa+1)x_- -2\kappa)<2n\kappa$, which by \eqref{kap} is equivalent to 
$x_- < \frac{2( \kappa +1-a)}{1+2\kappa}$, and analogously, $\lambda_2^+  <  2\rho^+$ is equivalent to 
$ \frac{2( \kappa +1-a)}{1+2\kappa}<  x_+$, so they both follow from \eqref{xmxm}.    

Finally, parts (i) and (ii) follow from the fact that $2\rho^\pm-L(g_\pm)$ is the Hessian at $g_\pm$ of the function $\scalar:\mca^{diag}\rightarrow\RR$ and $\la LA,A\ra_{g_+}=\la L_\pg A,A\ra_{g_+}$ for all $A\in S\cap\tca\tca_{g_+}^G$, concluding the proof.   
\end{proof}

\section{Non-diagonal Einstein metrics}\label{HHKsym-sec}

The $\gk$-orthogonal $\Ad(K)$-invariant decomposition $\pg=\pg_1\oplus\pg_2\oplus\pg_3$ (see \eqref{gBdec}) satisfies that $\pg_1\simeq\pg_2$ as $\Ad(K)$-representations, giving rise to the following {\it non-diagonal} $G$-invariant metrics on the homogeneous space $M=G/\Delta K=H\times H/\Delta K$.  In this section, we will explore for Einstein metrics on this class.  

We consider a $G$-invariant metric 
$$
g=\left(x_1,x_2,x_3,x_4\right)
$$ 
defined by $g(\pg_1,\pg_3)=g(\pg_2,\pg_3)=0$ and for all $X_i,Y_i\in\qg$, $Z,W\in\kg$, 
\begin{align}
g((X_1,X_2),(Y_1,Y_2)):=& x_1(-\kil_\hg)(X_1,Y_1) +x_2(-\kil_\hg)(X_2,Y_2) \notag \\ 
&+x_4\left((-\kil_\hg)(X_1,Y_2)+(-\kil_\hg)(X_2,Y_1)\right),  \label{defg4} \\ 
g((Z,-Z),(W,-W)):=& x_3(-\kil_\ggo)((Z,-Z),(W,-W)) = 2x_3(-\kil_\hg)(Z,W), \notag
\end{align} 
where $x_1,x_2,x_3>0$ and $x_1x_2>x_4^2$.  The matrix of $g$ with respect to $\gk$ is therefore given by 
\begin{equation}\label{mat}
\left[\begin{matrix}
x_1I_{n}&x_4I_{n}&0\\ 
x_4I_{n}&x_2I_{n}&0\\ 
0&0&x_3I_{d}
\end{matrix}\right],
\end{equation}
where $d:=\dim{K}$ and $n:=\dim{H/K}$.  

\begin{remark}\label{all}
If the isotropy representation of $H/K$ is irreducible and of real type and $K$ is simple, then any $H\times H$-invariant metric on the homogeneous space $M=H\times H/\Delta K$ is of this form.  
\end{remark}

A $g$-orthogonal $\Ad(K)$-invariant decomposition of $\pg$ is given by 
$$
\pg=\qg_1\oplus\qg_2\oplus\qg_3,
$$
where 
$$
\begin{array}{c}
\qg_1:=\pg_1=\left\{(X,0):X\in\qg\right\}, \qquad \qg_2:=\left\{(-x_4X,x_1X):X\in\qg\right\}, \\ \\ \qg_3:=\pg_3=\left\{(Z,-Z):Z\in\kg\right\}. 
\end{array}
$$  
If $\{ Z_\alpha\}$ is a $(-\kil_\hg)$-orthonormal basis of $\kg$ and $\{ e_\alpha\}$ is a $(-\kil_\hg)$-orthonormal basis of $\qg$, then it is easy to check that
$$
\left\{Y^1_\alpha:=\tfrac{1}{\sqrt{x_1}}(e_\alpha,0)\right\}, \quad 
\left\{Y^2_\alpha:=\tfrac{1}{\sqrt{x_1(x_1x_2-x_4^2)}}(-x_4e_\alpha,x_1e_\alpha)\right\}, \quad 
\left\{ Y^3_\alpha:=\tfrac{1}{\sqrt{2x_3}}(Z_\alpha,-Z_\alpha)\right\},
$$ 
are $g$-orthonormal bases of $\qg_1$, $\qg_2$ and $\qg_3$, respectively.  We assume that the basis $\{ Z_\alpha\}$ is adapted to the decomposition \eqref{decs}, so the basis $\{ Y^3_\alpha\}$ is adapted to the decomposition $\pg_3=\pg_3^0\oplus\dots\oplus\pg_3^t$.  

An explicit description of the Ricci tensor for these non-diagonal metrics involves heavy computations, even in the case when $H/K$ is an irreducible symmetric space.  

\begin{proposition}\label{ri}
If $H/K$ is an irreducible  symmetric space, then the Ricci tensor of the metric $g=(x_1,x_2,x_3,x_4)$ on $M^{2n+d}=H\times H/\Delta K$ is given for any 
$$
\overline{X}=(X,0)\in\pg_1, \quad X\in\qg, \quad \overline{Y}=(0,Y)\in\pg_2, \quad Y\in\qg, \quad \overline{Z}=(Z,-Z)\in\pg_3^l, \quad Z\in\kg_l,
$$
$l=0,\dots,t$, as follows: $\quad\ricci(g)(\overline{X},\overline{Z}) = 0$, $\quad \ricci(g)(\overline{Y},\overline{Z}) = 0$,
\begin{align*}
\ricci(g)(\overline{X},\overline{X}) =& \left(\frac{x_3}{8x_1} + \frac{x_3x_4^2}{8x_1(x_1x_2-x_4^2)} 
-\frac{x_1x_4^2}{2(x_1x_2-x_4^2)x_3} - \frac{1}{2}\right) \kil_\hg(X,X),\\   
\ricci(g)(\overline{Y},\overline{Y}) =& \left(\frac{x_1x_3}{8(x_1x_2-x_4^2)} 
+\frac{x_1x_2-x_4^2}{8x_1x_3}
-\frac{(x_1x_2+x_4^2)^2}{8x_1(x_1x_2-x_4^2)x_3} -  \frac{1}{2}\right) \kil_\hg(Y,Y),\\  
\ricci(g)(\overline{X},\overline{Y}) =& \left(\frac{x_3x_4}{8(x_1x_2-x_4^2)} 
-\frac{x_4}{4x_3}
- \frac{x_4(x_1x_2+x_4^2)}{4(x_1x_2-x_4^2)x_3}\right)  \kil_\hg(X,Y),\\ 
\ricci(g)(\overline{Z},\overline{Z}) =& \left(a_l(R-1)-R\right)
\kil_\hg(Z,Z), \qquad \kil_{\kg_l}=a_l\kil_\hg|_{\kg_l},
\end{align*}
where
\begin{equation}\label{defR}
R:=
-\frac{2x_4^2}{x_1x_2-x_4^2}
+ \frac{x_3^2}{4x_1^2} 
+\frac{x_3^2(x_1^2-x_4^2)^2}{4x_1^2(x_1x_2-x_4^2)^2} 
+\frac{x_3^2x_4^2}{2x_1^2(x_1x_2-x_4^2)}.
\end{equation}
\end{proposition}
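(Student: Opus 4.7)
The approach is to apply the general Ricci formula \eqref{Rc} directly to the $g$-orthonormal basis $\{Y^1_\alpha\}\cup\{Y^2_\beta\}\cup\{Y^3_\gamma\}$ introduced just before the proposition. The symmetric space hypothesis $[\qg,\qg]\subset\kg$ is what makes this feasible, since it collapses many of the brackets to a single block.

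First I would catalog the $\pg$-projections of all brackets between basis elements. Using $[\hg\oplus 0,\,0\oplus\hg]=0$ and decomposing $(Z,0)\in(\kg,0)$ as $\tfrac12(Z,Z)+\tfrac12(Z,-Z)$, the brackets among $Y^1,Y^2$ land in $\pg_3$, those of $Y^3$ with $Y^1$ (resp.\ $Y^2$) land in $\pg_1$ (resp.\ $\pg_1\oplus\pg_2$), and brackets inside $\pg_3$ are in $\Delta\kg$, hence project trivially. Concretely one gets
\[
[Y^1_\alpha,Y^1_\beta]_\pg=\tfrac{1}{2x_1}([e_\alpha,e_\beta],-[e_\alpha,e_\beta]),\qquad [Y^2_\alpha,Y^2_\beta]_\pg=\tfrac{x_4^2-x_1^2}{2x_1(x_1x_2-x_4^2)}([e_\alpha,e_\beta],-[e_\alpha,e_\beta]),
\]
together with analogous, slightly bulkier formulas for $[Y^1_\alpha,Y^2_\beta]_\pg$, $[Y^3_\alpha,Y^1_\beta]_\pg$ and $[Y^3_\alpha,Y^2_\beta]_\pg$ carrying the expected $x_4$-dependent coefficients.

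Second, I would insert these brackets into the two sums of \eqref{Rc}, reducing everything to traces controlled by the two Casimir identities available here: since $H/K$ is symmetric, $\kappa=\tfrac12$ and $-\sum_\alpha(\ad Z_\alpha)^2|_\qg=\tfrac12 I_\qg$; dually, $-\sum_\alpha(\ad e_\alpha)^2|_{\kg_l}=(1-a_l)I_{\kg_l}$, which comes from $\kil_\hg=\kil_{\kg_l}-\sum_\alpha(\ad e_\alpha)^2$ on $\kg_l$ together with $\kil_{\kg_l}=a_l\kil_\hg|_{\kg_l}$. These two identities convert every double sum over $\alpha,\beta$ into a scalar multiple of the appropriate Killing form, and the terms $-\tfrac12\kil_\ggo(\cdot,\cdot)$ produce exactly the remaining $-\tfrac12\kil_\hg(X,X)$, $-\tfrac12\kil_\hg(Y,Y)$, and (for $\overline{Z}\in\pg_3^l$) the $a_l-1$ contribution inside the $\pg_3^l$ coefficient.

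Third, the vanishing statements $\ricci(g)(\overline{X},\overline{Z})=\ricci(g)(\overline{Y},\overline{Z})=0$ would follow from the block structure: because every nonzero bracket sends $\pg_1\oplus\pg_2$ into $\pg_3$ and $\pg_3\times(\pg_1\oplus\pg_2)$ into $\pg_1\oplus\pg_2$, each cross sum pairs a vector of $(\kg,\kg)$-parity with one of $(\qg,\qg)$-parity and hence vanishes. The remaining formulas are then obtained by collecting the terms that multiply $\kil_\hg(X,X)$, $\kil_\hg(Y,Y)$, $\kil_\hg(X,Y)$, and (for the $\pg_3^l$ entry) by observing that the cumulative coefficient naturally splits as $a_l(R-1)-R$ for the quantity $R$ in \eqref{defR}.

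\textbf{Main obstacle.} The difficulty is algebraic rather than conceptual: the basis element $Y^2_\alpha$ mixes both copies of $\hg$ with coefficients $-x_4$ and $x_1$, so every sum involving $Y^2$ generates several monomials in $x_1,x_2,x_3,x_4$ that must be grouped over the common denominator $x_1(x_1x_2-x_4^2)x_3$ to produce the asserted single fractions, and in particular to recognize the invariant $R$ in the $\pg_3^l$-coefficient. I would cross-check the final formulas by setting $x_4=0$ and verifying that they recover Proposition \ref{ricHHK} with $\kappa=\tfrac12$, and by confirming that $\ricci(g)(\overline{X},\overline{Y})$ becomes symmetric in $\overline{X},\overline{Y}$ after the substitution $x_1\leftrightarrow x_2$ dictated by the isometry swapping the two copies of $H$.
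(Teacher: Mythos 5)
Your proposal follows essentially the same route as the paper's proof: apply the general formula \eqref{Rc} to the $g$-orthonormal bases $\{Y^1_\alpha\},\{Y^2_\alpha\},\{Y^3_\alpha\}$, use $[\qg,\qg]\subset\kg$ to reduce the bracket table to the blocks you describe, and convert the double sums via exactly the two trace identities the paper isolates as \eqref{casi} (i.e.\ $\cas_\chi=\unm I_\qg$) and \eqref{casi2} (i.e.\ $-\tr(\ad Z|_\qg)^2=(a_l-1)\kil_\hg(Z,Z)$), with the cross-vanishing $\ricci(g)(\overline{X},\overline{Z})=\ricci(g)(\overline{Y},\overline{Z})=0$ following from the same block-parity argument. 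Your bracket projections and sanity checks (recovering Proposition \ref{ricHHK} at $x_4=0$, symmetry of the $\pg_1\times\pg_2$ entry) are consistent with the paper's computation, so the remaining work is only the bookkeeping you already flag.
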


\begin{proof} 
Since $H/K$ is a symmetric space, $[\qg,\qg]\subset\kg$, so the Lie brackets between the $\qg_i$'s and $\pg_i$'s satisfy: 
\begin{align}
& [\qg_1,\qg_1]\subset\qg_3+\Delta\kg, \qquad [\qg_2,\qg_2]\subset\qg_3+\Delta\kg, 
\qquad [\qg_1,\qg_2]\subset\qg_3+\Delta\kg, \notag \\ 
&[\qg_3,\qg_3]\subset\Delta\kg, \qquad [\qg_3,\qg_1]\subset\qg_1, \qquad [\qg_3,\qg_2]\subset\qg_1+\qg_2 =\pg_1+\pg_2, \label{LB}\\ 
&[\pg_2,\qg_1]=0, \qquad [\pg_2,\qg_2]\subset\qg_3+\Delta\kg, \qquad [\pg_2,\qg_3]\subset\qg_1+\qg_2,\; \pg_2. \notag
\end{align}
The following formula will be very useful in the proof:
\begin{equation}\label{casi}
\sum_{\substack{\alpha,\beta}} (-\kil_\hg)([X,e_\alpha],Z_\beta)(-\kil_\hg)([Y,e_\alpha],Z_\beta) = -\unm\kil_\hg(X,Y), \qquad\forall X,Y\in\qg.
\end{equation}
This follows from the fact that the left hand side is equal to both $-\tr{\ad{X}\ad{Y}}|_\kg$ and $-\tr{\ad{X}\ad{Y}}|_\qg$, or alternatively, by using that $\cas_{\chi,-\kil_\hg|_\kg}=\unm I_\qg$ (see \S\ref{Est-sec}).  On the other hand, using that $\kil_{\kg_l}=a_l\kil_\hg|_{\kg_l}$, we obtain another useful formula: 
\begin{equation}\label{casi2}
\sum_{\substack{\alpha,\beta}} (-\kil_\hg)([Z,e_\alpha],e_\beta)^2 
= -\tr{(\ad{Z}|_\qg)^2} =  \kil_\kg(Z,Z) - \kil_\hg(Z,Z) 
= (a_l-1)\kil_\hg(Z,Z), 
\end{equation}
for any $Z\in\kg_l$ (recall that $a_0=0$).  

We now start computing the Ricci tensor.  If $\overline{X}=(X,0)\in\qg_1$, $X\in\qg$, then by \eqref{Rc} and \eqref{LB},  
\begin{align*}
\ricci(g)(\overline{X},\overline{X}) 
=& -\unm\sum_{\substack{\alpha,\beta}} g([\overline{X},Y^1_\alpha]_\pg,Y^3_\beta)^2 
 -\unm\sum_{\substack{\alpha,\beta}} g([\overline{X},Y^2_\alpha]_\pg,Y^3_\beta)^2 
 -\unm\sum_{\substack{\alpha,\beta}} g([\overline{X},Y^3_\beta]_\pg,Y^1_\alpha)^2 \\
&+\unm\sum_{\substack{\alpha,\beta}} g([Y^1_\alpha,Y^3_\beta]_\pg,\overline{X})^2 
+ \unm\sum_{\substack{\alpha,\beta}} g([Y^2_\alpha,Y^3_\beta]_\pg,\overline{X})^2 
- \unm\kil_\ggo(\overline{X},\overline{X}) \\ 
=& -\unm\sum_{\substack{\alpha,\beta}} \tfrac{1}{x_12x_3}g(([(X,0),(e_\alpha,0)]_{\qg_3},(Z_\beta,-Z_\beta))^2 \\
& -\unm\sum_{\substack{\alpha,\beta}} \tfrac{1}{x_1(x_1x_2-x_4^2)2x_3} g(([(X,0),(-x_4e_\alpha,x_1e_\alpha)]_{\qg_3},(Z_\beta,-Z_\beta))^2 \\
& -\unm\sum_{\substack{\alpha,\beta}} \tfrac{1}{2x_3x_1}g(([(X,0),(Z_\beta,-Z_\beta)],(e_\alpha,0))^2 \\
& +\unm\sum_{\substack{\alpha,\beta}} \tfrac{1}{x_12x_3}g(([(e_\alpha,0),(Z_\beta,-Z_\beta)],(X,0))^2 \\
& + \unm\sum_{\substack{\alpha,\beta}} \tfrac{1}{x_1(x_1x_2-x_4^2)2x_3} g([(-x_4e_\alpha,x_1e_\alpha),(Z_\beta,-Z_\beta)],(X,0))^2 
- \unm\kil_\hg(X,X).
\end{align*}

By using \eqref{defg4} and $\kil_\ggo(\Delta\kg,\qg_3)=0$, we obtain that 
\begin{align*}
&\ricci(g)(\overline{X},\overline{X}) \\
=& -\unm\sum_{\substack{\alpha,\beta}} \tfrac{x_3}{x_12} (-\kil_\ggo)([X,e_\alpha],0),(Z_\beta,-Z_\beta))^2 \\
& -\unm\sum_{\substack{\alpha,\beta}} \tfrac{x_3}{x_1(x_1x_2-x_4^2)2} (-\kil_\ggo)((-x_4[X,e_\alpha],0),(Z_\beta,-Z_\beta))^2 \\
& -\unm\sum_{\substack{\alpha,\beta}} \tfrac{x_1}{2x_3}(-\kil_\hg)([X,Z_\beta],e_\alpha)^2 
+\unm\sum_{\substack{\alpha,\beta}} \tfrac{x_1}{2x_3}(-\kil_\hg)([e_\alpha,Z_\beta],X)^2 \\
& +  \unm\sum_{\substack{\alpha,\beta}} \tfrac{1}{x_1(x_1x_2-x_4^2)2x_3} 
g((-x_4[e_\alpha,Z_\beta],-x_1[e_\alpha,Z_\beta]),(X,0))^2 
- \unm\kil_\hg(X,X).
\end{align*}
Since the third line vanishes, it follows from \eqref{casi} that
\begin{align*}
&\ricci(g)(\overline{X},\overline{X}) \\
=& -\unm\sum_{\substack{\alpha,\beta}} \tfrac{x_3}{x_12} (-\kil_\hg)([X,e_\alpha],Z_\beta)^2 
 -\unm\sum_{\substack{\alpha,\beta}} \tfrac{x_3x_4^2}{x_1(x_1x_2-x_4^2)2} (-\kil_\hg)([X,e_\alpha],Z_\beta)^2 \\
& +  \unm\sum_{\substack{\alpha,\beta}} \tfrac{4x_1^2x_4^2}{x_1(x_1x_2-x_4^2)2x_3} 
(-\kil_\hg)([e_\alpha,Z_\beta],X)^2 
- \unm\kil_\hg(X,X)\\ 
=& \left(\tfrac{x_3}{8x_1} + \tfrac{x_3x_4^2}{8x_1(x_1x_2-x_4^2)} 
-\tfrac{x_1x_4^2}{2(x_1x_2-x_4^2)x_3} - \unm\right)\kil_\hg(X,X), 
\end{align*}
as was to be shown.  

For $\overline{Y}=(0,Y)\in\pg_2$, $Y\in\qg$, we obtain from \eqref{Rc} and \eqref{LB} that
\begin{align*}
\ricci(g)(\overline{Y},\overline{Y}) 
=&  -\unm\sum_{\substack{\alpha,\beta}} g([\overline{Y},Y^2_\alpha]_\pg,Y^3_\beta)^2 
 -\unm\sum_{\substack{\alpha,\beta}} g([\overline{Y},Y^3_\beta]_\pg,Y^2_\alpha)^2 
 -\unm\sum_{\substack{\alpha,\beta}} g([\overline{Y},Y^3_\beta]_\pg,Y^1_\alpha)^2 \\
&+\unm\sum_{\substack{\alpha,\beta}} g([Y^1_\alpha,Y^3_\beta]_\pg,\overline{Y})^2 
+ \unm\sum_{\substack{\alpha,\beta}} g([Y^2_\alpha,Y^3_\beta]_\pg,\overline{Y})^2 
- \unm\kil_\ggo(\overline{Y},\overline{Y}) \\ 
& = -\unm\sum_{\substack{\alpha,\beta}} \tfrac{1}{x_1(x_1x_2-x_4^2)2x_3} g([(0,Y),(-x_4e_\alpha,x_1e_\alpha)]_{\qg_3},(Z_\beta,-Z_\beta))^2 \\
& -\unm\sum_{\substack{\alpha,\beta}} \tfrac{1}{2x_3x_1(x_1x_2-x_4^2)} g(([(0,Y),(Z_\beta,-Z_\beta)],(-x_4e_\alpha,x_1e_\alpha))^2 \\
& -\unm\sum_{\substack{\alpha,\beta}} \tfrac{1}{2x_3x_1} g(([(0,Y),(Z_\beta,-Z_\beta)],(e_\alpha,0))^2 \\
& +\unm\sum_{\substack{\alpha,\beta}} \tfrac{1}{x_12x_3}g(([(e_\alpha,0),(Z_\beta,-Z_\beta)],(0,Y))^2 \\
& + \unm\sum_{\substack{\alpha,\beta}} \tfrac{1}{x_1(x_1x_2-x_4^2)2x_3} g([(-x_4e_\alpha,x_1e_\alpha),(Z_\beta,-Z_\beta)],(0,Y))^2 
- \unm\kil_\hg(Y,Y).  
\end{align*}
It now follows from \eqref{defg4} and $\kil_\ggo(\Delta\kg,\qg_3)=0$ that 
\begin{align*}
&\ricci(g)(\overline{Y},\overline{Y}) \\
=& -\unm\sum_{\substack{\alpha,\beta}} \tfrac{x_3}{x_1(x_1x_2-x_4^2)2} (-\kil_\ggo)((0,x_1[Y,e_\alpha]),(Z_\beta,-Z_\beta))^2 \\
& -\unm\sum_{\substack{\alpha,\beta}} \tfrac{1}{2x_3x_1(x_1x_2-x_4^2)} g((0,-[Y,Z_\beta]),(-x_4e_\alpha,x_1e_\alpha))^2 \\
& -\unm\sum_{\substack{\alpha,\beta}} \tfrac{x_4^2}{2x_3x_1} -\kil_\hg([Y,Z_\beta],e_\alpha)^2 
+\unm\sum_{\substack{\alpha,\beta}} \tfrac{x_4^2}{x_12x_3} -\kil_\hg([e_\alpha,Z_\beta],Y)^2 \\
& +  \unm\sum_{\substack{\alpha,\beta}} \tfrac{1}{x_1(x_1x_2-x_4^2)2x_3} 
g((-x_4[e_\alpha,Z_\beta],-x_1[e_\alpha,Z_\beta]),(0,Y))^2 
- \unm\kil_\hg(Y,Y),
\end{align*}
and since the fourth line vanishes, by using \eqref{casi2} we obtain the formula stated in the proposition as follows:
\begin{align*}
&\ricci(g)(\overline{Y},\overline{Y}) \\
=&  -\unm\sum_{\substack{\alpha,\beta}} \tfrac{x_3x_1^2}{x_1(x_1x_2-x_4^2)2} (-\kil_\hg)([Y,e_\alpha],Z_\beta)^2 
-\unm\sum_{\substack{\alpha,\beta}} \tfrac{(-x_2x_1+x_4^2)^2}{2x_3x_1(x_1x_2-x_4^2)} (-\kil_\hg)([Y,Z_\beta],e_\alpha)^2 \\
& +  \unm\sum_{\substack{\alpha,\beta}} \tfrac{(-x_2x_1-x_4^2)^2}{x_1(x_1x_2-x_4^2)2x_3} 
(-\kil_\hg)([e_\alpha,Z_\beta],Y)^2 
- \unm\kil_\hg(Y,Y)\\ 
=& \left(\tfrac{x_1x_3}{8(x_1x_2-x_4^2)} 
+\tfrac{x_1x_2-x_4^2}{8x_1x_3}
-\tfrac{(x_1x_2+x_4^2)^2}{8x_1(x_1x_2-x_4^2)x_3} - \unm\right)\kil_\hg(Y,Y).  
\end{align*}

We now consider $\overline{X}=(X,0)\in\qg_1$, $X\in\qg$ and $\overline{Y}=(0,Y)\in\pg_2$, $Y\in\qg$ (in particular, $\kil_\ggo(\overline{X},\overline{Y})=0$).  It follows from \eqref{LB} that  
\begin{align*}
&\ricci(g)(\overline{X},\overline{Y}) \\ 
=&-\unm\sum_{\substack{\alpha,\beta}} g([\overline{X},Y^2_\alpha]_\pg,Y^3_\beta)g([\overline{Y},Y^2_\alpha]_\pg,Y^3_\beta) 
-\unm\sum_{\substack{\alpha,\beta}} g([\overline{X},Y^3_\alpha]_{\pg},Y^1_\beta)g([\overline{Y},Y^3_\alpha]_{\pg},Y^1_\beta) \\
& +\unm\sum_{\substack{\alpha,\beta}} 
g([Y^1_\alpha,Y^3_\beta]_{\pg},\overline{X}) g([Y^1_\alpha,Y^3_\beta]_{\pg},\overline{Y})  
+\unm\sum_{\substack{\alpha,\beta}} 
g([Y^2_\alpha,Y^3_\beta]_{\pg},\overline{X}) g([Y^2_\alpha,Y^3_\beta]_{\pg},\overline{Y}),
\end{align*}
that is,
\begin{align*}
\ricci(g)(\overline{X},\overline{Y}) 
=&-\unm\sum_{\substack{\alpha,\beta}} 
\tfrac{1}{x_1(x_1x_2-x_4^2)2x_3} g([(X,0),(-x_4e_\alpha,x_1e_\alpha)]_{\qg_3},(Z_\beta,-Z_\beta)) \\ & \cdot g([(0,Y),(-x_4e_\alpha,x_1e_\alpha)]_{\qg_3},(Z_\beta,-Z_\beta)) \\ 
&-\unm\sum_{\substack{\alpha,\beta}} 
\tfrac{1}{2x_3x_1} g([(X,0),(Z_\beta,-Z_\beta)],(e_\alpha,0)) \\ & \cdot g([(0,Y),(Z_\beta,-Z_\beta)],(e_\alpha,0)) \\
&+\unm\sum_{\substack{\alpha,\beta}} 
\tfrac{1}{x_12x_3} g([(e_\alpha,0),(Z_\beta,-Z_\beta)],(X,0)) \\ & \cdot g([(e_\alpha,0),(Z_\beta,-Z_\beta)],(0,Y)) \\ 
&+\unm\sum_{\substack{\alpha,\beta}} 
\tfrac{1}{x_1(x_1x_2-x_4^2)2x_3} g([(-x_4e_\alpha,x_1e_\alpha),(Z_\beta,-Z_\beta)],(X,0)) \\ & \cdot g([(-x_4e_\alpha,x_1e_\alpha),(Z_\beta,-Z_\beta)],(0,Y)).   
\end{align*}
By using \eqref{defg4} and $\kil_\ggo(\Delta\kg,\qg_3)=0$ we obtain that 
\begin{align*}
\ricci(g)(\overline{X},\overline{Y}) 
=&-\unm\sum_{\substack{\alpha,\beta}} 
\tfrac{x_3^2}{x_1(x_1x_2-x_4^2)2x_3} (-\kil_\ggo)((-x_4[X,e_\alpha],0),(Z_\beta,-Z_\beta)) \\ & \qquad (-\kil_\ggo)((0,x_1[Y,e_\alpha]),(Z_\beta,-Z_\beta)) \\ 
&-\unm\sum_{\substack{\alpha,\beta}} 
\tfrac{1}{2x_3x_1} g(([X,Z_\beta],0),(e_\alpha,0)) g((0,-[Y,Z_\beta]),(e_\alpha,0)) \\ 
&+\unm\sum_{\substack{\alpha,\beta}} 
\tfrac{1}{x_12x_3} g(([e_\alpha,Z_\beta],0),(X,0)) g(([e_\alpha,Z_\beta],0),(0,Y)) \\ 
&+\unm\sum_{\substack{\alpha,\beta}} 
\tfrac{1}{x_1(x_1x_2-x_4^2)2x_3} g((-x_4[e_\alpha,Z_\beta],-x_1[e_\alpha,Z_\beta]),(X,0)) \\ & \qquad g((-x_4[e_\alpha,Z_\beta],-x_1[e_\alpha,Z_\beta],(0,Y)),
\end{align*}
and so by \eqref{casi},
\begin{align*}
&\ricci(g)(\overline{X},\overline{Y}) \\
=&-\unm\sum_{\substack{\alpha,\beta}} 
\tfrac{x_3(-x_4)(-x_1)}{x_1(x_1x_2-x_4^2)2} (-\kil_\hg)([X,e_\alpha],Z_\beta) (-\kil_\hg)([Y,e_\alpha],Z_\beta) \\ 
&-\unm\sum_{\substack{\alpha,\beta}} 
\tfrac{x_1(-x_4)}{2x_3x_1} (-\kil_\hg)([X,Z_\beta],e_\alpha) (-\kil_\hg)([Y,Z_\beta],e_\alpha) \\ 
&+\unm\sum_{\substack{\alpha,\beta}} 
\tfrac{x_1x_4}{x_12x_3} (-\kil_\hg)([e_\alpha,Z_\beta],X) (-\kil_\hg)([e_\alpha,Z_\beta],Y) \\ 
&+\unm\sum_{\substack{\alpha,\beta}} 
\tfrac{(x_1(-x_4)+x_4(-x_1))(x_4(-x_4)+x_2(-x_1))}{x_1(x_1x_2-x_4^2)2x_3} (-\kil_\hg)([e_\alpha,Z_\beta],X) (-\kil_\hg)([e_\alpha,Z_\beta],Y]) \\ 
=&\left(-\tfrac{x_3x_4}{4(x_1x_2-x_4^2)} 
+\tfrac{x_4}{2x_3}
+ \tfrac{x_4(x_1x_2+x_4^2)}{2(x_1x_2-x_4^2)x_3}\right) (-\unm)\kil_\hg(X,Y) \\ 
=&\left(\tfrac{x_3x_4}{8(x_1x_2-x_4^2)} 
-\tfrac{x_4}{4x_3}
- \tfrac{x_4(x_1x_2+x_4^2)}{4(x_1x_2-x_4^2)x_3}\right) \kil_\hg(X,Y), 
\end{align*}
as we had to show.  

If $\overline{Z}=(Z,-Z)\in\qg_3$, $Z\in\kg_l$, then 
\begin{align*}
\ricci(g)(\overline{Z},\overline{Z})  
=& -\unm\sum_{\substack{\alpha,\beta}} g([\overline{Z},Y^1_\alpha]_\pg,Y^1_\beta)^2 
-\unm\sum_{\substack{\alpha,\beta}} g([\overline{Z},Y^2_\alpha]_\pg,Y^1_\beta)^2 
-\unm\sum_{\substack{\alpha,\beta}} g([\overline{Z},Y^2_\alpha]_\pg,Y^2_\beta)^2  \\ 
&+\unc\sum_{\substack{\alpha,\beta}} g([Y^1_\alpha,Y^1_\beta]_\pg,\overline{Z})^2 
+\unc\sum_{\substack{\alpha,\beta}} g([Y^2_\alpha,Y^2_\beta]_\pg,\overline{Z})^2 
+\unm\sum_{\substack{\alpha,\beta}} g([Y^1_\alpha,Y^2_\beta]_\pg,\overline{Z})^2 \\ 
&- \unm\kil_\ggo(\overline{Z},\overline{Z}),
\end{align*}
or equivalently, 
\begin{align*}
\ricci(g)(\overline{Z},\overline{Z})  
=& -\unm\sum_{\substack{\alpha,\beta}} \tfrac{1}{x_1^2} g([(Z,-Z),(e_\alpha,0)],(e_\beta,0))^2 \\
& -\unm\sum_{\substack{\alpha,\beta}} \tfrac{1}{x_1^2(x_1x_2-x_4^2)} g([(Z,-Z),(-x_4e_\alpha,x_1e_\alpha)],(e_\beta,0))^2 \\ 
& -\unm\sum_{\substack{\alpha,\beta}} \tfrac{1}{x_1^2(x_1x_2-x_4^2)^2} g([(Z,-Z),(-x_4e_\alpha,x_1e_\alpha)],(-x_4e_\beta,x_1e_\beta))^2 \\
&+\unc\sum_{\substack{\alpha,\beta}}  \tfrac{1}{x_1^2} g([(e_\alpha,0),(e_\beta,0)]_{\qg_3},(Z,-Z))^2 \\
&+\unc\sum_{\substack{\alpha,\beta}}  \tfrac{1}{x_1^2(x_1x_2-x_4^2)^2} g([(-x_4e_\alpha,x_1e_\alpha),(-x_4e_\beta,x_1e_\beta)]_{\qg_3},(Z,-Z))^2 \\
&+\unm\sum_{\substack{\alpha,\beta}} \tfrac{1}{x_1^2(x_1x_2-x_4^2)}g([(e_\alpha,0),(-x_4e_\beta,x_1e_\beta)]_{\qg_3},(Z,-Z))^2 
-\kil_\hg(Z,Z).   
\end{align*}
Using that $[\qg,\qg]\subset\kg$ and that $(Z,W)_{\qg_3}=(\tfrac{Z-W}{2}, -\tfrac{Z-W}{2})$ for any $Z,W\in\kg$, we obtain that  
\begin{align*}
\ricci(g)(\overline{Z},\overline{Z})  
=& -\unm\sum_{\substack{\alpha,\beta}} \tfrac{1}{x_1^2} g(([Z,e_\alpha],0),(e_\beta,0))^2 \\
& -\unm\sum_{\substack{\alpha,\beta}} \tfrac{1}{x_1^2(x_1x_2-x_4^2)} g((-x_4[Z,e_\alpha],-x_1[Z,e_\alpha]),(e_\beta,0))^2 \\
& -\unm\sum_{\substack{\alpha,\beta}} \tfrac{1}{x_1^2(x_1x_2-x_4^2)^2} g((-x_4[Z,e_\alpha],-x_1[Z,e_\alpha]),(-x_4e_\beta,x_1e_\beta))^2 \\
&+\unc\sum_{\substack{\alpha,\beta}}  \tfrac{1}{x_1^2} g(\unm([e_\alpha,e_\beta],-[e_\alpha,e_\beta]),(Z,-Z))^2 \\
&+\unc\sum_{\substack{\alpha,\beta}}  \tfrac{1}{x_1^2(x_1x_2-x_4^2)^2} g(\tfrac{x_4^2-x_1^2}{2}([e_\alpha,e_\beta],-[e_\alpha,e_\beta]),(Z,-Z))^2 \\
&+\unm\sum_{\substack{\alpha,\beta}} \tfrac{1}{x_1^2(x_1x_2-x_4^2)}g(-\tfrac{x_4}{2}([e_\alpha,e_\beta],-[e_\alpha,e_\beta]),(Z,-Z))^2 -\kil_\hg(Z,Z), 
\end{align*}
which gives the stated formula:
\begin{align*}
& \ricci(g)(\overline{Z},\overline{Z})  \\ 
=& -\unm\sum_{\substack{\alpha,\beta}} \tfrac{x_1^2}{x_1^2} (-\kil_\hg)([Z,e_\alpha],e_\beta)^2 
 -\unm\sum_{\substack{\alpha,\beta}} \tfrac{(x_1(-x_4)+x_4(-x_1))^2}{x_1^2(x_1x_2-x_4^2)} (-\kil_\hg)([Z,e_\alpha],e_\beta)^2 \\
& -\unm\sum_{\substack{\alpha,\beta}} \tfrac{\left(x_1x_4^2+x_2(-x_1^2)+x_4(-x_4x_1+x_1x_4)\right)^2}{x_1^2(x_1x_2-x_4^2)^2} (-\kil_\hg)([Z,e_\alpha],e_\beta)^2 
+\unc\sum_{\substack{\alpha,\beta}}  \tfrac{x_3^2}{x_1^2} (-\kil_\hg)([e_\alpha,e_\beta],Z)^2 \\
& +\unc\sum_{\substack{\alpha,\beta}}  \tfrac{x_3^2(x_4^2-x_1^2)^2}{x_1^2(x_1x_2-x_4^2)^2} (-\kil_\hg)([e_\alpha,e_\beta],Z)^2
+\unm\sum_{\substack{\alpha,\beta}} \tfrac{x_3^2x_4^2}{x_1^2(x_1x_2-x_4^2)}(-\kil_\hg)([e_\alpha,e_\beta],Z)^2 -\kil_\hg(Z,Z) \\ 
=& \left((a_l-1)\left(-1
-\tfrac{2x_4^2}{x_1x_2-x_4^2}
+ \tfrac{x_3^2}{4x_1^2} 
+\tfrac{x_3^2(x_1^2-x_4^2)^2}{4x_1^2(x_1x_2-x_4^2)^2} 
+\tfrac{x_3^2x_4^2}{2x_1^2(x_1x_2-x_4^2)}\right) - 1\right)\kil_\hg(Z,Z) \\ 
=& \left(-a_l+(a_l-1)\left(
-\tfrac{2x_4^2}{x_1x_2-x_4^2}
+ \tfrac{x_3^2}{4x_1^2} 
+\tfrac{x_3^2(x_1^2-x_4^2)^2}{4x_1^2(x_1x_2-x_4^2)^2} 
+\tfrac{x_3^2x_4^2}{2x_1^2(x_1x_2-x_4^2)}\right) \right)\kil_\hg(Z,Z).    
\end{align*}

Finally, it is easy to check by using \eqref{LB} and \eqref{Rc} that 
$$
\ricci(g)(\overline{X},\overline{Z}) = \ricci(g)(\overline{Y},\overline{Z}) =0, \qquad\forall \overline{X}\in\pg_1, \quad \overline{Y}\in\pg_2, \quad \overline{Z}\in\pg_3,
$$
concluding the proof of the proposition.  
\end{proof}

\begin{corollary}\label{gE}
A metric $g=(x_1,x_2,1,x_4)$, $x_4\ne 0$, is Einstein, say $\ricci(g)=\rho g$, if and only if $r_1=r_2=r_{12} =r_{3,0}=r_{3,1}=\dots=r_{3,t}=\rho$, where
\begin{align*}
r_1 
=& - \frac{x_2}{8x_1(x_1x_2-x_4^2)} 
+\frac{x_4^2}{2(x_1x_2-x_4^2)} + \frac{1}{2x_1},\\ 
r_2 
=& -\frac{x_1}{8x_2(x_1x_2-x_4^2)} 
+\frac{x_4^2}{2(x_1x_2-x_4^2)} + \frac{1}{2x_2}, \\ 
r_{12}  
=& \frac{4x_1x_2-1}{8(x_1x_2-x_4^2)}, \qquad
r_{3,l}= \frac{1}{2}(a_l(1-R)+R) \quad (\mbox{see \eqref{defR}}).
\end{align*}
\end{corollary}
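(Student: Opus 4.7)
The plan is to unpack Proposition~\ref{ri}, specialized to $x_3=1$, into an equivalent set of scalar equations expressing $\ricci(g)=\rho g$. Since Proposition~\ref{ri} already gives $\ricci(g)(\pg_1,\pg_3)=\ricci(g)(\pg_2,\pg_3)=0$, and since by construction \eqref{defg4} one also has $g(\pg_1,\pg_3)=g(\pg_2,\pg_3)=0$, the Einstein equation decouples into a $\pg_1\oplus\pg_2$ block and independent blocks on each irreducible summand $\pg_3^l$ of $\pg_3$.

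First I would handle the blocks $\pg_3^l$. Each $\kg_l$ is either $1$-dimensional (the center, $l=0$) or simple, so the $\Ad(K)$-representation $\pg_3^l\simeq\kg_l$ is irreducible of real type; by Schur, both $\ricci(g)|_{\pg_3^l\times\pg_3^l}$ and $g|_{\pg_3^l\times\pg_3^l}$ must be scalar multiples of $(-\kil_\hg)|_{\kg_l}$. Reading off these scalars from Proposition~\ref{ri} (with $x_3=1$) and from \eqref{defg4}, the block equation $\ricci(g)|_{\pg_3^l}=\rho\,g|_{\pg_3^l}$ becomes $(a_l(R-1)-R)\,\kil_\hg(Z,Z)=-2\rho\,\kil_\hg(Z,Z)$, i.e.\ $\rho=\unm(a_l(1-R)+R)=r_{3,l}$.

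For the block $\pg_1\oplus\pg_2$, I would use that $H/K$ is an irreducible symmetric space, hence $\qg$ is $\Ad(K)$-irreducible of real type, and $\pg_1\simeq\pg_2\simeq\qg$. By Schur, the space of $\Ad(K)$-invariant symmetric bilinear forms on $\pg_1\oplus\pg_2$ is three-dimensional, with basis obtained by restricting $-\kil_\hg$ to the diagonal blocks $\pg_1\times\pg_1$, $\pg_2\times\pg_2$ and to the cross block $\pg_1\times\pg_2$. The Einstein condition on this block is therefore equivalent to the three scalar equalities
\[
\rho=-A_1/x_1,\qquad \rho=-A_2/x_2,\qquad \rho=-A_{12}/x_4,
\]
where $A_1,A_2,A_{12}$ are the bracketed coefficients of $\kil_\hg(X,X)$, $\kil_\hg(Y,Y)$, $\kil_\hg(X,Y)$ in Proposition~\ref{ri} at $x_3=1$. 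A short simplification, using the identity
\[
\tfrac{1}{8x_i^2}+\tfrac{x_4^2}{8x_i^2(x_1x_2-x_4^2)}=\tfrac{x_j}{8x_i(x_1x_2-x_4^2)}\qquad (\{i,j\}=\{1,2\}),
\]
collapses these onto a common denominator $8x_i(x_1x_2-x_4^2)$ and produces exactly the stated expressions for $r_1$ and $r_2$; for the cross block, clearing $x_4$ (allowed since $x_4\ne 0$) and putting everything over $8(x_1x_2-x_4^2)$ gives $r_{12}=(4x_1x_2-1)/(8(x_1x_2-x_4^2))$.

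The main obstacle is purely algebraic bookkeeping: no new geometric input is required, and the hypothesis $x_4\ne 0$ enters only to ensure that the cross equation $\ricci(g)(\overline{X},\overline{Y})=\rho\,g(\overline{X},\overline{Y})$ is non-vacuous and genuinely determines $\rho$ rather than being the trivial identity $0=0$.
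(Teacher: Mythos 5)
Your proposal is correct and follows essentially the same route as the paper: both read off the scalar equations from Proposition~\ref{ri} with $x_3=1$, divide each block equation by the corresponding $g$-coefficient from \eqref{defg4} (using $x_4\ne 0$ for the cross block), and simplify algebraically to the stated $r_1$, $r_2$, $r_{12}$, $r_{3,l}$. The Schur-lemma remarks you add are harmless but not needed, since Proposition~\ref{ri} already exhibits both $\ricci(g)$ and $g$ as explicit multiples of $\kil_\hg$ on each block.
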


\begin{proof}
Using Proposition \ref{ri} and the following equalities from \eqref{defg4},
$$
\begin{array}{c}
g(\overline{X},\overline{X}) = -x_1\kil_\hg(X,X), \quad   
g(\overline{Y},\overline{Y}) =-x_2\kil_\hg(Y,Y), \quad  
g(\overline{X},\overline{Y}) = -x_4\kil_\hg(X,Y)\\ 
g(\overline{Z},\overline{Z}) = -2x_3\kil_\hg(Z,Z), \quad 
g(\overline{X},\overline{Z}) = 0, \quad 
g(\overline{Y},\overline{Z}) = 0,
\end{array}
$$
we obtain that $\ricci(g)=\rho g$ if and only if
\begin{align*}
-x_1\rho =& \tfrac{1}{8x_1} + \tfrac{x_4^2}{8x_1(x_1x_2-x_4^2)} 
-\tfrac{x_1x_4^2}{2(x_1x_2-x_4^2)} - \unm,\\ 
-x_2\rho =& \tfrac{x_1}{8(x_1x_2-x_4^2)} 
+\tfrac{x_1x_2-x_4^2}{8x_1}
-\tfrac{(x_1x_2+x_4^2)^2}{8x_1(x_1x_2-x_4^2)} - \unm, \\  
-x_4\rho =& \tfrac{x_4}{8(x_1x_2-x_4^2)} 
-\tfrac{x_4}{4}
- \tfrac{x_4(x_1x_2+x_4^2)}{4(x_1x_2-x_4^2)}, \\ 
-2\rho
=& -a_l + (1-a_l)\left(
\tfrac{2x_4^2}{x_1x_2-x_4^2}
- \tfrac{1}{4x_1^2} 
-\tfrac{(x_1^2-x_4^2)^2}{4x_1^2(x_1x_2-x_4^2)^2} 
-\tfrac{x_4^2}{2x_1^2(x_1x_2-x_4^2)}\right), 
\end{align*}
from which the corollary easily follows.  
\end{proof}

The non-diagonal setting provides many Einstein metrics different from those given by Theorem \ref{HHK-E}, including one which exists on any $M=H\times H/\Delta K$ with $H/K$ an irreducible symmetric space.    

\begin{theorem}\label{End}
Let $H/K$ be an irreducible symmetric space.  
\begin{enumerate}[{\rm (i)}]
\item If $\kil_\kg=a\kil_\hg|_\kg$, $a>0$ (see Table \ref{table1}), then the following are Einstein metrics on the homogeneous space $M=H\times H/\Delta K$:
\begin{enumerate}[{\small $\bullet$}]
\item $g_1=(x_+,x_+,1,0)$ and $g_2=(x_-,x_-,1,0)$ if $a<\unm$ (see Theorem \ref{HHK-E} and Remark \ref{E2z-rem2}), where 
$$
x_\pm=\frac{1\pm \sqrt{1 - a(3-2a)}}{2a}.  
$$ 
\item $g_3=(1,1,1,y_+)$ and $g_4=(1,1,1,y_-)$ if $a>\unm$, where 
$$
y_\pm=\pm\frac{1}{2}\sqrt{\frac{2a-1}{2-a}}.  
$$
\item $g_5=\left(\unm,\frac{3}{2},1,\unm\right)$ and $g_6=\left(\frac{3}{2},\unm,1,\unm\right)$.  
\end{enumerate}
Moreover, in the case when $K$ is simple, these are all the $H\times H$-invariant Einstein metrics (up to scaling) on $M=H\times H/\Delta K$.   

\item The normalized scalar curvatures $\scalar_N(g_i):=\scalar(g_i)(\det_{\gk}{g_i})^{\frac{1}{\dim{M}}}$ are given as follows:
$$
\scalar_N(g_1)=\frac{(2n+d)(4x_+-1)}{8(x_+)^{2\alpha}}, \qquad 
\scalar_N(g_2)=\frac{(2n+d)(4x_--1)}{8(x_-)^{2\alpha}}, 
$$
$$
\scalar_N(g_3)=\scalar_N(g_4)=\frac{3(2n+d)}{8} \left(\frac{4(2-a)}{3(3-2a)}\right)^\alpha, \qquad 
\scalar_N(g_5)=\scalar_N(g_6)=(2n+d)2^{\alpha-2}, 
$$
where $\alpha=\tfrac{n+d}{2n+d}$.   

\item The metrics $g_5$ and $g_6$ are also Einstein on $M=H\times H/\Delta K$ for any $H/K$.  
\end{enumerate}
\end{theorem}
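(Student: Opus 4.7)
The plan is to exploit Corollary \ref{gE} directly. After rescaling to set $x_3 = 1$, the case $x_4 = 0$ is subsumed by Theorem \ref{HHK-E}: since $\kappa = 1/2$ for symmetric $H/K$, the existence inequality \eqref{cond2} collapses to $a < 1/2$, producing $g_1 = (x_+,x_+,1,0)$ and $g_2 = (x_-,x_-,1,0)$ with the stated $x_\pm$. For $x_4 \ne 0$ I would begin by factoring
$$
r_1 - r_2 = \frac{x_1 - x_2}{8 x_1 x_2 (x_1 x_2 - x_4^2)} \bigl[(x_1 + x_2) - 4(x_1 x_2 - x_4^2)\bigr],
$$
which splits the analysis into Case A ($x_1 = x_2$) and Case B ($x_1 + x_2 = 4(x_1 x_2 - x_4^2)$). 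Working through $r_1 = r_{12}$ in Case A collapses immediately to $x_1 = 1$, and then $r_1 = r_3$ solves to $x_4^2 = (2a-1)/(4(2-a))$, giving the two real solutions $g_3, g_4$ exactly when $a > 1/2$.

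Case B is the technical heart. From $r_1 = r_{12}$ combined with the Case B relation one obtains $x_1 + x_2 = 2$ and $x_1 x_2 - x_4^2 = 1/2$, so I would parametrize $x_1 = 1+s$, $x_2 = 1-s$, $x_4^2 = 1/2 - s^2$ with $|s| < 1/\sqrt{2}$. The crucial step is then to evaluate \eqref{defR} under this parametrization. Using the identities $(1+s)^2 + s^2 - 1/2 = 2(s+1/2)^2$ and $4(s+1/2)^4 + 1/2 - s^2 = (1+s)^2(4s^2+1) - 1/4$, the expression collapses to the remarkably clean formula $R(s) = 8s^2 - 1$. Inserting this into $r_1 = r_3$ gives $(1 - 4s^2)(5 - 4a) = 0$, and since $a < 1 < 5/4$ this forces $s = \pm 1/2$, producing exactly $g_5$ and $g_6$.

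Part (ii) is a routine substitution using $\scalar(g) = (2n+d)\rho$ together with the block-determinant identity $\det_{\gk}(g) = (x_1 x_2 - x_4^2)^n x_3^d$, plugging in the value of $\rho = r_1$ computed in each case. For Part (iii), the only change when dropping the hypothesis $\kil_\kg = a\kil_\hg|_\kg$ is that the equations $r_{3,l} = \rho$ must hold separately for each simple or abelian ideal $\kg_l$ of $\kg$. But at $g_5$ (and $g_6$) one has $R = 1$, hence $r_{3,l} = \tfrac{1}{2}(a_l(1-R) + R) = \tfrac{1}{2}$ for every $l$ independently of $a_l$, while the equalities $r_1 = r_2 = r_{12} = \tfrac{1}{2}$ already hold without reference to $a$. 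The one genuine obstacle in the whole argument is the algebraic collapse $R(s) = 8s^2 - 1$, which is the engine driving both the classification in Case B and the universality of $g_5, g_6$, yet is entirely hidden in the unsimplified form of \eqref{defR}.
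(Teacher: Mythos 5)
Your case analysis for metrics of the form $(x_1,x_2,x_3,x_4)$ is correct and follows essentially the paper's route: the same reduction via $r_1=r_2$ and $r_1=r_{12}$ to the two branches $x_1=x_2=1$ and $x_1+x_2=2$, $x_1x_2-x_4^2=\unm$, then solving the remaining equation $r_{3,l}=\rho$. Your parametrization $x_1=1+s$, $x_2=1-s$, $x_4^2=\unm-s^2$ with the collapse $R=8s^2-1$ checks out (it yields $(1-4s^2)(5-4a_l)=0$, hence $s=\pm\unm$, and explains in one stroke why $R=1$ at $g_5,g_6$), and it is a cleaner substitute for the paper's ``long but straightforward algebraic manipulation'' leading to $(4x_1^2-8x_1+3)(5-4a_l)=0$. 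Your treatments of part (ii) (via $\scalar=(2n+d)\rho$ and $\det_{\gk}g=(x_1x_2-x_4^2)^nx_3^d$) and of part (iii) (the equations $r_{3,l}=\rho$ hold for every ideal $\kg_l$ because $R=1$ kills the $a_l$-dependence) agree with the paper.

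The genuine gap is in part (i): the theorem asserts that, up to scaling, these are \emph{all} the $H\times H$-invariant Einstein metrics, but your argument only classifies Einstein metrics inside the four-parameter family $(x_1,x_2,x_3,x_4)$ and never verifies that this family exhausts $\mca^G$. It does so only when $\qg$ is irreducible of real type and $K$ is simple (or one-dimensional). For the spaces of Table \ref{table1} one must therefore (a) rule out complex type for $\qg$ --- the paper does this by observing that complex type occurs exactly for hermitian symmetric spaces, which are excluded since $K$ is semisimple here --- and (b) handle the non-simple isotropy cases $\SO(2m)/\SO(m)\times\SO(m)$ and $\Spe(2m)/\Spe(m)\times\Spe(m)$, where $\pg_3\simeq\kg_1\oplus\kg_2$ splits and an invariant metric carries two independent parameters $x_{3,1},x_{3,2}$ on $\pg_3$ beyond your ansatz; the paper shows, using that the Casimir operators of $\qg$ with respect to $\kg_1$ and $\kg_2$ coincide (cf.\ Proposition \ref{ricHHK}, (i)--(ii)), that the Einstein condition forces $x_{3,1}=x_{3,2}$, which reduces to your setting. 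Without these two points your proof only shows that $g_1,\dots,g_6$ are Einstein and are the only Einstein metrics of the special form considered, not the full classification claimed in part (i).
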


\begin{remark}
The only irreducible symmetric space with $a=0$ is $\SU(2)/S^1$, giving rise to $M=\SU(2)\times\SU(2)/\Delta S^1$.  A complete classification of $G$-invariant Einstein metrics on $M$ was obtained in \cite{AlkDttFrr}, see also  \cite[Example 6.10]{BhmWngZll} for further information. 
\end{remark} 

\begin{remark}\label{iss}
In addition to those in Table \ref{table1} satisfying that $\kil_\kg=a\kil_\hg|_\kg$, $a>0$ as in part (i), the remaining irreducible symmetric spaces $H/K$ are given by (see \cite[Table 7.102]{Bss}): 
\begin{enumerate}[{\small $\bullet$}]
\item $\SU(p+q)/\SU(p)\times\SU(q)\times S^1$, \quad $2\leq p\leq q$,   
\item $\SO(2m)/\U(m)$, \quad $m\geq 3$,  
\item $\SO(p+q)/\SO(p)\times\SO(q)$, \quad $2\leq p<q$, \quad $p+q\geq 7$,  
\item $\Spe(m)/\SU(m)\times S^1$, \quad $m\geq 2$, 
\item $\Spe(p+q)/\Spe(p)\times\Spe(q)$, \quad $1\leq p<q$,  
\item $G_2/\SO(4)$, \quad 
$F_4/\Spe(3)\times\SU(2)$, \quad
 $E_6/\SU(6)\times\SU(2)$, \quad
 $E_6/\SO(10)\times S^1$, 
\item $E_7/\SO(12)\times\SU(2)$, \quad
 $E_7/E_6\times S^1$, \quad
 $E_8/E_7\times\SU(2)$.  
\end{enumerate}
Remarkably, according to part (iii) of the above theorem, $g_5$ and $g_6$ are also Einstein on each of the corresponding homogeneous spaces $M=H\times H/\Delta K$.  On the other hand, there exist Einstein metrics on some of these spaces which are given as different scalings of $\gk$ on each $\pg_3^l$, $l=0,1,\dots,t$ (e.g., for $H/K=\Spe(m)/\U(m)$, see \cite{Rical}).  
\end{remark} 

\begin{remark}
The automorphism of $G=H\times H$ interchanging the two copies of $H$ defines an isometry between $g_5$ and $g_6$.  On the other hand, $g_3$ and $g_4$ are isometric via $(id,\theta)\in\Aut(G/\Delta K)$, where $\theta$ is the idempotent automorphism of $H$ such that $d\theta|_\kg=I$ and $d\theta|_\qg=-I$.
\end{remark} 

\begin{proof}[Proof of Theorem \ref{End}]
We first assume that a metric of the form $g=(x_1,x_2,x_3,x_4)$ is Einstein.  It follows from Theorem \ref{HHK-E} and Remark \ref{E2z-rem2} that, up to scaling, the only diagonal (i.e., $x_4=0$) Einstein metrics on $M$ are $g_1$ and $g_2$.  We therefore also assume from now on that $x_3=1$ and $x_4\ne 0$ and use the Einstein equations given in Corollary \ref{gE}.  

A simple algebraic manipulation gives that $r_1=r_2$ if and only if 
$$
\tfrac{x_1^2-x_2^2}{8x_1x_2(x_1x_2-x_4^2)}  = \tfrac{x_1-x_2}{2x_1x_2}, 
$$
so either $x_1=x_2$ or $x_1+x_2 = 4(x_1x_2-x_4^2)$.  Furthermore, it is straightforward to see that $r_1=r_{12}$ is equivalent to 
$$
x_1 - x_2 = 4(x_1x_2-x_4^2)(x_1-1);  
$$
in particular, either $x_1=x_2=1$ or $x_1x_2-x_4^2=\unm$ (i.e., $x_1+x_2=2$).    

In the case when $x_1=x_2=1$, we obtain that $R\ne 1$, so $\kil_\kg=a\kil_\hg|_\kg$ by the formulas for $r_{3,l}$ and $r_{12}=r_3:=r_{3,0}=\dots=r_{3,t}$ if and only if 
$$
\tfrac{3}{8(1-x_4^2)} 
= \tfrac{a}{2} + (1-a)\left(
\tfrac{1}{4} 
-\tfrac{3x_4^2}{4(1-x_4^2)}\right), 
$$
which is equivalent to $x_4^2=\tfrac{2a-1}{4(2-a)}$, giving rise to the Einstein metrics $g_3$ and $g_4$.  

The other possibility is therefore $x_1+x_2=2$ and $x_1x_2-x_4^2=\unm$.  A long but straightforward algebraic manipulation gives that under these assumptions, $r_{12}=r_{3,l}$ holds if and only if 
$$
(4x_1^2-8x_1+3)(5-4a_l)=0,
$$
providing the Einstein metrics $g_4$ and $g_5$ independently of $H/K$ since $R=1$ for these metrics.  Thus parts (i) and (iii) follow.  Note that the last sentence in (i) follows from Remark \ref{all} and the fact that the isotropy representation $\qg$ of $H/K$ is of real type as an $\Ad(K)$-representation for all the irreducible symmetric spaces with $\kil_\kg=a\kil_\hg|_\kg$, $a>0$, since the only other possibility is to be of complex type as for, precisely, hermitian symmetric spaces (see \cite{Wlf} or \cite{WngZll1}).  


Concerning part (ii), recall that the values for $\scalar_N(g_1)$ and $\scalar_N(g_2)$ are given in Theorem \ref{HHK-E}.  For the rest of the Einstein metrics we proceed as follows.  If $g=(x_1,x_2,1,x_4)$ is Einstein, then $\det_{\gk}{g}=(x_1x_2-x_4^2)^n$ by \eqref{mat} and it follows from the formula for $r_{12}$ in Corollary \ref{gE} that 
$$
\scalar_N(g) = (2n+d)\tfrac{4x_1x_2-1}{8(x_1x_2-x_4^2)}(x_1x_2-x_4^2)^{\tfrac{n}{2n+d}}
= \tfrac{(2n+d)(4x_1x_2-1)}{8(x_1x_2-x_4^2)^\alpha},
$$
concluding the proof.  
\end{proof}

\begin{table}
{\small 
$$
\begin{array}{c|c|c|c|c}
H/K & m & \scalar_N(g_1) & \scalar_N(g_2) & \scalar_N(g_5)     
\\[2mm] \hline \hline \rule{0pt}{14pt}
\SU(m)/\SO(m) & \geq 3 & \eqref{scg1su} & \eqref{scg2su} &  2(3m^2 + m- 4) 4^{-\tfrac{2m+3}{3m+4}}  
\\[2mm]  \hline \rule{0pt}{14pt}
\SO(2m)/\SO(m)\times\SO(m) & \geq 4 & \eqref{scg1so} & \eqref{scg2so}   &  m(3m-1) 2^{-\frac{4m-1}{3m-1}}   
\\[2mm]  \hline \rule{0pt}{14pt}
G_2/\SU(2)\times\SU(2) & & 7.8598 & 8.0237 &  \frac{11}{2} 2^{\frac{7}{11}} \approx 8.5492  
\\[2mm]  \hline \rule{0pt}{14pt} 
E_6/\Spe(4) &  & 44.0481 & 44.3085 & 30\, 2^{\frac{13}{20}} \approx 47.0750  
\\[2mm]  \hline \rule{0pt}{14pt} 
E_7/\SU(8) &  & 75.0853 & 75.3101 &  \frac{203}{4} 2^{\frac{19}{29}}\approx 79.9213  
\\[2mm]  \hline \rule{0pt}{14pt} 
E_8/\SO(16) &  & 139.8741 & 140.0578 &  94\, 2^{\frac{31}{47}} \approx 148.4839  
\\[2mm] \hline\hline
\end{array}
$$}
\caption{Normalized scalar curvature of Einstein metrics in Theorem \ref{End} such that $a<\unm$ (see Table \ref{table1}).} \label{Sc1}
\end{table}

\begin{table}
{\small 
$$
\begin{array}{c|c|c|c}
H/K & m & \scalar_N(g_3) & \scalar_N(g_5)     
\\[2mm] \hline \hline \rule{0pt}{14pt}
\SU(2m)/\Spe(m) & \geq 2 &  \frac{1}{8}  (6m^2-m-2) \,3^{\frac{m-1}{3m-2}} \left(  \frac{2(3m-1)}{ 2m-1} \right)^{\frac{2m-1}{3m-2}}   &  (6m^2-m-2) 2^{-\frac{4m-3}{3m-2}}  
\\[2mm]  \hline \rule{0pt}{14pt}
\SO(m)/\SO(m-1) & \geq 7 & \frac{1}{16}  (m^2+m-2) \,9^{\frac{1}{m+2}} \left(  \frac{4(m-1)}{ m} \right)^{\frac{m}{m+2}}  &  (m^2+m-2) 4^{-\frac{m+3}{m+2}}  
\\[2mm]  \hline \rule{0pt}{14pt}
\Spe(2m)/\Spe(m)\times\Spe(m) & \geq 1 & (6m^2+m) (\frac{9}{16})^{\tfrac{m}{6m+1}}  \left(  \tfrac{3m+1}{4m+1} \right)^{\tfrac{4m+1}{6m+1}} &  (6m^2+m) 4^{-\frac{m}{6m+1}} 
\\[2mm]  \hline \rule{0pt}{14pt}
F_4/\SO(9) &  & \frac{17}{26} (44)^{\frac{13}{17}}  (39)^{\frac{4}{17}}   \approx 27.9641    & 17\, 2^{\frac{13}{17}} \approx 28.8834 
\\[2mm]  \hline \rule{0pt}{14pt} 
E_6/F_4 &  &  \frac{13}{\sqrt{3} } 10^{\frac{3}{4}}   \approx 42.2068 &  26\, 2^{\frac{3}{4}} \approx 43.7266 
\\[2mm] \hline\hline
\end{array}
$$}
\caption{Normalized scalar curvature of Einstein metrics in Theorem \ref{End} such that $a>\unm$ (see Table \ref{table1}).} \label{Sc2}
\end{table}

The explicit values for the normalized scalar curvatures of the Einstein metrics given in Theorem \ref{End} are given in Tables \ref{Sc1} and \ref{Sc2}, including the following formulas:
\begin{align}
\SU(m)/\SO(m), \quad \scalar(g_1)=\tfrac{  (3m^2+m-4) (3m+2 +4 \sqrt{m+2}) \left(  \tfrac{m-2}{ 1+\sqrt{m+2}}  \right)^{\tfrac{4(m+1)}{3m+4}} }{16 m -32},  \label{scg1su}\\ 
\SU(m)/\SO(m), \quad\scalar(g_2)=\tfrac{  (3m^2+m-4) (3m+2 -4 \sqrt{m+2})  \left(  \tfrac{m-2}{ 1-\sqrt{m+2}} \right)^{\tfrac{4(m+1)}{3m+4}} }{16 m -32}, \label{scg2su}\\
\SO(2m)/\SO(m)^2, \quad\scalar(g_1)=\tfrac{  m(3m-1) (3m-1+2\sqrt{2m}) (\left(  \tfrac{2m-2+ \sqrt{2m}}{2(m-2)}  \right)^{\tfrac{-4m+2}{3m-1}} }{8( m -2)},  \label{scg1so} \\
\SO(2m)/\SO(m)^2, \quad\scalar(g_2)=\tfrac{  m(3m-1) (3m-1-2\sqrt{2m}) (\left(  \tfrac{2m-2- \sqrt{2m}}{2(m-2)}  \right)^{\tfrac{-4m+2}{3m-1}} }{8( m -2)}.  \label{scg2so}
\end{align}

It is straightforward to see that these normalized scalar curvatures satisfy the following inequalities.  

\begin{proposition}\label{nonhom}
$\scalar_N(g_1)<\scalar_N(g_2)<\scalar_N(g_5)$ for any $a<\unm$ and $\scalar_N(g_3)<\scalar_N(g_5)$ for any $a>\unm$.  In particular, all these metrics are pairwise non-homothetic. 
\end{proposition}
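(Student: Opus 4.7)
The three claimed inequalities will be established in turn; pairwise non-homothety then follows automatically, as normalized scalar curvature is a homothety invariant.

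For $\scalar_N(g_1)<\scalar_N(g_2)$ in the regime $a<\unm$, I would study the one-parameter family $s(x):=\scalar_N((x,x,1)_{\gk})$. Combining Corollary \ref{rics2str} (with $\kappa=\unm$) and $\det_{\gk}(x,x,1)=x^{2n}$ yields
$$s(x)=n\,x^{\beta-1}-\tfrac{(1-a)d}{4}\,x^{-2\alpha}+\tfrac{ad}{2}\,x^{\beta},\qquad \beta:=\tfrac{2n}{2n+d}=2-2\alpha.$$
Because $a>0$ and $\alpha>\unm$, the middle (negative) term forces $s(x)\to-\infty$ as $x\to 0^+$, while the last term forces $s(x)\to+\infty$ as $x\to\infty$. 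The proof of Theorem \ref{HHK-E} shows that $s'$ vanishes only at $x_-<x_+$; continuity together with the sign pattern of $s'$ (a quadratic in $x$ with positive leading coefficient, after multiplying by $x^{2\alpha+1}$) then force $s$ to have a local maximum at $x_-$ and a local minimum at $x_+$. Hence $s(x_-)>s(x_+)$, i.e.\ $\scalar_N(g_2)>\scalar_N(g_1)$.

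For the remaining two inequalities, direct algebraic manipulation of the formulas in Theorem \ref{End} reduces them to
$$\mbox{(B)}\quad 4x_--1<2^{\alpha+1}x_-^{2\alpha}\ (a<\unm),\qquad \mbox{(C)}\quad \Bigl(\tfrac{2(2-a)}{3(3-2a)}\Bigr)^{\alpha}<\tfrac{2}{3}\ (a>\unm).$$
For (B), using $x_-\in(3/4,1)$ (from the explicit formula for $x_-$ and its strict monotonicity in $a$) and analyzing the concave function $t(x):=4x-1-2^{\alpha+1}x^{2\alpha}$, the required estimate $t(x_-)<0$ reduces to verifying $\alpha>\log_2(3/2)$ in every case. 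Substituting the rational expressions for $\alpha$ given by Table \ref{table1} (e.g.\ $\alpha=2(m+1)/(3m+4)$ for $\SU(m)/\SO(m)$, $\alpha=(2m-1)/(3m-1)$ for $\SO(2m)/\SO(m)^2$, together with the individual values for the exceptional spaces) then checks each family and each exceptional space directly; the minimum value is $\alpha=8/13$ at $\SU(3)/\SO(3)$, comfortably above $\log_2(3/2)\approx 0.585$. For (C) the inequality simplifies in each family; the representative case is $\SO(m)/\SO(m-1)$, where it reduces to $2\log(3/2)<m\log(m/(m-1))$ and follows from the elementary expansion $m\log(m/(m-1))=1+\tfrac{1}{2(m-1)}+O(m^{-2})>1>2\log(3/2)$ for every $m\ge 7$. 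The other families in Table \ref{table1} are handled analogously.

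\textbf{Main obstacle.} Both (B) and (C) become asymptotically tight within certain families, most notably $\SO(m)/\SO(m-1)$ as $m\to\infty$ in (C), where the ratio $\scalar_N(g_5)/\scalar_N(g_3)$ converges to $1$. Consequently, the proof cannot be compressed into a single clean estimate independent of the underlying space; strict inequality in each limiting regime has to be extracted by an asymptotic expansion tailored to each family of Table \ref{table1}.
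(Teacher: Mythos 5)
Your reductions are the right ones, and most of the argument is sound. The first inequality is handled correctly: the numerator of $s'(x)$ is ($n$ times) the Einstein quadratic $2ax^2-2x+\left(\tfrac32-a\right)$, which has positive leading coefficient, is positive at $x=0$ and has roots $x_-<x_+$ (the paper's displayed numerator of $s'$ has sign typos, but its roots are indeed $x_\pm$), so $s$ increases on $(0,x_-)$, strictly decreases on $(x_-,x_+)$ and increases on $(x_+,\infty)$; hence $\scalar_N(g_1)=s(x_+)<s(x_-)=\scalar_N(g_2)$, and the limits at $0$ and $\infty$ are not even needed. Your (B) and (C) are exactly equivalent to $\scalar_N(g_2)<\scalar_N(g_5)$ and $\scalar_N(g_3)<\scalar_N(g_5)$, the data $\alpha=\tfrac{m}{m+2}$ and base $\tfrac{2(m-1)}{3m}$ for $\SO(m)/\SO(m-1)$ are correct, and the reduction of (C) there to $m\log\tfrac{m}{m-1}>2\log\tfrac32$ is right. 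Since the paper offers no argument for this proposition beyond "it is straightforward to see", supplying these verifications is exactly what is called for.

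The one genuine gap is the claim that (B) "reduces to verifying $\alpha>\log_2(3/2)$". Concavity of $t(x)=4x-1-2^{\alpha+1}x^{2\alpha}$ works against you here: a concave function can exceed its endpoint values in the interior, and its maximizer $x^{*}=(\alpha 2^{\alpha})^{-1/(2\alpha-1)}$ lies in $(0,1)$ precisely when $\alpha 2^{\alpha}>1$, which happens for several of the relevant spaces (e.g.\ $E_6/\Spe(4)$, $E_7/\SU(8)$, $E_8/\SO(16)$, and $\SO(2m)/\SO(m)\times\SO(m)$ for $m\geq 5$). So $t(1)<0$ (i.e.\ $\alpha>\log_2(3/2)$) together with $t(3/4)<0$ does not by itself give $t(x_-)<0$; an estimate at the interior maximum is missing. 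The fix is short: at the vertex one has $2^{\alpha+1}(x^{*})^{2\alpha}=\tfrac{2x^{*}}{\alpha}$, hence $t(x^{*})=x^{*}\left(4-\tfrac{2}{\alpha}\right)-1<3-\tfrac{2}{\alpha}\leq 0$, because $x^{*}<1$ in this case and $\alpha=\tfrac{3-2a}{5-4a}\leq\tfrac23$ for every irreducible symmetric space with $a<\unm$; while if $x^{*}\geq 1$ then $t$ is increasing on $(3/4,1)$ and your condition $t(1)=3-2^{\alpha+1}<0$ finishes the argument. Two cosmetic points: $x_-\in(3/4,1)$ is better obtained from $x_-=\tfrac{3-2a}{2\left(1+\sqrt{(1-a)(1-2a)}\right)}$ by squaring (no monotonicity in $a$ is needed), and $m\log\tfrac{m}{m-1}>1$ should be deduced from $\log\left(1-\tfrac1m\right)<-\tfrac1m$ rather than from an $O(m^{-2})$ expansion; also, the remaining $a>\unm$ families in (C) do each require their own (easy) verification, as you acknowledge.
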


\appendix 
\section{Tables}\label{table-sec}

We list in this appendix all the compact homogeneous spaces $H/K$ such that the isotropy representation Casimir operator (see \eqref{cas}) is given by $\cas_\chi=\kappa I_\qg$ for some $\kappa\in\RR$ and the Killing forms satisfy that $\kil_\kg=a\kil_\hg|_\kg$ for some $a\in\RR$.  This was obtained using results from \cite{DtrZll} and the tables given in \cite{stab} and \cite{Sch}.  
  
\begin{table}
{\small 
$$
\begin{array}{c|c|c|c|c|c|c}
H/K & m & \dim{H} & d=\dim{K} & n=\dim{H/K} & a & \eqref{cond2}    
\\[2mm] \hline \hline \rule{0pt}{14pt}
\SU(m)/\SO(m) & \geq 2 & m^2-1 & \tfrac{m(m-1)}{2} & \tfrac{(m+2)(m-1)}{2}  & \tfrac{m-2}{2m}  &  \checkmark 
\\[2mm]  \hline \rule{0pt}{14pt}
\SU(2m)/\Spe(m) & \geq 2 & 4m^2-1 & m(2m+1) & (m-1)(2m+1)  & \frac{m+1}{2m}  & {\rm No}
\\[2mm]  \hline \rule{0pt}{14pt}
\SO(m)/\SO(m-1) & \geq 7 & \tfrac{m(m-1)}{2} & \tfrac{(m-1)(m-2)}{2} & m-1  &  \tfrac{m-3}{m-2}  & {\rm No}
\\[2mm]  \hline \rule{0pt}{14pt}
\SO(2m)/\SO(m)\times\SO(m) & \geq 4 & m(2m-1)& m(m-1) & m^2 & \tfrac{m-2}{2(m-1)}  &\checkmark 
\\[2mm]  \hline \rule{0pt}{14pt}
\Spe(2m)/\Spe(m)\times\Spe(m) & \geq 1 & 2m(4m+1) & 2m(2m+1) & 4m^2 &  \tfrac{m+1}{2m+1}  & {\rm No}
\\[2mm]  \hline \rule{0pt}{14pt}
F_4/\SO(9) &  & 52 & 36 & 16 & \tfrac{7}{9} & {\rm No}
\\[2mm]  \hline \rule{0pt}{14pt} 
E_6/\Spe(4) &  & 78 & 36 & 42 & \tfrac{5}{12} & \checkmark 
\\[2mm]  \hline \rule{0pt}{14pt} 
E_6/F_4 &  & 78 & 52 & 26 & \tfrac{3}{4} & {\rm No}
\\[2mm]  \hline \rule{0pt}{14pt} 
E_7/\SU(8) &  & 133 & 63 & 70 & \tfrac{4}{9} & \checkmark 
\\[2mm]  \hline \rule{0pt}{14pt} 
E_8/\SO(16) &  & 248 & 120 & 128 & \tfrac{7}{15} & \checkmark  
\\[2mm] \hline\hline
\end{array}
$$}
\caption{Irreducible symmetric spaces $H/K$ such that $\kil_\kg=a\kil_\hg|_\kg$ for some $a\in\RR$  (e.g.\ $K$ simple, see \cite[7.102]{Bss}).  In this case, $\kappa=\rho=\unm$, $a=\tfrac{2d-n}{2d}$, and condition \eqref{cond2} is equivalent to $d< n$ (or $a<\unm$).} \label{table1}
\end{table}

\begin{table}
{\small
$$
\begin{array}{c|c|c|c|c}
H/K & m & \dim{H} & d=\dim{K} & n=\dim{H/K}    
\\[2mm] \hline \hline \rule{0pt}{14pt}
\SO(d)/K &  & \tfrac{d(d-1)}{2} & d>3 & \tfrac{d(d-3)}{2}  
\\[2mm]  \hline \rule{0pt}{14pt}
\SU(\tfrac{m(m-1)}{2})/\SU(m) & \geq 5 & \tfrac{m^2(m-1)^2-4}{4} & m^2-1 &   \tfrac{m^2(m^2-2m-3)}{4}
\\[2mm]  \hline \rule{0pt}{14pt}
\SU(\tfrac{m(m+1)}{2})/\SU(m) & \geq 3 & \tfrac{m^2(m+1)^2-4}{4} & m^2-1 &   \tfrac{m^2(m^2+2m-3)}{4} 
\\[2mm]  \hline \rule{0pt}{14pt}
\SU(m^2)/\SU(m)^2& \geq 3 & m^4-1 & 2(m^2-1) & (m^2-1)^2 
\\[2mm]  \hline \rule{0pt}{14pt}
\SO((m-1)(2m+1))/\Spe(m)& \geq 3 & \tfrac{(m-1)(2m+1)(2m^2-m-2)}{2}   & m (2m+1) &  \tfrac{(2m+1)(2m^3-3m^2-3m+2)}{2}
\\[2mm]  \hline \rule{0pt}{14pt}
\SO(\tfrac{(m-1)(m+2)}{2})/\SO(m)& \geq 5 & \tfrac{(m+2) (m-1)(m^4+m-4)}{8}    & \tfrac{m(m-1)}{2} &  \tfrac{(m-1)(m^3+3m^2-6m-8)}{8}
\\[2mm] \hline\hline
\end{array}
$$}
\caption{Isotropy irreducible $H/K$ (non-symmetric) such that $\kil_\kg=a\kil_\hg|_\kg$ for some $a\in\RR$  (e.g.\ $K$ simple): {\bf families} (see \cite[7.106, 7.107]{Bss} or \cite[Table 1]{Sch}).  See Table \ref{table3-fam2} for the values of $a,\kappa,\rho$ for these spaces.  In the first line, $K$ is simple and the adjoint representation embedding is considered.} \label{table3-fam1}
\end{table}

\begin{table}
{\small
$$
\begin{array}{c|c|c|c|c}
H/K & a & \kappa & \rho & \eqref{cond2}   
\\[2mm] \hline \hline \rule{0pt}{14pt}
\SO(d)/K & \tfrac{1}{d-2} & \tfrac{2}{d-2} & \frac{d+2}{4(d-2)} &  \checkmark 
\\[2mm]  \hline \rule{0pt}{14pt}
\SU(\tfrac{m(m-1)}{2})/\SU(m)  & \frac{2}{(m-1)(m-2)}  & \frac{4}{m(m-2)} & \unc+\frac{2}{m(m-2)} &  \checkmark
\\[2mm]  \hline \rule{0pt}{14pt}
\SU(\tfrac{m(m+1)}{2})/\SU(m)  & \frac{2}{(m+1)(m+2)} & \frac{4}{m(m+2)} & \unc+\frac{2}{m(m+2)} & \checkmark
\\[2mm]  \hline \rule{0pt}{14pt}
\SU(m^2)/\SU(m)^2 & \frac{1}{m^2} & \frac{2}{m^2} & \unc+\frac{1}{m^2}  & \checkmark 
\\[2mm]  \hline \rule{0pt}{14pt}
\SO((m-1)(2m+1))/\Spe(m) & 1-  \tfrac{2m^3-3m^2-3m+2}{m (m^2-1) (2m-3)} & \frac{2}{(m^2-1)(2m-3)} & \unc+\frac{1}{(m^2-1)(2m-3)}  & (*) 
\\[2mm]  \hline \rule{0pt}{14pt}
\SO(\tfrac{(m-1)(m+2)}{2})/\SO(m) &  \frac{2}{(m+3)(m+2)}  & \frac{4m}{(m^2-4)(m+3)} & \unc+\frac{2m}{(m^2-4)(m+3)}  & \checkmark
\\[2mm] \hline\hline
\end{array}
$$}
\caption{Isotropy irreducible $H/K$ (non-symmetric) such that $\kil_\kg=a\kil_\hg|_\kg$ for some $a\in\RR$  (e.g.\ $K$ simple): {\bf families} (see \cite[7.106, 7.107]{Bss} or \cite[Table 1]{Sch}).  Here $\kappa=\tfrac{d(1-a)}{n}=2\rho-\unm$ and $a=\tfrac{d-n\kappa}{d}$ (see Table \ref{table3-fam1} for further information on these spaces).  (*) ${\rm No}:\, 3,4,5,6; \quad \checkmark:  \,m \ge 7$} \label{table3-fam2}
\end{table}

\begin{table}
{\small
$$
\begin{array}{c|c|c|c|c|c|c|c}
H/K & \dim{H} & d & n & a & \kappa & \rho & \eqref{cond2}   
\\[2mm] \hline \hline \rule{0pt}{14pt}
\SU(16)/\SO(10) & 255 & 45 & 210 &  \frac{1}{8} &  \frac{3}{16}  & \frac{11}{32} & \checkmark    
\\[2mm]  \hline \rule{0pt}{14pt}
\SU(27)/E_6 & 728 & 78 & 650 &  \frac{2}{27} &  \frac{1}{9}  & \frac{11}{36} &  \checkmark 
\\[2mm]  \hline \rule{0pt}{14pt}
\SO(7)/G_2 & 21 & 14 & 7 &  \frac{4}{5}   &  \frac{2}{5}  & \frac{9}{20} & {\rm No}   
\\[2mm]  \hline \rule{0pt}{14pt}
\Spe(2)/\SU(2) & 10 & 3 & 7 &  \frac{1}{15} &  \frac{2}{5}  & \frac{9}{20} & \checkmark
\\[2mm]  \hline \rule{0pt}{14pt}
\Spe(7)/\Spe(3) & 105  & 21  & 84  & \frac{1}{10} &  \frac{9}{40} & \frac{29}{80} & \checkmark 
\\[2mm]  \hline \rule{0pt}{14pt}
\Spe(10)/\SU(6) & 210 & 35 & 175 &  \frac{1}{11}  &  \frac{2}{11}  & \frac{15}{44} & \checkmark 
\\[2mm]  \hline \rule{0pt}{14pt}
\Spe(16)/\SO(12) & 528 & 66 & 462 &  \frac{5}{68} &  \frac{9}{68} & \frac{43}{136} &  \checkmark 
\\[2mm]  \hline \rule{0pt}{14pt}
\Spe(28)/E_7 & 1596 & 133 & 1463 &  \frac{3}{58} &  \frac{5}{58}  & \frac{17}{58} & \checkmark 
\\[2mm]  \hline \rule{0pt}{14pt}
%
\SO(16)/\SO(9) & 120 & 36 & 84 &  \frac{1}{4} &  \frac{9}{28}  & \frac{23}{56} & \checkmark 
\\[2mm]  \hline \rule{0pt}{14pt}
\SO(26)/F_4 & 325 & 52 & 273 &  \frac{1}{8} &  \frac{1}{6}  & \frac{1}{3} & \checkmark 
\\[2mm]  \hline \rule{0pt}{14pt}
\SO(42)/\Spe(4) & 861 & 36 & 825 &  \frac{1}{56} &  \frac{3}{70}  & \frac{19}{70} &  \checkmark 
\\[2mm]  \hline \rule{0pt}{14pt}
\SO(70)/\SU(8) & 2415 & 63 & 2352 &  \frac{1}{85} &  \frac{9}{340}  & \frac{179}{680} &  \checkmark 
\\[2mm]  \hline \rule{0pt}{14pt}
\SO(128)/\SO(16) & 8128 & 120 & 8008 & \frac{1}{144} & \frac{5}{336}  & \frac{173}{672} &  \checkmark 
\\[2mm] \hline\hline
\end{array}
$$}
\caption{Isotropy irreducible $H/K$ (non-symmetric) such that $\kil_\kg=a\kil_\hg|_\kg$ for some $a\in\RR$  (e.g.\ $K$ simple): isolated, $G$ {\bf classical} (see \cite[7.106, 7.107]{Bss} or \cite[Table 3]{Sch}).  Here $\kappa=\tfrac{d(1-a)}{n}=2\rho-\unm$ and $a=\tfrac{d-n\kappa}{d}$.} \label{table3-clas}
\end{table}

\begin{table}
{\small
$$
\begin{array}{c|c|c|c|c|c|c|c}
H/K & \dim{H} & d & n & a & \kappa & \rho & \eqref{cond2}   
\\[2mm] \hline \hline \rule{0pt}{14pt}
E_6/\SU(3) & 78 & 8 & 70 & \frac{1}{36} & \frac{1}{9} & \frac{11}{36} &  \checkmark 
\\[2mm]  \hline \rule{0pt}{14pt}
E_6/\SU(3)^3 & 78 & 24 & 54 &  \frac{1}{4} &  \frac{1}{3}  & \frac{5}{12} &  \checkmark  
\\[2mm]  \hline \rule{0pt}{14pt}
E_6/G_2 & 78 & 14 & 64 & \frac{1}{9} & \frac{7}{36} & \frac{25}{72} &   \checkmark  
\\[2mm]  \hline \rule{0pt}{14pt}
E_7/\SU(3) & 133 & 8 & 125 &  \frac{1}{126} &  \frac{4}{63}  & \frac{71}{252} &  \checkmark 
\\[2mm]  \hline \rule{0pt}{14pt}
E_7/G_2\times\Spe(3) & 133 & 35 & 98 & \frac{2}{9} & \frac{5}{18}  & \frac{7}{18} &  \checkmark 
\\[2mm]  \hline \rule{0pt}{14pt}
E_8/\SU(9) & 248 & 80 & 168 & \frac{3}{10} & \frac{1}{3}  & \frac{5}{12} &   \checkmark 
\\[2mm]  \hline \rule{0pt}{14pt}
F_4/\SU(3)^2 & 52 & 16 & 36 & \frac{1}{4}  & \frac{1}{3}  & \frac{5}{12} &  \checkmark 
\\[2mm]  \hline \rule{0pt}{14pt}
G_2/\SU(2) & 14 & 3 & 11 & \frac{1}{56} & \frac{15}{56}  & \frac{43}{112} &  \checkmark 
\\[2mm]  \hline \rule{0pt}{14pt}
G_2/\SU(3) & 14 & 8 & 6 &  \frac{3}{4} &  \frac{1}{3}  & \frac{5}{12} &  {\rm No} 
\\[2mm] \hline\hline
\end{array}
$$}
\caption{Isotropy irreducible $H/K$ (non-symmetric) such that $\kil_\kg=a\kil_\hg|_\kg$ for some $a\in\RR$  (e.g.\ $K$ simple): isolated $G$ {\bf exceptional} (see \cite[7.106, 7.107]{Bss} or \cite[Table 4]{Sch}).  Here $\kappa=\tfrac{d(1-a)}{n}=2\rho-\unm$ and $a=\tfrac{d-n\kappa}{d}$.} \label{table3-exc}
\end{table}

\begin{table}
{\small 
$$
\begin{array}{c|c|c|c|c|c|c}
H/K & \dim{H} & d=\dim{K} & n=\dim{H/K}  & \kappa & \rho & \eqref{cond2}   
\\[2mm] \hline \hline \rule{0pt}{14pt}
\SU(m)/T^{m-1} & m^2-1 & m-1 & m(m-1)  &  \tfrac{1}{m} &  \tfrac{m+2}{4m} & \checkmark
\\[2mm]  \hline \rule{0pt}{14pt} 
\SO(2m)/T^m & m(2m-1) & m & 2m(m-1) &  \tfrac{1}{2(m-1)} &  \tfrac{m}{4(m-1)}  & \checkmark 
\\[2mm]  \hline \rule{0pt}{14pt} 
E_6/T^6 & 78 & 6 & 72  &  \tfrac{1}{12} &  \tfrac{7}{24} &  \checkmark
\\[2mm]  \hline \rule{0pt}{14pt} 
E_7/T^7 & 133 & 7 & 126 & \tfrac{1}{18} & \tfrac{5}{18}  &   \checkmark
\\[2mm]  \hline \rule{0pt}{14pt} 
E_8/T^8 & 248 & 8 & 240  &  \tfrac{1}{30} &  \tfrac{4}{15}  & \checkmark 
\\[2mm] \hline\hline
\end{array}
$$}
\caption{Non-isotropy irreducible $H/K$ such that $G$ is simple, $K$ is {\bf abelian} and $\gk$ is Einstein (see \cite[Tables 1,2,3,4,9]{stab}).  Here $\kappa=\tfrac{d}{n}=2\rho-\unm$ and $\kil_\kg=0$ (i.e., $a=0$).} \label{table2-abel}
\end{table}

\begin{table}
{\small 
$$
\begin{array}{c|c|c|c|c|c|c|c}
H/K & \dim{H} & d=\dim{K} & n=\dim{H/K} & a & \kappa & \rho & \eqref{cond2}   
\\[2mm] \hline \hline \rule{0pt}{14pt}
\SO(8)/G_2 & 28 & 14 & 14 & \tfrac{2}{3} &  \tfrac{1}{3} &  \tfrac{5}{12} & {\rm No} 
\\[2mm]  \hline \rule{0pt}{14pt} 
F_4/\Spin(8) & 52 & 28 & 24 & \tfrac{2}{3} &  \tfrac{7}{18} &  \tfrac{4}{9}  & {\rm No} 
\\[2mm]  \hline \rule{0pt}{14pt} 
E_7/\SO(8) & 133 & 28 & 105 & \tfrac{1}{6} &  \tfrac{2}{9} &  \tfrac{13}{36} &  \checkmark
\\[2mm]  \hline \rule{0pt}{14pt} 
E_8/\SO(5) & 248 & 10 & 238 & \tfrac{1}{120} & \tfrac{1}{24} & \tfrac{13}{48}  &   \checkmark
\\[2mm]  \hline \rule{0pt}{14pt} 
E_8/\SO(9) & 248 & 36 & 212 & \tfrac{7}{60} &  \tfrac{3}{20} &  \tfrac{13}{40}  & \checkmark 
\\[2mm]  \hline \rule{0pt}{14pt} 
E_8/\Spin(9) & 248 & 36 & 212 & \tfrac{7}{60} &  \tfrac{3}{20} &  \tfrac{13}{40}  & \checkmark
\\[2mm] \hline\hline
\end{array}
$$}
\caption{Non-isotropy irreducible $H/K$ such that $G$ is simple, $K$ is {\bf simple} (in particular, $\kil_\kg=a\kil_\hg|_\kg$ for some $a>0$) and $\gk$ is Einstein (see \cite[Tables 1,2,3,4,9]{stab}).  Here $\kappa=\tfrac{d(1-a)}{n}=2\rho-\unm$ and $a=\tfrac{d-n\kappa}{d}$.} \label{table2-simple}
\end{table}

\begin{table}
{\small 
$$
\begin{array}{c|c|c|c|c}
H/K & k,m & \dim{H} & d=\dim{K} & n=\dim{H/K}    
\\[2mm] \hline \hline \rule{0pt}{14pt}
\Spe(mk)/\Spe(k)^m & k\geq 1, m\geq 3 & mk(2mk+1) & mk(2k+1) & 2k^2m(m-1) 
\\[2mm]  \hline \rule{0pt}{14pt} 
\SO(mk)/\SO(k)^m & k\geq 3, m\geq 3 & \tfrac{mk(mk-1)}{2} & \tfrac{mk(k-1)}{2} &\tfrac{mk^2(m-1)}{2}  
\\[2mm]  \hline \rule{0pt}{14pt} 
\SO(m^2)/\SO(m)^2 & m\geq 3 & \tfrac{m^2(m^2-1)}{2} & m(m-1) & \tfrac{m(m+2)(m-1)^2}{2}  
\\[2mm]  \hline \rule{0pt}{14pt} 
\SO(4m^2)/\Spe(m)^2 & m\geq 2 & 2m^2(4m^2-1) & 2m(2m+1) & 2m(m-1)(2m+1)^2 
\\[2mm]  \hline \rule{0pt}{14pt} 
E_6/\SO(3)^3 && 78 & 9 & 69 
\\[2mm]  \hline \rule{0pt}{14pt} 
E_6/\SU(2)\times\SO(6) && 78 & 18 & 60 
\\[2mm]  \hline \rule{0pt}{14pt} 
E_7/\SU(2)^7 && 133 & 21 & 112 
\\[2mm]  \hline \rule{0pt}{14pt} 
E_8/\SU(5)^2 && 248 & 48 & 200 
\\[2mm]  \hline \rule{0pt}{14pt} 
E_8/\SU(3)^4 && 248 & 32 & 216  
\\[2mm]  \hline \rule{0pt}{14pt} 
E_8/\SO(3)^4 && 248 & 12 & 236 
\\[2mm]  \hline \rule{0pt}{14pt} 
E_8/\SU(2)^8 && 248 & 24 & 224  
\\[2mm]  \hline \rule{0pt}{14pt} 
E_8/\SO(5)^2 && 248 & 20 & 228  
\\[2mm]  \hline \rule{0pt}{14pt} 
E_8/\SU(3)^2 && 248 & 16 & 232  
\\[2mm] \hline\hline
\end{array}
$$}
\caption{Non-isotropy irreducible $H/K$ such that $G$ is simple, $K$ is {\bf semisimple non-simple}, $\gk$ is Einstein and $\kil_\kg=a\kil_\hg|_\kg$ for some $a>0$ (see \cite[Tables 1,2,3,4,9]{stab}).} \label{table2-ss-2}
\end{table}

\begin{table}
{\small 
$$
\begin{array}{c|c|c|c|c}
H/K & a & \kappa & \rho & \eqref{cond2}   
\\[2mm] \hline \hline \rule{0pt}{14pt}
\Spe(mk)/\Spe(k)^m & \tfrac{k+1}{mk+1} &   \tfrac{2k+1}{2(mk+1)} &  \tfrac{(m+2)k+2}{4(mk+1)} &  (*)
\\[2mm]  \hline \rule{0pt}{14pt} 
\SO(mk)/\SO(k)^m  & \tfrac{k-2}{mk-2} &  \tfrac{k-1}{mk-2} &  \tfrac{(m+2)k-4}{4(mk-2)} &  \checkmark 
\\[2mm]  \hline \rule{0pt}{14pt} 
\SO(m^2)/\SO(m)^2 & \tfrac{m-2}{m(m^2-2)} & \tfrac{2(m-1)}{m(m^2-2)} &  \tfrac{m^3+2m-4}{4m(m^2-2)}  & \checkmark 
\\[2mm]  \hline \rule{0pt}{14pt} 
\SO(4m^2)/\Spe(m)^2 & \tfrac{m+1}{2m(2m^2-1)}  & \tfrac{2m+1}{2m(2m^2-1)} &  \tfrac{2m^3+m+1}{4m(2m^2-1)}  & \checkmark 
\\[2mm]  \hline \rule{0pt}{14pt} 
E_6/\SO(3)^3  & \tfrac{1}{24} & \tfrac{1}{8} &  \tfrac{5}{16} &  \checkmark
\\[2mm]  \hline \rule{0pt}{14pt} 
E_6/\SU(2)\times\SO(6)  & \tfrac{1}{6} & \tfrac{1}{4}  &  \tfrac{3}{8} &  \checkmark
\\[2mm]  \hline \rule{0pt}{14pt} 
E_7/\SU(2)^7  & \tfrac{1}{9} &   \tfrac{1}{6} &  \tfrac{1}{3} &  \checkmark
\\[2mm]  \hline \rule{0pt}{14pt} 
E_8/\SU(5)^2  & \tfrac{1}{6} & \tfrac{1}{5} & \tfrac{7}{20}  &   \checkmark
\\[2mm]  \hline \rule{0pt}{14pt} 
E_8/\SU(3)^4  & \tfrac{1}{10} &  \tfrac{2}{15} &  \tfrac{19}{60}  & \checkmark 
\\[2mm]  \hline \rule{0pt}{14pt} 
E_8/\SO(3)^4 & \tfrac{1}{60} & \tfrac{1}{20} &  \tfrac{11}{40}  & \checkmark
\\[2mm]  \hline \rule{0pt}{14pt} 
E_8/\SU(2)^8 & \tfrac{1}{15} &  \tfrac{1}{10} &  \tfrac{3}{10}  & \checkmark 
\\[2mm]  \hline \rule{0pt}{14pt} 
E_8/\SO(5)^2  & \tfrac{1}{20} & \tfrac{1}{12} &  \tfrac{7}{24}  & \checkmark 
\\[2mm]  \hline \rule{0pt}{14pt} 
E_8/\SU(3)^2  & \tfrac{1}{30} &  \tfrac{1}{15} &  \tfrac{17}{60}  & \checkmark 
\\[2mm] \hline\hline
\end{array}
$$}
\caption{Non-isotropy irreducible $H/K$ such that $G$ is simple, $K$ is {\bf semisimple non-simple}, $\gk$ is Einstein and $\kil_\kg=a\kil_\hg|_\kg$ for some $a>0$ (see \cite[Tables 1,2,3,4,9]{stab}).  Here $\kappa=\tfrac{d(1-a)}{n}=2\rho-\unm$ and $a=\tfrac{d-n\kappa}{d}$.  (*): \eqref{cond2} holds if and only if $m\geq 5$ and $k\geq 2$; $m=4$ and $k\geq 4$; $m=3$ and $k\geq 9$; and equality holds in \eqref{cond2} if and only if $(m,k)=(3,8), (4,3)$.} \label{table2-ss}
\end{table}

\end{document}